\newtheorem{theorem}{Theorem}[section]
\newtheorem{proposition}[theorem]{Proposition}
\newtheorem{lemma}[theorem]{Lemma}
\newtheorem*{theorem*}{Theorem}
\theoremstyle{definition}
\theoremstyle{remark}
\newtheorem{rem}[theorem]{Remark}
\renewenvironment{proof}[1][Proof]{\noindent\textit{#1} }{\hfill 
	\rule{0.5em}{0.5em}}
\numberwithin{equation}{section}
\newcommand{\cA}{\mathcal{A}}
\newcommand{\cD}{\mathcal{D}}
\newcommand{\cL}{\mathcal{L}}
\newcommand{\cE}{\mathcal{E}}
\newcommand{\cF}{\mathcal{F}}
\newcommand{\cH}{\mathcal{H}}
\newcommand{\bN}{\mathbb{N}}
\newcommand{\bH}{\mathbb{H}}
\newcommand{\bR}{\mathbb{R}}
\newcommand{\bP}{\mathbb{P}}
\newcommand{\Rd}{{\mathbb{R}^d}}
\newcommand{\E}{\mathbb E}
\newcommand{\Pp}{\mathbb P}
\newcommand{\1}{\mathbbm 1}
\newcommand{\eps}{\varepsilon}
\newcommand{\I}{{\rm I}}
\newcommand{\II}{{\rm II}} 
\newcommand{\III}{{\rm III}}
\def\nn{\nonumber}
\newcommand\FF{\mathcal {F}}
\newcommand{\lC}{{\underline{c}}}
\newcommand{\uC}{{\overline{C}}}
\newcommand{\la}{{\underline{\alpha}}}
\newcommand{\ua}{{\overline{\alpha}}}
\newcommand{\wh}[1]{{\widehat{#1}}}
\newcommand{\wt}[1]{{\widetilde{#1}}}
\def\ol{\overline}
\author{Kyung-Youn Kim}
\email{kykim@nchu.edu.tw}
\address{Department of Applied Mathematics, National Chung Hsing University, Taichung 402, Taiwan}
\author{Lidan Wang}
\email{lidanw.math@gmail.com}
\address{School of Statistics and Data Science, Nankai University, Tianjin, China}
\title[Dirichlet heat kernel bounds for systems of nonlocal equations]
{Dirichlet Heat kernel estimates for a large class of anisotropic Markov processes}
\thanks{KY's Research  partially supported by NSTC of Taiwan (Grant No. NSTC-110-2115-M-005-009-MY3) and	
	LW's Research partially supported by NNSF of China (Grant No. 11801283).} 
\subjclass[2020]{Primary 31B25, 60J50; Secondary 60J76, 60J35.}
\keywords{Markov jump process, Dirichlet heat kernel, first exit time, transition density,  anisotropic process, Green function.}
\begin{document}

	\begin{abstract}
	Let $Z=(Z^{1}, \ldots, Z^{d})$ be the d-dimensional L\'evy {process} where {$Z^i$'s} are  independent 1-dimensional L\'evy {processes} with  identical jumping kernel $ \nu^1(r) =r^{-1}\phi(r)^{-1}$. Here $\phi$ is {an} increasing function with weakly scaling condition of order $\underline \alpha, \overline \alpha\in (0, 2)$. 
We consider a symmetric function $J(x,y)$ comparable to 
\begin{align*}
	\begin{cases}
		\nu^1(|x^i- y^i|)\qquad&\text{ if  $x^i \ne y^i$ for some $i$ and $x^j = y^j$ for all $j \ne i$}\\
		0\qquad&\text{ if $x^i \ne y^i$ for more than one index $i$}.
	\end{cases}
\end{align*}
Corresponding to the jumping kernel  $J$, there exists an anisotropic Markov process $X$, see \cite{KW22}. In this article, we establish sharp two-sided Dirichlet heat kernel  estimates for $X$ in $C^{1,1}$ open set, under certain regularity conditions.
As an application of the main results, we derive the Green function estimates.
\end{abstract}
\maketitle
	
\setstretch{1.1}
\section{Introduction}

It is a well-known fact that the fundamental solution $p(t, x, y)$ of the heat equation with  a second order elliptic differential operator on $\bR^d$ is the transition density of the diffusion process related to the operator as an infinitesimal generator.
The relation is also true for a large class of Markov processes corresponding to non-local operators with discontinuous sample paths. Correspondingly, there are lots of studies on the sharp two-sided heat kernel estimates for  isotropic Markov processes (see, \cite{ChKu03, ChKu08, CKK11} and other references therein).

\medskip
For any open set $D\subset \bR^d$, one can define a killed processes upon leaving $D$ and there exists a transition density $p_D(t, x, y)$ if the distribution is absolutely continuous.
The function $p_D(t, x, y)$ is also called the Dirichlet heat kernel which describes an operator with zero exterior conditions. The  Green function and solutions to Cauchy and Poisson problems with Dirichlet conditions can be expressed according to the Dirichlet heat kernel as well.
However, it is difficult to study the Dirichlet heat kernel estimates for $p_D(t, x, y)$, especially when the process is close to the boundary.
Therefore, the Dirichlet heat kernel estimates were obtained relatively recently for the Laplacian  operator (corresponding to the Brownian motion) in \cite{Da87, H92} for the upper bound, and in \cite{Zh02} for the lower bound.
In \cite{CKS12},  the authors estimated the Dirichlet heat kernel for the fractional Laplacian operator (corresponding to the isotropic stable L\'evy process) in $C^{1,1}$ open set over finite time intervals, and in bounded $C^{1,1}$ open set for a large time.
The elegant techniques developed in \cite{CKS12} provide the framework on the Dirichlet heat kernel estimates for non-local operators. With successful application and extension of the framework, there are lots of results on  discontinuous Markov processes: isotropic L\'evy processes in \cite{BGR14_D, CKS14, CKK21} ,  symmetric L\'evy processes with Gaussian components in \cite{CKS16}  and symmetric Markov process (beyond the L\'evy process) in \cite{KK, GKK20}.
Also, such non-local operators can be used to describe models in diverse phenomena in the real world, see, \cite{CT04, CV10, LS10, HL12}.

\medskip
For the last two decades, there have been  increasing interests in the anisotropic Markov processes. 
Consider the system of stochastic differential equations
$$dZ_t^i=\sum_{j=1}^d A_{ij}(Z_{t-})dL_t^j, \qquad i=1,\ldots, d.$$
where $L^i$ are identically distributed one-dimensional $\alpha$-stable process  with $\alpha\in(0,2)$.
In \cite{BaCh06}, under mild conditions on the matrix $A=(A_{ij})$, the authors observed the unique weak solution to this system which forms a strong Markov process.
In \cite{KuRy18, KuRy20}, they considered more specific conditions for $A$ and $\alpha$ to study the corresponding operators, and in \cite{Cha19, Cha20, KRS21}, they considered $L^i$ which are not necessarily same processes.  It is worth to mention that there are fundamental differences between isotropic and anisotropic processes. For example, it is shown in \cite{BaCh10} that Harnack inequality does not hold for cylindrical $\alpha$-stable processes. Intuitively, the significant difference lies in the fact that isotropic Markov processes can move (or jump) in any direction uniformly, while anisotropic Markov processes can only jump along one of the coordinate directions.

\medskip
As for heat kernel estimates, they were firstly obtained (not in a sharp form) in \cite{Xu13} for an anisotropic Markov process whose jumping measure is comparable to that of cylindrical $\alpha$-stable processes. With delicate analytic methods, sharp two-sided heat kernel estimates were established  in 
\cite{KKK22}. The results have been furtherly extended to more general anisotropic Markov processes in \cite{KW22}. Estimates of Dirichlet heat kernels for anisotropic Markov processes is a completely new challenge. Very recently, in \cite{CHZ}, two-sided Dirichlet heat kernel estimates are studied when $L^i$ are $\alpha$-stable processes.
In this article, we consider more general anisotropic Markov processes introduced in \cite{KW22} and establish the two-sided Dirichlet heat kernel estimates in $C^{1,1}$ domains.

\medskip
For any $0<\la\le \ua<2$, let $\phi:[0, \infty)\rightarrow[0, \infty)$ be an increasing function with the following condition:
there exist positive constants $\lC\le 1$ and $\uC\ge 1$ such that 
\begin{align*}
	{\bf(WS):}
	\qquad\qquad	
	\lC\left(\frac{R}{r}\right)^{\la}\le \frac{\phi(R)}{\phi(r)}\le \uC \left(\frac{R}{r}\right)^{\ua} \qquad \mbox{ for }\,\, 0<r\, \le  R.
\end{align*}
Using this $\phi$,  define
\begin{align}\label{d:nu1}
	{\nu^1}(r):={(r \phi(r))}^{-1}\qquad\mbox{ for}\,\, r>0.
\end{align}
Then ({\bf WS}) implies 
$$
\int_\bR(1\wedge |r|^2){\nu^1}(|r|)dr\le c \left(\int_0^{1}r^{-\ua+1} d r+ \int_{1}^{\infty} r^{-\la-1} \right)d r<\infty
$$
for some $c>0$,
so ${\nu^1}(dr):={\nu^1}(|r|)dr$ is a L\'{e}vy measure.
Consider {an anisotropic} L\'evy process $Z$ in $\bR^d$ defined by $Z =(Z^{1}, \ldots, Z^{d})$, where each coordinate process $Z^{i}$ is an independent one-dimensional symmetric L\'evy process with L\'evy measure ${\nu^1}(dr)$.
Then the corresponding L\'evy measure $\nu$ of $Z$ is represented as
\begin{align*}
	\nu(d h) = \sum\limits_{i=1}^d {\nu^1}(|h^i|) d h^i \prod\limits_{j \ne i} \delta_{\{0\}}(d h^j).
\end{align*}
Intuitively, $\nu$ only measures  the sets containing the line which is parallel to one of the coordinate axes.
The corresponding Dirichlet form $(\mathcal{E}^\phi,  \mathcal F^\phi)$ on $L^2(\bR^d)$ is given by 
\begin{align*}
	\mathcal{E}^\phi (u,v) &=\int_{\Rd}\Big(\sum_{i=1}^d\int_\bR\big(u(x+e_i \tau) - u(x)\big)\big(v(x+e_i \tau) -  v(x)\big) J^\phi (x, x+e_i \tau) d \tau \Big)d x,\notag \\
	\mathcal F^\phi &=\{u\in L^2(\Rd)| \mathcal{E}^\phi (u,u)<\infty\},
\end{align*}
where $e_i$ is the unit vector in the positive $x^i$-direction and $J^\phi(x,y)$ is the jumping kernel defined as follows:
$$J^\phi (x, y):=
\begin{cases}
|x^i-y^i|^{-1}\phi(|x^i-y^i|)^{-1} \qquad&
\text {if $x^i \ne y^i$ for some $i$; $x^j = y^j$ for all $j \ne i$,}\\
0\,&\text{otherwise}.
\end{cases}
$$
We also consider a symmetric measurable function
$\kappa:\Rd\times\Rd\to (0,\infty)$ satisfying that
there exists  $\kappa_0>1$ such that 
\begin{align}\label{a:kappa}
	\kappa_0^{-1}\leq \kappa(x,y)\leq \kappa_0\qquad\text{ for }
	x,\,y\in\Rd.
\end{align}
Let $J:\Rd\times\Rd \setminus \operatorname{diag}\to (0,\infty)$ be a symmetric measurable function defined as
\begin{align}\label{a:J}
	J(x, y): =  \kappa(x,y)   J^\phi (x, y).
\end{align}	
By ({\bf WS}), there exists $c>0$ such that for any constant $a>0$ and any $i\in\{1,\dots,d\}$, we have
\begin{align}\label{e:large}
	\int_{\{|\tau|>a\}}\hspace{-.2in} J(x, x+e_i\tau)d\tau< \frac{c}{\phi(a)}\  \text{ for }x\in \Rd\quad  \text{ and }\quad 
	\int_{\{|\tau|>a\}}\hspace{-.2in} \nu^1(|\tau|)d\tau<\frac{c}{\phi(a)}.
\end{align}
Consider the Dirichlet form $(\mathcal{E}, \FF)$ on $L^2(\Rd)$ as follows:
\begin{align*}
	\mathcal{E} (u,v)&
	:=\int_{\Rd}\Big(\sum_{i=1}^d\int_\bR\big(u(x+e_i \tau) - u(x)\big)\big(v(x+e_i \tau) -  v(x)\big) J (x,x+e_i \tau) d \tau \Big) d x \,,\nn\\
	\mathcal F&:=\{u\in L^2(\Rd)| \; \mathcal{E}(u,u)<\infty\}.
\end{align*}
Then we obtain in  \cite[Theorem 1.1]{KW22} that the existence of a conservative Feller process  $X=(X^1,\dots,X^d)$ associated with $(\cE,\cF)$.  Moreover,$X$ has a jointly continuous transition density function $p(t,x,y)$ on $\bR_+\times\bR^d\times\bR^d$, which enjoys the following estimates: there {exists a constant} $C_{1}> 1$ such that for any $t>0, x,y\in\bR^d$, 
\begin{align}\label{eq:hke}
	{C_{1}^{-1}}[\phi^{-1}(t)]^{-d}\prod_{i=1}^d&\left(1\wedge \frac{t\phi^{-1}(t)}{|x^i-y^i|\phi(|x^i-y^i|)}\right)\nn\\
	&
	\le p(t,x,y)\le {C_{1}}	[\phi^{-1}(t)]^{-d}\prod_{i=1}^d\left(1\wedge \frac{t\phi^{-1}(t)}{|x^i-y^i|\phi(|x^i-y^i|)}\right),
\end{align} 
where $a\wedge b:=\min \{a, b\}$. $X$ represents a large class of anisotropic Markov processes. More detailed examples can be found in \cite[Remark 1.2]{KW22}.

\medskip
For any open set $D\subset \bR^d$, we define the {\it first exit time} $\tau_D:= \inf\{t>0: X_t\notin D\}$ of $D$ for the process $X$,
and consider the subprocess  $X^D$ of $X$ killed upon leaving $D$ as follows:
$X^D_t(\omega)=X_t(\omega)$ if $t<\tau_D(\omega)$ and 
$X^D_t(\omega)=\partial$ if $t\ge \tau_D(\omega)$ where $\partial$ is a cemetery point.
In this article, we investigate that $X^D$ has a continuous transition density $p_D(t, x, y)$ with respect to the Lebesgue measure, then obtain the two-sided bounds for $p_D(t, x, y)$ in $C^{1, 1}$ open set $D\subset \bR^d$ for different ranges of time $t$.
\medskip

We call $D\subset \bR^d$  a $C^{1,1}$ open set if there exists a localization radius $R>0 $ and a constant $\Lambda>0$ such that for every $z\in\partial D$ there exists a $C^{1,1}$ function $\varphi=\varphi_z: \bR^{d-1}\to \bR$ satisfying 
\begin{align*}
	\varphi(0)=0,\ \nabla\varphi (0)=(0, \dots, 0), \ \| \nabla\varphi \|_\infty \leq \Lambda\ \text{ and }\ | \nabla \varphi(x)-\nabla \varphi(w)| \leq \Lambda |x-w|
\end{align*}and an orthonormal coordinate system $CS_z$ of  $z=(z^1, \cdots, z^{d-1}, z^d):=(\widetilde  z, \, z^d)$ with  an origin at $z$ such that $ D\cap B(z, R )= \{y=({\wt y}, y^d) \in B(0, R) \mbox{ in } CS_z: y^d > \varphi (\widetilde  y) \}$. 
The pair $( R, \Lambda)$ is called the $C^{1,1}$ characteristics of the open set $D$, and we may assume that $R<1$ and {$\Lambda>2$}. 
\medskip

Note that a $C^{1,1}$ open set $D$ with characteristics $(R, \Lambda)$ can be unbounded and disconnected, and the distance between two distinct components of $D$ is at least $R$.
$C^{1,1}$ open sets  in $\bR$ with a characteristic $R>0$ can be written as the union of disjoint intervals so that the infimum of the lengths of all these intervals is at least $R$.
It is well known that if $D$ is $C^{1,1}$ open set with the characteristics $(R, \Lambda)$, then $D$ satisfies the interior and exterior ball conditions with the characteristic $R_1\le R$, that is, there exist balls $B_1,B_2\subset \Rd$ with radius $R_1$ such that $B_1\subset D\subset B_2^c$ satisfying  $\delta_{B_1}(x)\le \delta_{D}(x)\le\delta_{B_2^c}(x)$ for any $x\in B_1$. Here we use the definition $\delta_A(x):=\text{dist}(x, \partial A)$ for any $A\subset \Rd$ and $x\in \Rd$. Throughout this article, without loss of generality, we always  assume that $R=R_1$.

\medskip
Throughout this article, we assume additional conditions for the regularities of $\kappa$ and $\phi$ appeared in \eqref{a:J}.
\begin{description}
	\item[(K$_\eta$)]
	There are $\kappa_1>0$ and $\ua/2<\eta\le1$ such that $|\kappa(x,x+h)-\kappa(x,x)|\leq \kappa_1|h|^{\eta}$ for every $x,\,h\in \Rd$, $|h|\leq 1$.
	\item[({\bf SD})]
	$\phi\in C^1(0,\infty) $ and $ r\to -(\nu^1)'(r)/r$  is decreasing.
\end{description}

\medskip
We now state our main results of this article.
Let 
\begin{align}\label{e:dax}
	\Psi(t,x):=\left(1\wedge\sqrt{\frac{\phi(\delta_D(x))}{t}}\right)\qquad \text{ for } t>0, x\in \bR^d.
\end{align}

\begin{theorem}\label{t:nmain}
	Suppose that $X$ is a symmetric pure jump Hunt process whose jumping intensity kernel $J$ satisfies the conditions \eqref{a:J}, $({\bf SD})$ and ${\bf (K_\eta)}$. 
	Suppose that  
	$D$ is a $C^{1,1}$ open set in $\bR^d$ with characteristics $(R, \Lambda)$.
	Then, for each $T>0 $, there exists $C_2=C_2(\phi, \kappa_0, \kappa_1,  \eta, R, \Lambda,  T, d)>0$ such that the transition density $p_D(t,x,y)$ of $X^D$ has the following upper estimate:
	\begin{equation}\label{eq:dhke_u}
		p_D(t, x, y)\le {C_2} \Psi(t,x) \Psi(t,y) \,p(t, x, y),\quad \text{ for any }(t,x,y)\in(0,T]\times D\times D.
	\end{equation}
\end{theorem}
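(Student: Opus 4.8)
The plan is to follow the strategy of \cite{CKS12}, in the anisotropic form used in \cite{CHZ}, the one new ingredient being the boundary survival estimate for the present, larger, class of processes. As a preliminary observation, since $X$ is a conservative Feller process whose continuous density $p$ satisfies \eqref{eq:hke}, the killed process $X^D$ has a density
\[
p_D(t,x,y)=p(t,x,y)-\E_x\!\left[p(t-\tau_D,X_{\tau_D},y);\,\tau_D<t\right],
\]
which is nonnegative, symmetric, satisfies Chapman--Kolmogorov on $D$, obeys $p_D\le p$, and is jointly continuous (arguing as in \cite{CKS12} from the regularity underlying \eqref{eq:hke}, where $({\bf SD})$ and ${\bf (K_\eta)}$ enter). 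Using $R=R_1$, the elementary inequalities $\Psi(t/2,\cdot)\le\sqrt2\,\Psi(t,\cdot)$ and $\Psi(t,\cdot)\ge 1\wedge\sqrt{\phi(R)/T}$ on $\{\delta_D\ge R\}$, it suffices to prove \eqref{eq:dhke_u} in the regime where it is not already immediate from $p_D\le p$.

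The heart of the matter — and the step I expect to be the main obstacle — is the survival probability bound $\Pp_x(\tau_D>t)\le C\,\Psi(t,x)$ for $(t,x)\in(0,T]\times D$. First I would reduce via the exterior ball condition: if $\delta:=\delta_D(x)<R$ there is a ball $B=B(x_0,R)\subset\Rd\setminus D$ tangent to $\partial D$ at the nearest boundary point $z$ of $x$, with $\delta_{B^c}(x)=\delta$ and $\delta_D\le\delta_{B^c}$ on $D$; hence $\tau_D\le\tau_{B^c}$ and it is enough to bound $\Pp_x(\tau_{B^c}>t)$. Next, remaining outside $B$ forces, for as long as $X$ stays in a small ball $B(z,\rho)$, the component of $X-z$ in the inward normal direction at $z$ to stay above $-c\rho^2/R$; choosing $\rho\asymp\sqrt{R\delta}$ makes this threshold comparable to $-\delta$, so that on this event a one-dimensional symmetric process started from height of order $\delta$ stays above a level of order $-\delta$. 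Because every coordinate of $X$ is a symmetric process moving on the scale $\phi$ (by \eqref{e:large} and $({\bf WS})$), the displacement of $X$ in any fixed direction carries the symmetry needed for a Sparre--Andersen/arcsine-type bound, giving the decay $\sqrt{\phi(\delta)/t}$ for that event. Since $t$ may be far larger than $\phi(\rho)$, this one-scale estimate must be propagated by a dyadic iteration over the scales between $\delta$ and $R$, at each stage applying the bound already obtained on the larger scale and using \eqref{e:large} to discard the contribution of surviving excursions that leave $B(z,\rho)$; this multiscale, anisotropy-aware argument — which in the stable case of \cite{CHZ} is replaced by explicit half-space computations — is where the work concentrates.

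Granting the survival estimate, the factorization follows from \eqref{eq:hke} by the usual semigroup manipulations. From Chapman--Kolmogorov on $D$ and $p_D\le p$ one gets, for $s\le T$,
\[
p_D(s,x,z)\le\Bigl(\sup_u p(s/2,u,z)\Bigr)\,\Pp_x(\tau_D>s/2)\le C\,[\phi^{-1}(s)]^{-d}\,\Psi(s,x),
\]
and, by symmetry, the same bound with $\Psi(s,z)$; splitting $t$ into three pieces and applying these one-sided bounds to the outer factors and $p_D\le p$ to the middle one yields $p_D(t,x,y)\le C\,\Psi(t,x)\Psi(t,y)\,[\phi^{-1}(t)]^{-d}$. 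When $|x^i-y^i|\le\phi^{-1}(t)$ for every $i$, each factor in \eqref{eq:hke} is comparable to $1$, so $p(t,x,y)\asymp[\phi^{-1}(t)]^{-d}$ and \eqref{eq:dhke_u} follows.

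Finally I would restore the spatial decay when $|x^{i_0}-y^{i_0}|>\phi^{-1}(t)$ for some $i_0$. Since the jumps of $X$ are axis-parallel while small jumps move $X$ only by $O(\phi^{-1}(t))$ over time $t$, any trajectory from $x$ to $y$ remaining in $D$ must make one jump changing the $i_0$-th coordinate by an amount comparable to $|x^{i_0}-y^{i_0}|$; the L\'evy system formula for $X^D$ then gives
\[
p_D(t,x,y)\le C\int_0^{t}\!\!\int_{D}\!\!\int_{D} p_D(s,x,z)\,J(z,w)\,p_D(t-s,w,y)\,\1_{\{w-z=e_{i_0}\tau,\,|\tau|\ge |x^{i_0}-y^{i_0}|/4\}}\,dw\,dz\,ds .
\]
Bounding the outer densities by the one-sided estimates above to extract $\Psi(t,x)\Psi(t,y)$, and carrying out the remaining integrations with $J\asymp J^\phi$ and \eqref{e:large}, reproduces the factor $p(t,x,y)$ by \eqref{eq:hke}; applying this big-jump decomposition once for each coordinate in which $x$ and $y$ are far apart removes all the oversized differences and gives \eqref{eq:dhke_u} in general.
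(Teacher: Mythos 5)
The skeleton of your argument (survival probability bound, semigroup factorization, big-jump analysis for far coordinates) matches the paper's overall strategy, but the step you yourself identify as the heart of the matter — the bound $\Pp_x(\tau_D>t)\le C\Psi(t,x)$ — is left as a sketch whose proposed route would not work for this class of processes. Projecting $X-z$ onto the inward normal direction does not produce a one-dimensional symmetric process to which a Sparre--Andersen/arcsine bound applies: $X$ is not a L\'evy process (the kernel $\kappa(x,y)$ is state-dependent, only bounded and $\eta$-H\"older), its jumps are axis-parallel so a generic normal direction is not a jump direction, and the projected process is neither Markov nor has exchangeable symmetric increments; moreover the sharp rate $\sqrt{\phi(\delta_D(x))/t}$ is precisely what cannot be obtained from a soft one-scale estimate plus a dyadic iteration without losing the exponent. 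The paper's substitute for all of this is the content of Section 2: the barrier $h_{Q,R_0}(y)=V(\delta_D(y))\1_{D\cap B(Q,R_0)}(y)$ built from the renewal function $V$ of the ladder-height process of a coordinate of the auxiliary L\'evy process $Z$, the harmonicity of $w(x)=V(x^d\vee 0)$ (\autoref{std}), and the generator estimates \autoref{g:st1} and \autoref{g:st1-1}, where the hypotheses $(\mathbf{SD})$ and $(\mathbf K_\eta)$ are used in an essential way to control the passage from $Z$ to $X$ (you instead attribute their role to regularity of $p$, which is a misreading); Dynkin's formula then yields $\E_x[\tau_{D_Q(s,s)}]\le C\,V(s)V(\delta_D(x))$ (\autoref{L:exit}(1)) and hence the survival bound \autoref{lemma:tau}. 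Your proposal offers no workable replacement for this chain, so the central estimate remains unproven.

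There are also secondary gaps in your off-diagonal step. The assertion that a path with $|x^{i_0}-y^{i_0}|>\phi^{-1}(t)$ staying in $D$ \emph{must} make a single jump of size comparable to $|x^{i_0}-y^{i_0}|$ is not literally true (many moderate jumps in that coordinate can accumulate), and your displayed L\'evy-system inequality silently discards the complementary event. The paper's \autoref{lemma:pU} keeps exactly the terms you drop — $\Pp_x(X_{\tau_{U_1}}\in U_2)\sup p_U$ and $t^{-1}\E_x[\tau_{U_1}]\sup_{z\in U_1}p_U(t/2,z,y)$ — and both of these again require the exit estimates of \autoref{st5} and \autoref{L:exit}; \autoref{prop:upper1} then treats all far coordinates simultaneously through the index set $E$, rather than ``once per coordinate,'' which is needed to recover the full product in \eqref{eq:hke}. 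Your final near-diagonal factorization and the concluding symmetrization are fine, but they are the easy part; as written the proposal does not constitute a proof of \eqref{eq:dhke_u}.
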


\medskip
For the lower bound estimate, we need extra conditions:
for any $u\in \bR^d$, $a\in \bR$ and $1\le i\le d$, 
define a new point  
$$[u]_a^i:=(u^1, \ldots,u^{i-1}, a,{u^{i+1}},\ldots, u^d )\in \bR^d.$$
For any $\gamma\in (0, 1]$, we consider the condition 
\begin{itemize}
\item [$\bf(D_\gamma)$: ] For any $x, y\in U$ and for any permutation  $\{i_1, i_2, \ldots, i_d \}$ of $\{1, 2, \ldots, d\}$, let 	$\xi_{(0)},\xi_{(1)},\dots,\xi_{(d)}$ be points that trace from $x$ to $y$ with
$$\xi_{(0)}:=x,\  \xi_{(1)}:=[x]_{y^{i_1}}^{i_1},\ \xi_{(2)}:=[\xi_{(1)}]_{y^{i_2}}^{i_2},\ldots, \xi_{(d-1)}:=[\xi_{(d-2)}]_{y^{i_{d-1}}}^{i_{d-1}}\text{ and }\xi_{(d)}:=[\xi_{(d-1)}]_{y^{i_d}}^{i_d}=y.$$
\end{itemize}
We say that $U$ satisfies condition $\bf (D_\gamma)$ if for any $r>0$ and $x, y\in U$ with $\delta_U(x)\wedge \delta_U(y)\ge r$, there exists a permutation  $\{i_1, i_2, \ldots, i_d\}$ of $ \{1, 2, \ldots, d\}$ such that 
$${B( \xi_{(k)}, \gamma r)}\subset U,\qquad \text{ for all }k=1,2,\ldots, d.$$
By the definition of $\bf (D_\gamma)$, one can easily see that 
\begin{equation}\label{eq:rel}
	\xi_{(l)}^{i_k}=\begin{cases}
		y^{i_k}&\quad\text{ if }l\ge k\\
		x^{i_k}&\quad\text{ if }l< k.
	\end{cases}
\end{equation} 

\begin{theorem}\label{t:nmain2}
	Under the setting of \autoref{t:nmain}, assume in addition that $D$ satisfies $(\mathbf D_\gamma)$ for some $\gamma\in(0,1]$.
	\begin{enumerate}[\rm(1)]
		\item Then for any $T>0$, there exists $C_3:=C_3(\phi, \kappa_0, \kappa_1,  \eta, R, \Lambda,  T, d, \gamma)>0$ such that for any $t\in(0,T]$ and $x,y\in D$,
		\begin{equation}\label{eq:dhke_l}
			p_D(t,x,y)\ge C_3\Psi(t,x)\Psi(t,y)p(t,x,y).
		\end{equation}
		\item Assume that $D$ is bounded, then there exists $C_4:=C_4(\phi, \kappa_0, \kappa_1,  \eta, R, \Lambda,  d, \gamma, \text{diam}(D))>1$ such that
		\begin{align}\label{eq:La1}
			C_4^{-1}\le \lambda^D\le C_4,
		\end{align}
		where $-\lambda^D<0$ is the first eigenvalue of $-\cL$ on $D$.
		\medskip
		
		\noindent	Also for any $T>0$, there exists $C_5=C_5(\phi, \kappa_0, \kappa_1,  \eta, R, \Lambda,  T, d, \gamma, \text{diam}(D))> 1$ such that for all $t\in [T, \infty)$ and $x, y\in D$, 
		\begin{align}\label{eq:La2}
			C_5^{-1} e^{-t \lambda^D } \sqrt{\phi\left(\delta_D(x)\right)}\sqrt{\phi\left(\delta_D(y)\right)}\le p_D(t, x, y)\le C_5 e^{-t \lambda^D } \sqrt{\phi\left(\delta_D(x)\right)}\sqrt{\phi\left(\delta_D(y)\right)}.
		\end{align}
	\end{enumerate}
\end{theorem}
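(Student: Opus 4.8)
The plan is to prove the small-time lower bound \eqref{eq:dhke_l} first, and then to deduce the large-time statements of part~(2) from it by a spectral bootstrap. For \eqref{eq:dhke_l} I would follow the Chen--Kim--Song scheme adapted to the anisotropic process $X$: it reduces the estimate to an \emph{interior lower bound} and a \emph{near-boundary push-in estimate}, glued by the Chapman--Kolmogorov identity. For the interior bound, assume first that $\delta_D(x)\wedge\delta_D(y)\ge c\,\phi^{-1}(t)$. I would chain along the waypoints $\xi_{(0)},\xi_{(1)},\dots,\xi_{(d)}$ furnished by $(\mathbf D_\gamma)$ with $r=c\,\phi^{-1}(t)$, so that at the $k$-th of $d$ steps the process changes only its $i_k$-th coordinate, from $x^{i_k}$ to $y^{i_k}$, while staying inside $B(\xi_{(k)},\gamma c\,\phi^{-1}(t))\subset D$; combining the one-step interior estimates with the product structure of the free kernel in \eqref{eq:hke} (only the $i_k$-th factor varies at step $k$) and integrating over $\phi^{-1}(t)$-balls about each $\xi_{(k)}$ gives $p_D(t,x,y)\gtrsim p(t,x,y)$ in this regime. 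This is the chaining already used in \cite{KW22,KKK22} to prove the free lower bound in \eqref{eq:hke}, with $(\mathbf D_\gamma)$ inserted precisely to keep the trajectory inside $D$.

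The second ingredient is that for every $x\in D$ and $t\in(0,T]$ there is a point $x_t\in D$ with $\delta_D(x_t)\asymp\phi^{-1}(t)\wedge R$, at a comparable distance from $x$, such that
\begin{equation*}
	\Pp_x\!\left(X^D_{t/3}\in B\!\left(x_t,\,c_0(\phi^{-1}(t)\wedge R)\right)\right)\ \gtrsim\ \Psi(t,x),
\end{equation*}
which I would obtain from the $C^{1,1}$ regularity of $D$ (each boundary point is regular, and the relevant $\cE$-harmonic function near $\partial D$ behaves like $\sqrt{\phi(\delta_D(\cdot))}$), scaling, the interior estimates, and the conditions $({\bf SD})$ and $({\bf K_\eta})$ with $\eta>\ua/2$ to control the jump-out distribution via a Dynkin/L\'evy-system computation; the exponent $\tfrac12$ in $\Psi$ is exactly this boundary decay rate. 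Writing $\rho=c_0(\phi^{-1}(t)\wedge R)$ and splitting time into three pieces,
\begin{equation*}
	p_D(t,x,y)\ \ge\ \int_{B(x_t,\rho)}\int_{B(y_t,\rho)} p_D(\tfrac t3,x,z)\,p_D(\tfrac t3,z,w)\,p_D(\tfrac t3,w,y)\,dw\,dz,
\end{equation*}
one bounds $p_D(\tfrac t3,z,w)$ below by the interior estimate (here $z,w$ lie at depth $\gtrsim\phi^{-1}(t)$ and $|z-w|\lesssim|x-y|+\phi^{-1}(t)$, so by the doubling properties of $p$ coming from $({\bf WS})$ this is $\gtrsim p(t,x,y)$), and bounds $\int_{B(x_t,\rho)}p_D(\tfrac t3,x,z)\,dz$ and $\int_{B(y_t,\rho)}p_D(\tfrac t3,w,y)\,dw$ below by $\Psi(t,x)$ and $\Psi(t,y)$; a further chaining with a uniformly bounded number of steps patches the argument across the threshold $\phi^{-1}(t)\sim R$ so as to cover all of $(0,T]$. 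This yields \eqref{eq:dhke_l}.

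For part~(2) assume $D$ connected (the disconnected case is handled componentwise). Since $D$ is bounded, $p_D\le p\le C_1[\phi^{-1}(t)]^{-d}$ and $|D|<\infty$ make $(P^D_t)$ Hilbert--Schmidt, so its spectrum is discrete, $0<\lambda^D=\lambda_1<\lambda_2\le\cdots$, with an orthonormal eigenbasis $\{\varphi_n\}$ ($\varphi_1>0$) and $p_D(t,x,y)=\sum_n e^{-\lambda_n t}\varphi_n(x)\varphi_n(y)$ for $t>0$. For $t$ in any compact subinterval of $(0,\infty)$ one has $\Psi(t,\cdot)\asymp\sqrt{\phi(\delta_D(\cdot))}=:h(\cdot)$ and $p(t,\cdot,\cdot)\asymp1$ on $D\times D$, so by \eqref{eq:dhke_u} and \eqref{eq:dhke_l}, $p_D(t,x,y)\asymp h(x)h(y)$ there (with constants depending on the interval). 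Inserting this into $\varphi_1(x)=e^{\lambda^D t_0}\int_D p_D(t_0,x,y)\varphi_1(y)\,dy$ for a fixed $t_0>0$ gives $\varphi_1\asymp h$. The bound \eqref{eq:La1} follows from domain monotonicity of the bottom eigenvalue: $D$ contains a ball of radius $R$ and is contained in a ball of radius $\mathrm{diam}(D)$, and the bottom Dirichlet eigenvalue of a ball of radius $\rho$ is a positive finite number depending only on $\rho$ and $\phi$ (a special case of the estimates of this article). Finally, for \eqref{eq:La2} split off the $n=1$ term: by Cauchy--Schwarz on the spectral expansion, for $t\ge2t_0$,
\begin{equation*}
	\Big|\sum_{n\ge2}e^{-\lambda_n t}\varphi_n(x)\varphi_n(y)\Big|\ \le\ e^{-\lambda_2(t-2t_0)}\,p_D(2t_0,x,x)^{1/2}p_D(2t_0,y,y)^{1/2}\ \lesssim\ e^{-\lambda_2(t-2t_0)}h(x)h(y),
\end{equation*}
while the $n=1$ term equals $e^{-\lambda^D t}\varphi_1(x)\varphi_1(y)\asymp e^{-\lambda^D t}h(x)h(y)$; since $\lambda_2>\lambda^D$, for $t$ beyond some $T_1$ the $n=1$ term dominates and \eqref{eq:La2} holds, while on $[T,T_1]$ it follows from $p_D(t,x,y)\asymp h(x)h(y)\asymp e^{-\lambda^D t}h(x)h(y)$.

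The principal obstacle is the near-boundary push-in estimate for the anisotropic process. Because $X$ can only jump parallel to a coordinate axis, the isotropic constructions of a near-boundary barrier (and boundary Harnack arguments) do not apply directly, and one must combine the $C^{1,1}$ geometry with the coordinate-wise structure of $J$; obtaining the sharp exponent $\tfrac12$ in $\Psi$ is where $({\bf SD})$ and the H\"older condition $({\bf K_\eta})$ with $\eta>\ua/2$ are needed, so that the $\kappa$-perturbation stays subordinate to the principal part near $\partial D$ and the exit distribution can be controlled. The second, more technical obstacle is making the $(\mathbf D_\gamma)$ chaining quantitative: one must check that the waypoints remain at depth $\gtrsim\phi^{-1}(t)$, that the $d$-fold product of one-coordinate transition densities is comparable to $p(t,x,y)$ uniformly, and that the two regimes can be patched across $\phi^{-1}(t)\sim R$ to give \eqref{eq:dhke_l} for all $t\in(0,T]$.
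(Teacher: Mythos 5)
For part (1) your plan is essentially the paper's proof: the interior bound is exactly \autoref{prp:lower1}, obtained by chaining one coordinate at a time along the $(\mathbf D_\gamma)$ waypoints via the L\'evy system and the near-diagonal bound; the boundary push-in is \autoref{L:intpD}, proved from the exit-distribution estimates of \autoref{L:exit} (which is where $({\bf SD})$, $({\bf K_\eta})$ and the renewal function $V\asymp\sqrt{\phi}$ produce the exponent $\tfrac12$); and the gluing is the same three-fold Chapman--Kolmogorov split at times $t/3$. One caveat: your push-in statement prescribes a single ball $B(x_t,\rho)$ around a pre-chosen point, which is slightly stronger than what the exit estimates deliver --- they only say the process lands \emph{somewhere} in a collar at depth $\asymp\phi^{-1}(t)$, not near a prescribed point. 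The paper therefore works with the set $E_x$, a union of balls along that collar, which is all the gluing requires; you should do the same rather than fix $x_t$.

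For part (2) you take a genuinely different route. The paper avoids the full spectral expansion: it shows $f_D\asymp 1\wedge\sqrt{\phi(\delta_D(\cdot))}$ from the fixed-time two-sided bounds, deduces $\lambda^D\le C$ by comparing the two eigenfunction estimates, gets $\lambda^D\ge c$ from the on-diagonal exponential decay of \autoref{eq:Ondia} (a uniform exit-probability argument on the bounded set $D$), and proves \eqref{eq:La2} by a three-piece semigroup decomposition combined with the identity $\int_{D\times D} f_D\,p_D\,f_D = e^{-t\lambda^D}$; no spectral gap is ever used. Your alternative --- domain monotonicity for \eqref{eq:La1} and splitting off the first eigenmode with a Cauchy--Schwarz tail bound for \eqref{eq:La2} --- is viable and more transparent for very large $t$, but it needs two inputs the paper's argument sidesteps: (i) positivity of the principal eigenvalue of the comparison balls, which again reduces to the same uniform exit-probability estimate as \autoref{eq:Ondia}, so no work is saved there; and (ii) simplicity of $\lambda^D$ together with a strict gap $\lambda_2>\lambda^D$. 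On point (ii) your parenthetical ``the disconnected case is handled componentwise'' is the one genuinely wrong step: for a pure-jump process the killed semigroup on a disconnected $D$ does \emph{not} decompose over components, since $X$ can jump from one component into another without leaving $D$, so a componentwise reduction is unavailable. Fortunately it is also unnecessary: the strict positivity of $p_D$ furnished by your part (1) (i.e.\ \eqref{eq:dhke_l} together with $p>0$) makes $P^D_t$ positivity improving, which yields simplicity of $\lambda^D$ and the gap by a Jentzsch/Perron--Frobenius argument, with no connectedness assumption. With that repair, and the compact-interval patch on $[T,T_1]$ that you already describe, your version of part (2) goes through.
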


\medskip
We now consider $x,y\in D$ with $x^i\ne y^i$ for all $1\le i\le d$. By integrating the heat kernel estimates in  \autoref{t:nmain} and \autoref{t:nmain2} with respect to $t\in(0,\infty)$, one gets the following Green function estimates.

\begin{theorem}\label{T:green_new}
	Suppose that 
	$X$ is a symmetric pure jump Hunt process whose jumping intensity kernel $J$ satisfies the conditions \eqref{a:J}, {\bf (SD)} and {\bf (K$_\eta$)}. 
	Suppose that $D$ is a bounded $C^{1,1}$ open set in $\bR^d$ with characteristics $(R, \Lambda)$  under the condition $(\bf D_\gamma)$.
	For any $(x, y)\in D\times D$, suppose that $x^i\ne y^i$ for all $i\in\{1, \ldots, d\}$. 
	Then  when $d\ge 2$, there exists $C_6>1$ such that 
	\begin{align*}
		&	C_6^{-1} 
		\Big(1\wedge\frac{\sqrt{\phi(\delta_D(x))}}{\sqrt{\phi( r_1(x,y))}}\Big)\Big(1\wedge\frac{\sqrt{\phi(\delta_D(y))}}{\sqrt{\phi( r_1(x,y))}}\Big)\phi( r_1(x,y))^{d+1}\prod_{l=1}^d\frac{1}{|x^{i}-y^{i}|\phi(|x^{i}-y^{i}|)}\\
		\le 	&\	G_D(x, y)\\
		\le	&\ C_6 \left(
		\Big(1\wedge\frac{\sqrt{\phi(\delta_D(x))}}{\sqrt{\phi(r_2(x, y))}}\Big)\Big(1\wedge\frac{\sqrt{\phi(\delta_D(y))}}{\sqrt{\phi(r_2(x, y))}}\Big)	\phi(r_2(x, y))^{d+1}\right)\prod_{l=1}^d\frac{1}{|x^{i}-y^{i}|\phi(|x^{i}-y^{i}|)}.
	\end{align*}
	where $r_1(x,y):=\min_{i\in\{1,\dots,d\}}|x^i-y^i|,r_2(x,y):=\max_{i\in\{1,\dots,d\}}|x^i-y^i|$.
	When $d=1$,
	letting $a(x, y):=\sqrt{\phi(\delta_D(x))}\sqrt{\phi(\delta_D(y))}$ and $x^+:=\max (x, 0)$, we have that 
	\begin{align*}
		G_D(x, y)\asymp \displaystyle	\frac{a(x, y)}{|x-y|}\wedge \left(\frac{a(x, y)}{\phi^{-1}(a(x, y))}+\Big(\int_{|x-y|}^{\phi^{-1}(a(x, y))}\frac{\phi(s)}{s^2}ds\Big)^+\right).
	\end{align*}
\end{theorem}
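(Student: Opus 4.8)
Since $p_D(t,\cdot,\cdot)$ is the transition density of the killed process $X^D$, we have $G_D(x,y)=\int_0^\infty p_D(t,x,y)\,\d t$. The plan is to fix $T_0:=\phi(\operatorname{diam}(D))\vee 1$ and split this as $\int_0^{T_0}+\int_{T_0}^\infty$. For the large-time part, \autoref{t:nmain2}(2) applies: by \eqref{eq:La1} the first eigenvalue satisfies $\lambda^D\asymp 1$, so \eqref{eq:La2} yields
\begin{equation*}
\int_{T_0}^\infty p_D(t,x,y)\,\d t\;\asymp\;\frac{e^{-T_0\lambda^D}}{\lambda^D}\,\sqrt{\phi(\delta_D(x))\phi(\delta_D(y))}\;\asymp\;\sqrt{\phi(\delta_D(x))\phi(\delta_D(y))},
\end{equation*}
with constants depending only on the data and $\operatorname{diam}(D)$. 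Using $|x^i-y^i|\le r_2(x,y)\le\operatorname{diam}(D)$, the boundedness of $\delta_D$, and the elementary consequence $\phi(r)/r^{d}\gtrsim 1$ of {\bf(WS)} (valid since $\ua<2\le d$; for $d=1$ one uses instead $|x-y|\le\operatorname{diam}(D)$ and $\phi^{-1}(a(x,y))\le\operatorname{diam}(D)$), one checks that this term is bounded above by a constant multiple of the claimed upper expression; hence it is retained only for the upper estimate and simply dropped for the lower one. For the small-time part, \autoref{t:nmain} together with \autoref{t:nmain2}(1) give $p_D(t,x,y)\asymp\Psi(t,x)\Psi(t,y)p(t,x,y)$ for $t\in(0,T_0]$, and substituting \eqref{eq:hke} reduces the task to two-sided estimates of
\begin{equation*}
I:=\int_0^{T_0}\Big(1\wedge\tfrac{\phi(\delta_D(x))}{t}\Big)^{1/2}\Big(1\wedge\tfrac{\phi(\delta_D(y))}{t}\Big)^{1/2}\,[\phi^{-1}(t)]^{-d}\prod_{i=1}^d\Big(1\wedge\tfrac{t\phi^{-1}(t)}{|x^i-y^i|\phi(|x^i-y^i|)}\Big)\,\d t .
\end{equation*}
(The hypothesis $x^i\ne y^i$ for all $i$ ensures $r_1(x,y)>0$, so all of the above are meaningful and finite.)

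To estimate $I$ when $d\ge 2$, relabel the coordinates so that $r_1(x,y)=|x^{(1)}-y^{(1)}|\le\cdots\le|x^{(d)}-y^{(d)}|=r_2(x,y)$, and cut $(0,T_0]$ at the (at most $d+2$) points $\phi(|x^{(1)}-y^{(1)}|)\le\cdots\le\phi(|x^{(d)}-y^{(d)}|)$, $\phi(\delta_D(x))$ and $\phi(\delta_D(y))$. On each resulting subinterval every factor $1\wedge(\cdots)$ equals, up to a multiplicative constant, either $1$ or the corresponding ratio, so the integrand is comparable to an expression of the form $c\,t^{\alpha}[\phi^{-1}(t)]^{\beta}$ with $\beta\le0$ and $\alpha$ determined by which factors are in their decay regime; changing variables $t=\phi(s)$, integrating by parts to remove $\phi'$, and applying the standard power-integral estimates available under {\bf(WS)} (as used, e.g., in \cite{CKS12,KKK22}), each such piece is explicitly comparable to a power-type quantity in its endpoints, i.e.\ in $\phi$, $|x^i-y^i|$, $\delta_D(x)$ and $\delta_D(y)$. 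Summing the finitely many pieces and bounding each, via $|x^i-y^i|\le r_2(x,y)$ and {\bf(WS)}, by the claimed upper expression (with $r_2(x,y)$) gives the upper estimate. For the lower estimate it suffices to keep the single subinterval $t\in[\tfrac12\phi(r_1(x,y)),\phi(r_1(x,y))]\subset(0,T_0]$, on which the integrand is comparable to a constant by {\bf(WS)}, and integrating the lower bound \eqref{eq:dhke_l} there produces exactly
\begin{equation*}
\Big(1\wedge\tfrac{\sqrt{\phi(\delta_D(x))}}{\sqrt{\phi(r_1(x,y))}}\Big)\Big(1\wedge\tfrac{\sqrt{\phi(\delta_D(y))}}{\sqrt{\phi(r_1(x,y))}}\Big)\,\phi(r_1(x,y))^{d+1}\prod_{i=1}^d\frac{1}{|x^i-y^i|\phi(|x^i-y^i|)},
\end{equation*}
which is the claimed lower bound.

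For $d=1$ the product is the single factor $1\wedge\frac{t\phi^{-1}(t)}{|x-y|\phi(|x-y|)}$, and after the same reduction one estimates $\int_0^{T_0}\big(1\wedge\sqrt{\phi(\delta_D(x))/t}\,\big)\big(1\wedge\sqrt{\phi(\delta_D(y))/t}\,\big)[\phi^{-1}(t)]^{-1}\big(1\wedge\tfrac{t\phi^{-1}(t)}{|x-y|\phi(|x-y|)}\big)\,\d t$ plus the large-time term $\asymp a(x,y)$. One cuts $(0,T_0]$ at $t=\phi(|x-y|)$, $t=\phi(\delta_D(x))$, $t=\phi(\delta_D(y))$ and $t=a(x,y)$, and evaluates the pieces as above: the term $\int_{|x-y|}^{\phi^{-1}(a(x,y))}\phi(s)/s^2\,\d s$ arises from the range where the jump factor is $\asymp1$, where the substitution $t=\phi(s)$ and an integration by parts produce $\int s^{-1}\,\d\phi(s)$ whose boundary contributions are absorbed into the remaining pieces; the term $a(x,y)/\phi^{-1}(a(x,y))$ is the contribution of $t\gtrsim a(x,y)$; and $a(x,y)/|x-y|$ dominates---so the minimum is attained---exactly when $|x-y|\gtrsim\phi^{-1}(a(x,y))$, i.e.\ when the jump factor never reaches $1$ on the relevant range. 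Collecting and comparing these, together with $x^+\ge 0$ and the monotonicity of $\phi$, yields the stated one-dimensional formula.

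The only genuinely delicate point is the combinatorial bookkeeping for $d\ge2$: the order of the $d+2$ cut-points depends on the relative sizes of $r_1(x,y),\dots,r_2(x,y),\delta_D(x),\delta_D(y)$, and one must verify, uniformly over all orderings, that the total is squeezed between the stated lower expression (with $r_1$) and upper expression (with $r_2$). This becomes routine once one records the scaling identity $\int_{r}^{\infty}\phi(s)\,s^{-d-1}\,\d s\asymp\phi(r)\,r^{-d}$ for $r>0$ (valid because $\ua<2\le d$), which both keeps $I$ finite and isolates the dominant scale $s\asymp r_2(x,y)$ in the upper bound.
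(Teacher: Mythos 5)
Your argument is correct and, for $d\ge 2$, it follows essentially the same route as the paper: write $G_D(x,y)=\int_0^\infty p_D(t,x,y)\,dt$, use \autoref{t:nmain} and \autoref{t:nmain2}(1) on the finite-time range and \autoref{t:nmain2}(2) beyond it, and split the finite-time integral at the $\phi$-scales of the coordinate gaps. The paper cuts only at $\phi(r_1)$, $\phi(r_2)$ and $T=\phi(\mathrm{diam}(D))$, keeps the boundary factors $\Psi$ inside the integrals and changes variables $v=\phi(r_j)/t$; your additional cuts at $\phi(\delta_D(x))$, $\phi(\delta_D(y))$ are harmless but unnecessary. Two points where you deviate are fine or even slightly cleaner: your lower bound, obtained by integrating \eqref{eq:dhke_l} over the single window $[\tfrac12\phi(r_1),\phi(r_1)]$, is a leaner version of the paper's exact asymptotic evaluation of $I_0=\int_0^{\phi(r_1)}$ and does yield precisely the stated lower expression; and you make explicit the absorption of the large-time contribution $\asymp\sqrt{\phi(\delta_D(x))\phi(\delta_D(y))}$ into the upper expression via $\phi(r)/r^{d}\gtrsim 1$ for $r\le\mathrm{diam}(D)$ (using $\ua<d$), a step the paper leaves implicit. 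The genuine divergence is the case $d=1$: the paper disposes of it in one line by citing \cite[Theorem 1.6]{GKK20}, whereas you sketch a direct re-derivation by integrating the kernel bounds. That route is viable (it is how the cited result is proved), but as written the crossover analysis that produces the minimum with $a(x,y)/|x-y|$ and the $(\cdot)^{+}$ term is asserted rather than carried out, so you should either complete that computation in detail or simply quote \cite{GKK20} as the paper does.
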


\medskip

The rest of this article is organized as follows. In \autoref{sec:analandexit}, we compute some key upper bounds of the generator of $Z$ with testing functions defined on $C^{1,1}$ open sets, and discuss the key estimates on exit distributions for $X$, see \autoref{L:exit}. In \autoref{sec:regupper}, we first discuss the regularity of the transition density $p_D(t,x,y)$ of $X^D$, that is, $p_D(t,x,y)$ can be refined to be H\"older continuous for $(t,x,y)\in \bR_+\times D\times D$. Then we adopt the method in \cite[Section 5]{GKK20} to obtain the upper bound of $p_D(t,x,y)$ stated in \autoref{t:nmain}. However, a variation of the key lemma \cite[Lemma 5.1]{GKK20} is needed since the process here is anisotropic (see, \autoref{lemma:pU}). In \autoref{sec:lower}, we prove the lower bound of $p_D(t,x,y)$ in \autoref{t:nmain2}(1), by utilizing a preliminary lower bound result in \autoref{prp:lower1} as well as the estimates of exit distributions presented in \autoref{L:exit}. The large time Dirichlet heat kernel estimates in \autoref{t:nmain2}(2) are proved in \autoref{sec:largeDHKE}.
In \autoref{sec:green}, as an application of \autoref{t:nmain} and \autoref{t:nmain2}, we derive the Green function estimates. We would like to mention that this is  the very first attempt to observe the Green function estimates of anisotropic Markov processes, though only for the points with different coordinates in all directions.
\medskip

{\bf Notations.}
For  a function space ${\mathbb F}(U)$
on an open set $U$ in $\bR^d$, we let   
\begin{align*}
	{\mathbb F}_c(U):=&\{f\in{\mathbb F}(U): f \mbox{ has  compact support}\}.
\end{align*}
We use $\langle \cdot, \cdot\rangle$  to denote the inner product in $L^2(\Rd)$ and $\|\cdot\|_k:=\|\cdot\|_{L^k(\bR^d)}$.
For $x\in\bR^d$ and $r>0$, we use $B(x, r)$ to denote  a ball centered at $x$ with  radius $r$, and $Q(x,r)$ to denote  a cube centered at $x$ with  side length $r$.
The letter $c=c(a, b, \ldots)$ will denote a positive constant depending on $a,b,\ldots$, and it may change at each appearance.
The labeling of the constants $c_1, c_2, \ldots$ begins anew in the proof of each statement.
The notation $:=$ is to be read as ``is defined to be".
We use the notation $f\asymp g$ if the quotient $f/g$ is comparable to some positive constants.

\section{Analysis on $Z$ and Exit distribution for $X$}\label{sec:analandexit}
{Recall that $Z=(Z^1,\dots,Z^d)$ is the $d$-dimensional L\'evy process where $Z^i$'s are independent 1-dimensional L\'evy processes with the jumping kernel $J^{\phi}(x,y) =|x^i-y^i|^{-1}\phi(|x^i-y^i|)^{-1}$, where $\phi$ is an increasing function satisfying ({\bf WS}) and ({\bf SD}). $X=(X^1,\dots,X^d)$ is the anisotropic Markov process with the jumping kernel $J$ satisfying  ({\bf SD}) and ($\mathbf K_\eta$). 
	
	\medskip
	In \autoref{subsec:analysis}, we introduce the renewal function $V$ corresponding to the coordinate process $Z^d$, and obtain the estimates for the generator of $Z$ applied to the testing functions on $C^{1,1}$ open sets, see \autoref{g:st1}. In \autoref{subsec:exit}, we give the key estimates on exit distributions for $X$, see \autoref{L:exit}.}
\subsection{Analysis on $Z$}\label{subsec:analysis}

Recall that $Z^{d}$ is a pure jump L\'{e}vy process with L\'{e}vy measure $\nu^1(dr)$. 
The L\'evy-Khintchine (characteristic) exponent of $Z^{d}$ has the form
\begin{equation*}
	\psi(|\xi|)=\int_\bR \left(1- \cos (\xi r) \right) \nu^1(dr) \qquad\text{ for } \xi\in\bR.
\end{equation*}

Let $M_t:=\sup_{s\le t}Z^{d}_s$ and  $L$ be the local time at $0$ for $M-Z^{d}$, the reflected process of $Z^{d}$ from the supremum.
For the right-continuous inverse process $L^{-1}$, define the ascending ladder-height process $H_s:= Z^{d}_{L^{-1}_s} = M_{L^{-1}_s}$.
Then the Laplace exponent of $H_s$ is
\begin{equation*}
	\chi(\xi)= \exp\left(\frac{1}{\pi} \int_0^\infty \frac{ \log (\psi(\theta \xi))}{1 + \theta^2} \, d\theta\right) \qquad \text{ for }\xi\ge 0
\end{equation*}
(see, \cite[Corollary 9.7]{MR0400406}).
Define the renewal function $V$ of $H$ as follows:
\begin{equation*}\label{e:defV}
	V(x):= \int_0^{\infty}\Pp(H_s \le x)ds \quad\text{ for }
	x \in \bR.
\end{equation*}
Then $V(x)=0$ for $x<0$, $V(\infty)=\infty$ and $V$ is non-decreasing.  Also $V$ is sub-additive (see \cite[p.74]{MR1406564}), that is,
\begin{equation}\label{subad}
	V(x+y)\le V(x)+V(y) \quad\text{ for } x,y \in \bR.
\end{equation}
Since the distribution of $Z^{d}_t$ is absolutely continuous for every $t>0$, the resolvent measure of $Z^{d}_t$ is absolutely continuous as well, see, \cite[Theorem 6]{MR0341626}.
From \cite[Theorem 2]{MR573292} and \cite[(1.8) and Theorem 1]{MR573292},
we see that $V(x)$ is absolutely continuous and harmonic on $(0,\infty)$ for $Z^{d}_t$, and $V'$ is a positive harmonic function  on $(0,\infty)$ for $Z^{d}_t$. Therefore, $V$ is actually (strictly) increasing.

\medskip

Corresponding to the L\'evy measure $\nu^1(dz)$, the Pruitt function (see \cite{MR632968}) is defined as follows:
$$h(r):=\int_{\bR} \left(1 \wedge {|z|^2}r^{-2}\right)\nu^1(dz) \qquad \mbox{ for  }\,\,r>0,$$
and by \cite[Corollary 3]{BGR14} and \cite[Proposition 2.4]{BGR15}, 
\begin{equation*}\label{compVhp}
	h(r)\asymp [V(r)]^{-2}\asymp \psi(r^{-1})\qquad\mbox{ for }\, r>0.
\end{equation*}
Since $s\to \phi(s^{-1})^{-1}$ satisfies  ({\bf WS}), \eqref{d:nu1} and \cite[Proposition 28]{BGR14} yield that
$$\psi(r)\asymp \phi(r^{-1})^{-1}\qquad \mbox{ for }\, r>0.$$
Combining {the above} observations, we conclude that 
\begin{align}\label{e:com}
	\phi(r)\asymp [V(r)]^2 \qquad \mbox{ and } \qquad 
	\nu^1(r) \asymp [V(r)]^{-2} r^{-1}
	\quad \mbox{ for  } \,\, r>0.
\end{align}
Also ({\bf WS}) condition of $\phi$ yields {the weakly scaling properties} of $V$ as follows: there exists $C_V>1$ such that 
\begin{align}\label{V:sc}
	C_V^{-1} \left(\frac{R}{r}\right)^{\la/2}\le \frac{V(R)}{V(r)}\le C_V \left(\frac{R}{r}\right)^{\ua/2}\qquad \mbox{ for any } 0<r\le R.
\end{align}
By \eqref{e:com} and \eqref{V:sc}, it is clear that
for any $r>0$, there is $c=c(\la, \lC)>0$ such that
\begin{align}\label{eq:Vnu}
	\int_{\{|t|>r\}} V(|t|)\nu^1(|t|)dt\le \frac{c}{V(r)}\quad \text{ and }\quad
	\int_{\{|t|>r\}} \nu^1(|t|)dt\le \frac{c}{[V(r)]^2}.
\end{align}

\medskip

The following proposition is directly obtained by \cite[Theorem 1.1]{TKMR16} 
since $V$ and $V'$ are harmonic on $(0, \infty)$ for $Z^{d}$, and the assumption \cite[{\bf(A)}]{TKMR16} is satisfied by ({\bf SD}).

\begin{proposition}\label{VbisEst} 
	The function $x \to V(x)$ is twice-differentiable  for any $x>0$, and there exists a positive constant $C_{\ref{VbisEst}}$ such that 
	\begin{align*}
		V'(x)\leq C_{\ref{VbisEst}}\frac{V(x)}{x\wedge 1}\qquad \text{and}\qquad
		|V''(x)|\leq C_{\ref{VbisEst}}\frac{V'(x)}{x\wedge 1}\quad \mbox{ for any } x>0.
	\end{align*}
\end{proposition}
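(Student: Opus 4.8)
The plan is to obtain the statement directly from the harmonic-function regularity result \cite[Theorem 1.1]{TKMR16}, applied to the two positive harmonic functions $V$ and $V'$. Recall from the discussion preceding the proposition that $V$ is absolutely continuous and strictly increasing on $(0,\infty)$, and that both $V$ and $V'$ are positive functions harmonic on $(0,\infty)$ for the one-dimensional process $Z^{d}$ (cf. \cite{MR573292}). Hence only two points need to be addressed: first, that $Z^{d}$ --- equivalently, its L\'evy density $\nu^{1}$ --- satisfies the structural hypothesis \cite[\textbf{(A)}]{TKMR16}; and second, that the conclusion of \cite[Theorem 1.1]{TKMR16}, specialized to $h=V$ and then to $h=V'$, is exactly the asserted pair of inequalities together with the twice-differentiability.

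For the first point, I would record that \cite[\textbf{(A)}]{TKMR16} asks the L\'evy density to be of the form $\nu^{1}(|r|)$ with $\nu^{1}\in C^{1}(0,\infty)$ and with $r\mapsto -(\nu^{1})'(r)/r$ non-increasing on $(0,\infty)$, together with the doubling and weak-scaling properties that are already in force here by (\textbf{WS}) (see \eqref{e:com} and \eqref{V:sc}). Since $\nu^{1}(r)=(r\phi(r))^{-1}$ by \eqref{d:nu1}, the hypothesis $\phi\in C^{1}(0,\infty)$ in (\textbf{SD}) gives $\nu^{1}\in C^{1}(0,\infty)$, while the remaining clause of (\textbf{SD}) is literally the monotonicity of $r\mapsto -(\nu^{1})'(r)/r$. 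Thus (\textbf{SD}) and (\textbf{WS}) deliver \cite[\textbf{(A)}]{TKMR16}.

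For the second point, \cite[Theorem 1.1]{TKMR16} yields that any positive function $h$ harmonic on an interval $I\subset(0,\infty)$ is continuously differentiable on $I$ and obeys a gradient bound of the form $|h'(x)|\le c\,h(x)/(x\wedge1)$ for $x\in I$; when the cited estimate is phrased with an oscillation (supremum) term on the right-hand side, I would reduce it to this pointwise form using the weak scaling \eqref{V:sc} --- together with the monotonicity of $V$, so that the supremum of $V$ over a comparable interval is $\asymp V(x)$ --- and, for the positive harmonic function $V'$, the Harnack inequality along a short chain of overlapping intervals. Applying this with $h=V$ gives $V\in C^{1}(0,\infty)$ and $V'(x)\le C_{\ref{VbisEst}}\,V(x)/(x\wedge1)$. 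Since $V'$ is itself a positive harmonic function on $(0,\infty)$, applying the same statement with $h=V'$ gives $V'\in C^{1}(0,\infty)$ --- so $V$ is twice differentiable on $(0,\infty)$ --- and $|V''(x)|=|(V')'(x)|\le C_{\ref{VbisEst}}\,V'(x)/(x\wedge1)$, which is the proposition.

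I expect the only step requiring genuine (if routine) care to be the bookkeeping in the first point: matching the two clauses of (\textbf{SD}) precisely against what \cite[\textbf{(A)}]{TKMR16} demands, and confirming that no regularity of $\phi$ or $\nu^{1}$ beyond (\textbf{WS}) is needed. The truncation at scale $1$ in both bounds simply mirrors the $x\wedge1$ already present in the estimates of \cite{TKMR16}; everything else is an immediate appeal to that reference.
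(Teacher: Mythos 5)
Your proposal is correct and follows essentially the same route as the paper: the paper's proof is exactly the one-line observation that $V$ and $V'$ are positive harmonic functions on $(0,\infty)$ for $Z^{d}$, that assumption \textbf{(A)} of \cite{TKMR16} holds by (\textbf{SD}), and that \cite[Theorem 1.1]{TKMR16} applied to $V$ and then to $V'$ gives both the twice-differentiability and the two stated bounds. Your additional bookkeeping (matching (\textbf{SD}) against \textbf{(A)}, and the Harnack/oscillation remark) only fills in details the paper leaves implicit.
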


\medskip

For any $x=(\wt x, x^d)\in \Rd$, define $w(x)=V(x^d\vee 0)$. 
Then we have the proposition which follows from \cite[Proposition 3.2] {GKK20}. For the reader's convenience, we give the proof.
\begin{proposition}\label{st2}
	For $\lambda>0$, there exists $C_{\ref{st2}}=C_{\ref{st2}}(\lambda, \phi)>0$ such that for any $r>0$  and $i\in\{1,\dots,d\}$, we have
	\begin{align*}
		\sup_{\{x\in \Rd\, :\, 0<x^d\le \lambda r\}}\int_{\{|t|>r\}} w(x+e_it)\nu^1(|t|)dt<  \frac{C_{\ref{st2}}}{V(r)}.
	\end{align*}
\end{proposition}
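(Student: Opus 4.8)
The plan is to split the integral $\int_{\{|t|>r\}} w(x+e_i t)\,\nu^1(|t|)\,dt$ according to whether the perturbation direction $e_i$ affects the last coordinate, since $w(x)=V(x^d\vee 0)$ depends only on $x^d$. When $i\ne d$, the integrand is constant in $t$: $w(x+e_i t)=V(x^d\vee 0)=V(x^d)$ for $0<x^d\le \lambda r$, so the integral equals $V(x^d)\int_{\{|t|>r\}}\nu^1(|t|)\,dt$. By \eqref{eq:Vnu} the integral of $\nu^1$ over $\{|t|>r\}$ is at most $c[V(r)]^{-2}$, and by monotonicity plus the scaling \eqref{V:sc}, $V(x^d)\le V(\lambda r)\le c'\,V(r)$ (the constant depending on $\lambda$ and the upper scaling exponent $\ua/2$, and on $\lC,\uC$). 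Multiplying these bounds gives the claim in this case.

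The case $i=d$ is the substantive one: here $w(x+e_d t)=V((x^d+t)\vee 0)$, which is nonzero (and growing) when $t>-x^d$, in particular for large positive $t$. Write
\begin{align*}
	\int_{\{|t|>r\}} V((x^d+t)\vee 0)\,\nu^1(|t|)\,dt = \int_{t>r} V(x^d+t)\,\nu^1(t)\,dt + \int_{t<-r} V((x^d+t)\vee 0)\,\nu^1(|t|)\,dt.
\end{align*}
For the second integral, $0\le (x^d+t)\vee 0\le x^d\le \lambda r$, so it is bounded by $V(\lambda r)\int_{t<-r}\nu^1(|t|)\,dt\le c\,V(r)\cdot [V(r)]^{-2}=c[V(r)]^{-1}$ exactly as before. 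For the first integral, since $0<x^d\le \lambda r\le \lambda t$ on the range $t>r$, subadditivity \eqref{subad} gives $V(x^d+t)\le V(x^d)+V(t)\le V(\lambda r)+V(t)\le c\,V(r)+V(t)$. The term $c\,V(r)\int_{t>r}\nu^1(t)\,dt$ is controlled by \eqref{eq:Vnu} as above, and the term $\int_{t>r}V(t)\,\nu^1(|t|)\,dt$ is exactly the quantity bounded by $c[V(r)]^{-1}$ in the first inequality of \eqref{eq:Vnu}. Adding the pieces yields the uniform bound $C_{\ref{st2}}/V(r)$.

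The only mild obstacle is bookkeeping: one must check that the supremum over $\{x: 0<x^d\le \lambda r\}$ really is controlled uniformly, but since every appearance of $x^d$ is bounded above by $\lambda r$ and $w$ is monotone in $x^d$, each estimate above is already uniform in $x$, so taking the supremum is harmless. Tracking the dependence of the final constant on $\lambda$ (through $V(\lambda r)/V(r)$, hence on $\ua$ and $\uC$) and on $\phi$ (through the comparison \eqref{e:com} and scaling \eqref{V:sc} of $V$) completes the argument.
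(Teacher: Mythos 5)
Your argument is correct and rests on exactly the same ingredients as the paper's proof: the subadditivity bound $w(x+e_it)\le V(x^d)+V(|t|)$, the tail estimates \eqref{eq:Vnu}, and the scaling \eqref{V:sc} to control $V(\lambda r)\le c\,V(r)$. The paper simply applies the subadditivity bound uniformly in one line, without your (harmless but unnecessary) case distinction on $i\ne d$ versus $i=d$ and on the sign of $t$.
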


\begin{proof}\hspace{-.05in}.
	Since $w(x+z)\le V(x^d)+V(|z|)$ for $x^d>0$ and $z\in \Rd$, it follows that
	\begin{align*}
		\int_{\{|t|>r\}} w(x+e_it)\nu^1(|t|)dt\leq V(x^d)\int_{\{|t|>r\}}\nu^1(|t|)dt+\int_{\{|t|>r\}} V(|t|)\nu^1(|t|)dt.
	\end{align*}
	By \eqref{eq:Vnu} and \eqref{subad},  we conclude that 
	$$\sup_{\{x\in \Rd\, :\, 0<x^d\le \lambda r\}}\int_{\{|t|>r\}} w(x+e_it)\nu^1(|t|)dt\leq c_1 \left(\frac{V(\lambda r)}{[V(r)]^2}+\frac{1}{V(r)}\right)\le \frac{c_2}{V(r)}.$$
\end{proof}

For each $i\in\{1, \ldots, d\}$,
we define an operator $\cL^{(i)}_Z$ by
\begin{align}\label{e:gzi}
	\cL^{(i)}_Z f(x):=\lim_{\varepsilon\downarrow 0}\int_{\{|t|>\eps\}} (f(x+e_i t)-f(x))\nu^1(|t|)dt=:\lim_{\varepsilon\downarrow 0}
	\cL^{(i), \eps}_Z f(x)
	\quad\text{ for } x\in \Rd.
\end{align}
Applying \cite[Theorem 3.3]{GKK20} with  \autoref{VbisEst} and  \autoref{st2}, we have 
the following Theorem.

\begin{theorem}\label{std}
	For any {$x=(\wt x,x^d)\in\bR^d$ with $x^d>0$},  $\cL_Z^{(i)} w(x)$ is well-defined and $\cL_Z^{(i)} w(x)=0$ for any $i\in \{1,\dots,d\}$.
\end{theorem}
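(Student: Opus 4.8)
The plan is to split the statement into the two genuinely different cases $i\neq d$ and $i=d$; the former is essentially trivial, and the latter reduces to the one–dimensional harmonicity of the renewal function $V$, for which \cite[Theorem 3.3]{GKK20} (with \autoref{VbisEst} and \autoref{st2} supplying its hypotheses) is exactly the black box.

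First I would dispose of the case $i\in\{1,\dots,d\}$ with $i\neq d$. Since $w(x)=V(x^d\vee 0)$ depends only on the $d$-th coordinate and $x+e_it$ has the same $d$-th coordinate as $x$, we have $w(x+e_it)=w(x)$ for every $t\in\bR$. Hence the integrand in \eqref{e:gzi} vanishes identically, $\cL^{(i),\eps}_Z w(x)=0$ for every $\eps>0$, so $\cL_Z^{(i)}w(x)$ is well-defined and equals $0$.

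Next, fix $i=d$ and $x=(\wt x,x^d)$ with $x^d>0$. Here $\cL_Z^{(d)}$ acts on $w$ precisely as the principal-value generator of the one-dimensional L\'evy process $Z^d$ applied to $s\mapsto V(s\vee 0)$, evaluated at $s=x^d$. To see the limit in \eqref{e:gzi} exists and is finite, I would split $\{|t|>\eps\}$ into the far part $\{|t|>x^d/2\}$ and the near part $\{\eps<|t|\le x^d/2\}$. On the far part, \autoref{st2} (applied with $i=d$, $\lambda=2$, $r=x^d/2$) gives $\int_{\{|t|>x^d/2\}}w(x+e_dt)\,\nu^1(|t|)\,dt\le C_{\ref{st2}}/V(x^d/2)<\infty$, while $V(x^d)\int_{\{|t|>x^d/2\}}\nu^1(|t|)\,dt<\infty$ by \eqref{eq:Vnu}; this piece is absolutely convergent and carries no $\eps$-dependence. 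On the near part, $x^d+t\in[x^d/2,3x^d/2]\subset(0,\infty)$, so $V$ is $C^2$ there by \autoref{VbisEst}; a second-order Taylor expansion of $V$ around $x^d$, together with the symmetry $\nu^1(|t|)=\nu^1(|{-}t|)$ (which annihilates the first-order term under the symmetric truncation), bounds the integrand by $c\,|t|^2\sup_{[x^d/2,3x^d/2]}|V''|$, and $\int_{\{|t|\le x^d/2\}}|t|^2\nu^1(|t|)\,dt<\infty$ since $\nu^1(dr)$ is a L\'evy measure. Thus the principal value exists, i.e. $\cL_Z^{(d)}w(x)$ is well-defined.

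Finally, to identify the value as $0$ I would invoke that $V$ is harmonic on $(0,\infty)$ for $Z^d$ (recorded earlier in the excerpt via \cite{MR573292}) and that $({\bf SD})$ is exactly the hypothesis \cite[{\bf(A)}]{TKMR16} behind \autoref{VbisEst}; these together with the tail control of \autoref{st2} are the ingredients of \cite[Theorem 3.3]{GKK20}, which identifies the pointwise principal-value quantity $\cL_Z^{(d)}w(x)$ with the generator evaluated on the harmonic function $V$ and hence yields $\cL_Z^{(d)}w(x)=0$ for $x^d>0$. I expect the only delicate point to be this last identification — upgrading harmonicity of $V$ in the probabilistic sense to the vanishing of the pointwise PV integral; \autoref{st2} provides the tail control that makes this legitimate and \autoref{VbisEst} the local $C^2$ control near $x^d$, so everything else is routine.
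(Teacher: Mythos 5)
Your proposal is correct and follows essentially the same route as the paper, which simply invokes \cite[Theorem 3.3]{GKK20} together with \autoref{VbisEst} and \autoref{st2} to conclude that $\cL_Z^{(i)}w(x)=0$ for $x^d>0$. The extra details you supply (the trivial vanishing for $i\neq d$ since $w$ depends only on the $d$-th coordinate, and the far/near splitting for well-definedness) are correct but are exactly the routine content the paper delegates to that citation.
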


\medskip
For the rest of this paper, let $D$ be a $C^{1,1}$ open set in $\bR^d$ with characteristics $(R,\Lambda)$. 
We may assume that  $R<1$ and $\Lambda >2$. For $Q\in\partial D$, consider the coordinate system  $CS_Q$ such that
$$B(Q, R)\cap D=\{(\wt y, y^d)\in B(Q, R) \text{ in } CS_Q: y^d>\varphi_Q(\wt y)\}.$$
Define a function $\rho_{Q}(y):=y^{d}-\varphi_Q(\wt{y})$ in $CS_Q$.
Then 
\begin{align}\label{comp:rho}
	\frac{\rho_Q(y)}{\sqrt{1+\Lambda^2}}\le \delta_D(y)\le \rho_Q(y)\qquad \text{ for any }y\in B(Q ,R)\cap D. 
\end{align}
For $r_1, r_2>0$, let
\begin{align*}
	D_Q(r_1, r_2):=\{y\in D:|\wt{y}| <r_1,\ 0<\rho_Q(y)<r_2\}.
\end{align*}

\medskip

\begin{proposition}\label{g:st1}
	For any $Q\in \partial D$ and $R_0\le  R/2$, we define 
	$$h_{R_0}(y)=h_{Q, R_0}(y):=V(\delta_D(y))\1_{D\cap B(Q, R_0)} (y)\qquad \text{ for }\ y\in \Rd.$$
	Let $r_0:={R_0}/4\Lambda$.
	Then there exists a constant $c=c(\phi,  R, \Lambda)>0$ independent of $Q$ such that for any $i\in\{1,\dots,d\}$, $\cL_{Z}^{(i)} h_{R_0}$ is well-defined in $D_Q(r_0, r_0)$ and 
	\begin{align*}
		|\cL_{Z}^{(i)} h_{R_0}(x)|\le \frac{c}{V(R_0)}\,\qquad\mbox{ for any }x\in D_Q(r_0, r_0). 
	\end{align*}
\end{proposition}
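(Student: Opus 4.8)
The plan is to compare $h_{R_0}$ with the globally-defined function $w$ (after moving to the coordinate system $CS_Q$) for which \autoref{std} already gives $\cL_Z^{(i)}w = 0$, and to control the error coming from (a) the curvature of the boundary, i.e. the difference between $\delta_D$ and the flat half-space distance, and (b) the truncation $\1_{D\cap B(Q,R_0)}$. Concretely, fix $Q\in\partial D$, work in $CS_Q$, and for $x\in D_Q(r_0,r_0)$ split the integral defining $\cL_Z^{(i)}h_{R_0}(x)$ into the near part $\{|t|\le r_0\}$ and the far part $\{|t|>r_0\}$. On the far part, $h_{R_0}(x+e_i t)\le V(\delta_D(x+e_it))\le V(\delta_D(x)) + V(|t|)$ by subadditivity \eqref{subad} and monotonicity of $V$, while $h_{R_0}(x)= V(\delta_D(x))$; since $\delta_D(x)\le \rho_Q(x)< r_0$, \autoref{st2} (with $\lambda$ chosen so that $\lambda r_0\ge r_0$, and using $\delta_D(x)\le c\,r_0$) together with \eqref{eq:Vnu} bounds the far part by $c/V(r_0)\le c'/V(R_0)$, using the scaling \eqref{V:sc} to absorb the factor $(R_0/r_0)^{\ua/2} = (4\Lambda)^{\ua/2}$, which depends only on $R,\Lambda$.

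For the near part $\{|t|\le r_0\}$ I would reduce to the already-solved flat case. The point $x+e_it$ with $|t|\le r_0$ stays in $D\cap B(Q,R_0)$ (since $x\in D_Q(r_0,r_0)$ and $r_0=R_0/4\Lambda$ with $\Lambda>2$, the ball $B(x,r_0)$ does not leave $B(Q,R_0)$; one must check it also does not leave $D$ in the relevant region, or handle the piece where $x+e_it\notin D$ as part of the error), so the indicator is $1$ there and $h_{R_0}(x+e_it)=V(\delta_D(x+e_it))$. Now introduce $g(y):=V(\rho_Q(y)\vee 0)$. Because $\varphi_Q$ is $C^{1,1}$ with $\varphi_Q(0)=0$, $\nabla\varphi_Q(0)=0$, $\|\nabla^2\varphi_Q\|_\infty\le\Lambda$, one has $|\rho_Q(y) - (\text{flat half-space distance})|\le c\Lambda|\wt y|^2$-type control, and more to the point $\rho_Q$ restricted to a coordinate line $t\mapsto (x+e_it)$ is, for $i<d$, a $C^{1,1}$ function of $t$ with first and second derivatives bounded by $c(\Lambda)$, while for $i=d$ it is affine in $t$. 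Thus along each coordinate direction $g(x+e_i t)$ agrees with $V$ of a $C^{1,1}$ (or affine) perturbation of the linear profile $t\mapsto V((x^d-\varphi_Q(\wt x)) + (\text{linear in }t))$; after translating to the half-space coordinate $u^d=\rho_Q(x)$ one can invoke exactly the estimate behind \autoref{std}/\cite[Theorem 3.3]{GKK20} for $w$ plus the derivative bounds of \autoref{VbisEst} to see that $\cL_Z^{(i),\,r_0}$ applied to $g$ at $x$ differs from $\cL_Z^{(i)}w$ at the corresponding half-space point by an amount controlled by $\int_{\{|t|\le r_0\}} (|t|^2\wedge 1)\,\nu^1(|t|)\,dt$ times $\sup |(V\circ(\cdot))''|$, which by \autoref{VbisEst} and $\delta_D(x)\ge$ (distance comparison) is $\le c\,V'(\delta_D(x))/\delta_D(x)\le c\,V(\delta_D(x))/\delta_D(x)^2$; one then uses $\delta_D(x)\le r_0\le R_0$ and \eqref{V:sc} to convert $V(\delta_D(x))/\delta_D(x)^2$-type bounds, after the cancellation against $\cL_Z^{(i)}w = 0$, into $c/V(R_0)$. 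Finally, the remaining discrepancy $h_{R_0}(x+e_it) - g(x+e_it) = V(\delta_D(x+e_it)) - V(\rho_Q(x+e_it)\vee 0)$ is, by \eqref{comp:rho} and the mean value theorem, bounded by $V'(\cdot)$ times $|\delta_D - \rho_Q|$; but $\delta_D$ and $\rho_Q$ are comparable \eqref{comp:rho} and both $C^{1,1}$-controlled near $\partial D$, so this too contributes at most $c/V(R_0)$ after integrating against $(|t|^2\wedge 1)\nu^1(|t|)\,dt$ and using \autoref{VbisEst} plus \eqref{V:sc}.

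Assembling the two parts gives the claimed bound $|\cL_Z^{(i)}h_{R_0}(x)|\le c/V(R_0)$ with $c$ depending only on $\phi, R, \Lambda$ (through $\underline\alpha,\overline\alpha,\lC,\uC,C_V,C_{\ref{VbisEst}}$ and the factor $(4\Lambda)^{\ua/2}$), uniformly in $Q$; well-definedness of $\cL_Z^{(i)}h_{R_0}$ in $D_Q(r_0,r_0)$ follows from the same decomposition, since the far part is absolutely convergent by \eqref{eq:Vnu} and the near part is a convergent principal value by the $C^{1,1}$-regularity of $h_{R_0}$ along coordinate lines together with \autoref{VbisEst}. The main obstacle is the near-part estimate: one has to carefully pass from the curved set $D$ to the flat half-space so that the already-established identity $\cL_Z^{(i)}w=0$ can be subtracted off, keeping track that all the curvature errors — coming both from $\rho_Q$ vs.\ the genuine half-space distance along coordinate lines, and from $\delta_D$ vs.\ $\rho_Q$ — are uniformly $O(1/V(R_0))$ and do not secretly depend on $Q$ or on how close $x$ is to $\partial D$; the choice $r_0 = R_0/4\Lambda$ is exactly what makes the localization ball stay inside $B(Q,R_0)\cap D$ so that only these controllable errors appear.
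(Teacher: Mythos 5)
Your overall skeleton is the same as the paper's: subtract a half-space profile whose generator vanishes by \autoref{std}, estimate the far range $\{|t|\ge r_0\}$ by subadditivity, \eqref{eq:Vnu} and \autoref{st2} (this part of your argument is fine and matches the paper's term $\I$), and treat the near range as a curvature error. But the near-range treatment, which is the heart of the proof, has two genuine gaps. First, you base the comparison at $Q$ and use $\rho_Q$: the only control available there is \eqref{comp:rho}, which is first-order comparability, not second-order closeness, so the discrepancy $V(\delta_D(y))-V(\rho_Q(y))$ can be of size comparable to $V(\rho_Q(y))$ itself; since $\int_0^{r_0}V(|t|)\nu^1(|t|)\,dt=\infty$ (the integrand is $\asymp (|t|V(|t|))^{-1}$), such an error is not integrable against $\nu^1$, and your closing claim that it "contributes at most $c/V(R_0)$" does not follow. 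The paper instead fixes the nearest boundary point $x_0$ to $x$, works in $CS_{x_0}$, and uses tangency together with the interior and exterior ball conditions to get the quadratic estimate $|y^d-\delta_D(y)|\le c|\wt y|^2$ (see \eqref{eq:dis}); this second-order closeness is exactly what makes the near-range integral converge. Second, your bound of the near part by $\int_{\{|t|\le r_0\}}|t|^2\nu^1(|t|)\,dt$ times $\sup|(V\circ\cdot)''|\lesssim V(\delta_D(x))/\delta_D(x)^2$ is not uniform: it blows up as $\delta_D(x)\to 0$, and along the segment the relevant distance can be much smaller than $\delta_D(x)$ (even reach $0$), so the supremum is in general infinite; there is no remaining cancellation against $\cL_Z^{(i)}w=0$ to absorb this. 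The paper avoids any use of $V''$ on the near range: it splits $\{|t|<r_0\}$ into the collar $A$ (where the line may leave $D$ or the indicator switches — note your assertion that $x+e_it$ stays in $D\cap B(Q,R_0)$ for $|t|\le r_0$ is false near $\partial D$, and this piece, the paper's term $\II$, needs its own crude estimate via $V(|\wt y_t|)$ and the smallness of $A$) and the good region $E$, where it applies the mean value theorem with $V'$ bounded uniformly by $V(r_0)/r_0$ through the scale-invariant Harnack inequality applied to the harmonic function $V'$; \autoref{VbisEst} alone would only give $V'(u)\le CV(u)/u$, which degenerates at small $u$, so the Harnack step is not optional.

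In short, the strategy is right, but to make it work you must (i) recenter at the nearest boundary point so that the flat profile agrees with $V(\delta_D)$ to second order, and (ii) replace the Taylor/second-derivative estimate by the paper's decomposition into $A$ and $E$ with a first-order (mean value plus Harnack for $V'$) argument on $E$.
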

\begin{proof}\hspace{-.05in}.
	For any $x\in D_Q(r_0, r_0)$, let $x_0\in \partial D$ satisfy $\delta_D(x)=|x-x_0|$.
	Let
	$\varphi:=\varphi_{x_0}$ be the $C^{1,1}$ function and $CS:=CS_{x_0}$ be an orthogonal coordinate system with $x_0$ so that 
	$$\varphi(\widetilde 0)=0\,,\ \nabla\varphi(\widetilde 0)=(0, \dots, 0)\,,\ \| \nabla \varphi \|_\infty \leq \Lambda\,,\  |\nabla \varphi (\widetilde y)-\nabla \varphi (\widetilde z)| \leq \Lambda |\widetilde y-\widetilde z|$$
	$$x=(\widetilde{0}, x^d)\  \text{ and }\
	D\cap B({x_0}, R)=\{y=(\widetilde{y}, y^d)  \in B(0, R) \mbox{ in } CS: y^d>\varphi(\widetilde{y})\}.$$
	We fix the function $\varphi$ and the coordinate system $CS$, and we define a function 
	$$g_x(y)=V(\delta_{{\mathbb H}}(y))\1_{D\cap B(Q, R_0)} (y)=V(y^d)\1_{D\cap B(Q, R_0) }(y)\quad \text { for } y\in \bH $$ 
	where ${\mathbb H}=\{y=(\widetilde{y}, y^d) \mbox{  in } CS :y^d>0\}$ is the half space in $CS$. 
	Note that  $ h_{R_0}(x)=g_x(x)=V(x^d)$ and  that $ \cL^{(i)}_Z (h_{R_0}- g_x)=\cL^{(i)}_Z h_{R_0}$ by \autoref{std}. Therefore it suffices to show that $\cL^{(i)}_Z (h_{R_0}- g_x)$ is well defined, and
	that there exists a constant $c_1=c_1(\phi,  R, \Lambda)>0$ such that 
	\begin{align}\label{e:claims}
		\int_{\{t\in\bR : x+e_i t\in (D\cup {\mathbb H})\}}|h_{R_0}(x+e_i t)-g_x(x+e_i t)|\nu^1(|t|) dt\le c_1 V(R_0)^{-1}.
	\end{align}	
	
	Define $\widehat \varphi :B(\widetilde{0}, R_0)\to \bR$ by $\widehat \varphi(\widetilde z):=2\Lambda|\widetilde z|^2$.
	Since $\varphi (\widetilde{0})=0$,  the mean value theorem implies that 
	$|\varphi(\widetilde{z})|\le \Lambda|\wt z|^2\le \widehat \varphi(\widetilde{z})$  for any ${z=(\wt z,z^d)}\in D\cup \bH$.
	Let 
	$$A:=\{  z\in D\cup {\mathbb H}: |\wt z|<r_0,\ |z^d|\le \widehat{\varphi}(\widetilde{z})\}\ \text{ and }\ E:=\{ z\in D:|\wt z|<r_0,\ \widehat{\varphi}(\widetilde{ z})<z^d< 3r_0\Lambda \}.$$
	Then for any $z\in D\cap B(x, r_0)\backslash A$,   since  $x\in D_Q(r_0, r_0)$,
	\begin{align*}
	 \widehat{\varphi}(\widetilde{ z})<	z^d\le |z^d-x^d|+|x^d|\le  r_0+\delta_D(x)\le r_0+ \sqrt{2}r_0\le 3r_0,
	\end{align*}
	so that $z\in E$.
	Also
	for any $z\in E$, since $ Q\in \partial D$ and $x_0\in D_Q(r_0, r_0)$,
	\begin{align*}
		|z-Q|&\le |z-x_0|+|x_0-Q|\le \sqrt{|z^d|^2+|\wt z|^2}+\sqrt{2} r_0\le\sqrt{ (3r_0\Lambda)^2+r_0^2}+\sqrt{2}  r_0\le4\Lambda r_0=
		R_0
	\end{align*}
	so that $z\in D\cap B(Q, R_0)$. Therefore, 
	$$D\cap B(x, r_0)\backslash A\subset E\subset D\cap B(Q, R_0).$$
	Now we decompose \eqref{e:claims} as follows:
	\begin{align*}
		&\int_{\{|t|\ge r_0\}}|h_{R_0}(x+e_it)-g_x(x+e_it)|\nu^1(|t|) dt
		+\int_{\{|t|<r_0\}}
		|h_{R_0}(x+e_it)-g_x(x+e_it)|\nu^1(|t|) dt\\
		\le &\int_{\{|t|\ge r_0\}}(h_{R_0}(x+e_it)+g_x(x+e_it))\nu^1(|t|) dt+\int_{\left\{x+e_it\in A\right\}}
		(h_{R_0}(x+e_it)+g_x(x+e_it))\nu^1(|t|) dt\nn\\
		&+\int_{\{x+e_it\in E\}}
		|h_{R_0}(x+e_it)-g_x(x+e_it)|\nu^1(|t|) dt=:\ \I+\II+\III.
	\end{align*}	
For any fixed $x\in D_Q(r_0, r_0)$, we denote $y_t:=x+e_i t, t>0$.

In $CS$, let $\Pi$ be the hyperplane tangent to $\partial D$ at $x_0$. Consider a function $\Gamma:\bR^{d-1}\to \bR$ defined by $\Gamma(\wt y):=\nabla \varphi(\wt x)\cdot \wt y$ which describes the plane $\Pi$. 
Let $\nabla \varphi(\wt x):=(\alpha^1, \ldots, \alpha^d)$ 
and  $ \theta_1$ be the angle between the normal vector $\nabla \varphi(\wt x)$ of the hyperplane $\Pi$  and $y_t-x=e_it$. Then  since
$\alpha^it=\langle \nabla \varphi(\wt x), e_i t\rangle
=|\nabla \varphi(\wt x)||t| \cos \theta_1$, $|\cos \theta_1|=\frac{|\alpha^i|}{|\nabla \varphi(\wt x)|}$ and
\begin{align}\label{e:new}
	|\wt y_t|=|t\sin\theta_1|\qquad\text{and }\qquad |y^d-x^d|=|t\cos\theta_1 | 
\end{align}

	Now we estimate $\I, \II$ and $\III$ as follows:\\
	{$\bf I$:}
	Let $|t|\ge r_0$.
	Since $R_0={4\Lambda r_0\leq 4\Lambda|t|}$,
	by the definition of $h_{R_0}$ with \eqref{subad}, \eqref{eq:Vnu} and \autoref{st2}, we have
	\begin{align}\label{e:I}
		\I&\le  c_2	\int_{\{|t|\ge r_0\}} V(|t|)\nu^1(|t|)dt +\sup_{x\in\Rd:0<x^d<r_0}\int_{\{|t| \ge r_0\}} g_x(y_t) \nu^1(|t|)dt\\
		&\le  c_3 V(r_0)^{-1}+\sup_{x\in\Rd:0<x^d<r_0}\int_{\{|t| \ge r_0\}} w(x+e_it)\nu^1(|t|)dt\le (c_3 +C_{\ref{st2}})V(r_0)^{-1}.	\nn
	\end{align}
{$\bf II$:}
For $y_t=x+e_i t\in A$, that is, $\theta_1\in (0, \pi)$, hence $|\wt y_t|\neq 0$. Since $|y_t^d|\vee \delta_D(y_t)\le 2|\widehat\varphi(\wt y_t)|
	\le 4\Lambda |\wt y_t|\le 4\Lambda |t|$ by \eqref{e:new} and  $V$ is increasing,
	$$h_{R_0}(y_t)+g_x(y_t)\le c_4 V(|\wt y_t|) \le c_4V(|t|).$$
	For the surface measure $m_{d-1}(dy)$ on $\bR^{d-1}$,
	$ m_{d-1}\big(\{y_t\in \bR^d: |\wt y_t|= s |\sin\theta_1|,  |y_t^d|\le 2\Lambda |\wt y_t|^2\}\big)\le c_5( s|\sin\theta_1|)^d\le c_5s^d$. Then by \eqref{e:com} and \eqref{V:sc}
	\begin{align}\label{e:II}
		\II&\le    c_4\int_{\left\{y_t\in A\right\}}
		V(| \wt y_t|)\nu^1(|t|) dt
		\le  c_6  \int_{0}^{r_0} \frac{V(s)}{[V(s)]^2s}\cdot  s^d ds
		= c_6
		\int_{0}^{r_0} \frac{s^{d-1}}{V(s)}ds  \nn\\
		&
		\le c_7\frac{r_0^{\ua/2}}{V(r_0)}\int_{0}^{r_0} s^{d-1-\frac{\ua}{2}}ds
		\le \frac{c_7}{(d-\frac{\ua}{2})}\frac{1}{V(r_0)}.
	\end{align}
	{$\bf III$:}
	For $y_t\in E\subset D$, 
	we first note that
	\begin{align}\label{eq:dis}
		|y_t^d-\delta_D(y_t)|\le c_8|\wt y_t|^2.
	\end{align}
To obtain this, we consider the cases $y_t^d=\delta_{{\mathbb H}}(y_t)\le \delta_D(y_t)$  and $\delta_D(y_t)\le y_t^d=\delta_{{\mathbb H}}(y_t)$  separately.
	If $0<y_t^d=\delta_{{\mathbb H}}(y_t)\le \delta_D(y_t)$,  by the outer ball condition
	\begin{align*}
		\delta_D(y_t)-y_t^d&\le \Big(\sqrt{|\wt y_t|^2+(R+y_t^d)^2}-R\Big)-y_t^d=\frac{|\wt y_t|^2}{\sqrt{|\wt y_t|^2+(R+y_t^d)^2}+(R+y_t^d)}\le \frac{|\wt y_t|^2}{R}.
	\end{align*}
	Similarly, if $y_t^d=\delta_{{\mathbb H}}(y_t)\ge \delta_D(y_t)$,
	the interior ball condition implies that
	\begin{align*}
		y_t^d- \delta_D(y_t)&\le y_t^d -\Big(R-\sqrt{|\wt y_t|^2+(R-y_t^d)^2}\Big)=\frac{|\wt y_t|^2}{\sqrt{|\wt y_t|^2+(R-y_t^d)^2}+(R-y_t^d)}\le \frac{|\wt y_t|^2}{R/2}.
	\end{align*}
 For $|t|\le r_0$, by \eqref{e:new}, since
$|\wt y_t|\le r_0$ and $|y_t^d|\le |x^d|+|t|\le  2r_0$, 
	$$\delta_D(y_t)^2\le |y_t-x_0|^2= |\wt y_t|^2+|y_t^d|^2\le  5r_0^2.$$
Using the above observation with the scale invariant Harnack inequality for $Z^{d}$(see, \cite[Theorem 1.4]{CKK09}) applied to $V^{\prime}$,  \autoref{VbisEst} and \eqref{V:sc}, we have that
	for $|t|\le r_0$,
	\begin{align}\label{e:E}
		\sup_{u\in [y_t^d\wedge \delta_D(y_t), \ y_t^d\vee \delta_D(y_t)]} V'(u)
		&\le \sup_{u\in [y_t^d\wedge \delta_D(y_t),  \sqrt 5 r_0]} V'(u)
		\le  c_9 \inf_{u\in [y_t^d\wedge \delta_D(y_t),  \sqrt 5 r_0]} V'(u)\nn\\
		&\le c_9 V'(\sqrt 5 r_0)\le c_{10}{V(r_0)}/{r_0}.
	\end{align}
Combining \eqref{eq:dis}--\eqref{e:E}, the mean value theorem yields that  for $y_t\in E$,
	\begin{align*}
		|h_{R_0}(y_t)-g_x(y_t)|\le 
		\sup_{u\in [y_t^d\wedge \delta_D(y_t), \,\, y_t^d\vee \delta_D(y_t)]} V'(u)| y_t^d-\delta_D(y_t)|\le c_{11} V\left(r_0\right) |t|^2/r_0.
	\end{align*}
Therefore, by \eqref{e:com}--\eqref{V:sc} with the fact that $r_0<1$, 
\begin{align}\label{e:III}
\III \le&\  c_{11}\frac{V(r_0)}{r_0}\int_0^{r_0} \frac{s}{[V(s)]^2}ds
\le  \frac{c_{12} r_0^{\ua-1}}{V(r_0)}\int_0^{r_0} s^{1-\ua} ds
\le \frac{c_{13}r_0}{V(r_0)}\le \frac{c_{13}}{V(r_0)}.
\end{align}
Hence, we conclude our claim by \eqref{e:I}, \eqref{e:II} and \eqref{e:III} using the definition of $r_0$ and \eqref{V:sc}.
\end{proof}

\medskip
\subsection{Estimates on exit distribution for $X$}\label{subsec:exit}
{Recall that $X$ is a conservative Feller process with the jumping kernel $J(x,y)$ defined in \eqref{a:J} satisfying the conditions  ({\bf SD}) and ($\mathbf K_\eta$).} For each $i\in \{1, \ldots, d\}$, define operators $\cL^{(i)}$ and $\cL$ on $C^2_c(\Rd)$ as
\begin{align*}
	\cL^{(i)} f(x):=
	\lim_{\eps\downarrow 0}
	\int_{\{|t|>\eps\}} \big(f(x+e_i t)-f(x)\big)J(x, x+e_i t)dt
	=: \lim_{\varepsilon\downarrow 0}\cL^{(i)}_{\eps}f(x)
\end{align*}
and $\cL f(x):=\sum_{i=1}^d\cL^{(i)} f(x)$.
For any $i\in\{1,\dots,d\}$, $g\in C_c^2(\Rd)$ and $0<\varepsilon<r<1$,  we have
\begin{align*}
	\cL^{(i)}_{\eps} g(x)
	& = \kappa(x,x)\int_{\{\varepsilon<|t|<r\}} \big(g(x+e_i t)-g(x)-(e_it)\cdot\nabla g(x)\big)\nu^1(|t|)dt\\
	&+\int_{\{\varepsilon<|t|<r\}}\big(g(x+e_i t)-g(x)\big)\big(\kappa(x,x+e_i t)-\kappa(x,x)\big)\nu^1(|t|)dt\\
	&+\int_{\{r\le|t|\}}\big(g(x+e_i t)-g(x)\big)J(x, x+e_i t)dt.
\end{align*}
For $x\in \Rd$ and $|t|\le1$, by \textbf{(K$_{\eta}$)}, we first obtain that
\begin{align*}
	|(g(x+e_i t)-g(x))(\kappa(x,x+e_i t)-\kappa(x,x))|&\le ||g||_{\infty}\kappa_1|t|^{\eta+1}.
\end{align*}
Also ({\bf WS}) with the fact that $ 1\ge \eta> \ua/2> \ua-1$ implies that for $0<r<1$,
\begin{align}\label{eq:nub}
	\int_{\{|t|<r\}}|t|^{2} \nu^1(|t|)dt\le \int_{\{|t|<r\}}|t|^{\eta+1} \nu^1(|t|)dt<\infty.
\end{align}
Therefore, $\cL^{(i)} g$ is well defined and $\cL^{(i)}_{\eps} g$ converges to $\cL^{(i)} g$ uniformly on $\Rd$.
Furthermore, for every $0<r<1$,
\begin{align}\label{eq:cLi}
	\cL^{(i)} g(x)&=\kappa(x,x)\int_{\{|t|<r\}}\big(g(x+e_i t)-g(x)-(e_it)\cdot\nabla g(x)\big)\nu^1(|t|)dt\nn\\
	&+\int_{\{|t|<r\}}\big(g(x+e_i t)-g(x)\big)\big(\kappa(x,x+e_i t)-\kappa(x,x)\big)\nu^1(|t|)dt\\
	&+\int_{\{r\le|t|\}}\big(g(x+e_i t)-g(x)\big)J(x, x+e_i t)dt.\nn
\end{align}

The following proof is almost the same as that of \cite[Lemma 2.2]{GKK20}.

\begin{lemma}\label{Dyk_F}
	For any $g\in C^2_c(\Rd)$ and $x\in \bR^d$, there exists a $\Pp_x$-martingale $M^g_t$ with respect to the filtration of $X$ such that 
	$$M^g_t=g(X_t)-g(X_0)-\int_0^{t}  {\cL} g(X_s) ds\quad \text{ 	$\Pp_x$-a.s.} $$
	In particular, for any stopping time $S$ with $\E_x [S]<\infty$,
	we have
	\begin{align}\label{e:SnS}
		\E_x \left[ g(X_{S})\right]-g(x)=\E_x\left[\int^{S}_0\cL g(X_s)ds\right].
	\end{align}
\end{lemma}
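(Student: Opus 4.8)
The statement is the standard Dynkin-type martingale formula for the jump process $X$ associated with the generator $\cL$, and I would follow the template of \cite[Lemma 2.2]{GKK20}. The plan is to first show that for $g\in C_c^2(\Rd)$ the function $\cL g$ is bounded and continuous on $\Rd$; this was essentially established above via \eqref{eq:nub} and the decomposition \eqref{eq:cLi}, since each of the three integrals in \eqref{eq:cLi} is controlled uniformly in $x$ using $\|g\|_\infty$, $\|\nabla g\|_\infty$, $\|\nabla^2 g\|_\infty$, condition $\mathbf{(K_\eta)}$, and the tail bound \eqref{e:large}. Consequently $\cL g\in C_b(\Rd)$, and in particular $s\mapsto \cL g(X_s)$ is a bounded progressively measurable process, so the integral $\int_0^t \cL g(X_s)\,ds$ makes sense $\Pp_x$-a.s.

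Next I would identify $\cL$ with (a restriction of) the Feller generator of $X$. Since $X$ is the conservative Feller process associated with the Dirichlet form $(\cE,\cF)$ from \cite[Theorem 1.1]{KW22}, and since for $u,g\in C_c^2(\Rd)$ one can integrate by parts in $\cE(u,g)$ to recognize $\cE(u,g) = -\langle u, \cL g\rangle$ (using the symmetry of $J$ and Fubini, justified by the integrability \eqref{eq:nub} and \eqref{e:large}), it follows that $C_c^2(\Rd)$ lies in the $L^2$-domain of the generator and that the generator acts on it as $\cL$. A standard argument (e.g. approximating $g$ and using the Feller property, or invoking \cite[Theorem 3.3]{GKK20}-type results already cited) upgrades this to: $g$ belongs to the domain of the \emph{Feller} generator with $\cL g$ as its image. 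Then the classical Dynkin formula for Feller processes gives that
\[
M^g_t := g(X_t)-g(X_0)-\int_0^t \cL g(X_s)\,ds
\]
is a $\Pp_x$-martingale with respect to the (augmented) natural filtration of $X$, for every starting point $x$.

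Finally, to obtain \eqref{e:SnS} I would apply the optional stopping theorem. For a stopping time $S$ with $\E_x[S]<\infty$, the martingale $M^g$ has bounded jumps and, because $\cL g$ is bounded, $|M^g_{t\wedge S}|\le 2\|g\|_\infty + \|\cL g\|_\infty\, S$, which is $\Pp_x$-integrable; hence $\{M^g_{t\wedge S}\}_{t\ge0}$ is uniformly integrable and optional stopping yields $\E_x[M^g_S]=\E_x[M^g_0]=0$, i.e.
\[
\E_x\big[g(X_S)\big]-g(x)=\E_x\Big[\int_0^S \cL g(X_s)\,ds\Big],
\]
where the right-hand side is finite by $|\int_0^S \cL g(X_s)\,ds|\le \|\cL g\|_\infty\,\E_x[S]<\infty$ together with Fubini.

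The only genuinely delicate point is the identification of $\cL|_{C_c^2}$ with the Feller generator of $X$ — i.e. verifying that the pointwise/uniform limit defining $\cL^{(i)}$ in \eqref{e:gzi}–\eqref{eq:cLi} really coincides with the infinitesimal generator rather than merely an extended generator. This is handled exactly as in \cite[Lemma 2.2]{GKK20}: one checks $\tfrac1t(P_t g - g)\to \cL g$ uniformly using the heat kernel bound \eqref{eq:hke} to control the ``large jump'' and ``far from $x$'' contributions and the second-order Taylor estimate plus \eqref{eq:nub} for the ``small jump'' part. Everything else is routine, so I would simply cite \cite[Lemma 2.2]{GKK20} for this step and record the resulting formula, as the excerpt already signals (``The following proof is almost the same as that of \cite[Lemma 2.2]{GKK20}'').
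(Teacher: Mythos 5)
Your overall skeleton (boundedness of $\cL g$, identification of $\cL$ with a generator of $X$, Dynkin's formula, optional stopping) matches the paper, and your final optional-stopping step is exactly the paper's. But there is a genuine gap at the step you yourself flag as delicate: you claim that $g\in C_c^2(\Rd)$ lies in the domain of the \emph{Feller} generator, i.e.\ that $t^{-1}(P_tg-g)\to\cL g$ uniformly, and propose to verify this "exactly as in \cite[Lemma 2.2]{GKK20}" using the two-sided heat kernel bound \eqref{eq:hke}. That is not what the cited lemma (or this paper's proof) does, and the two-sided bounds alone cannot deliver it: $P_tg(x)-g(x)=\int p(t,x,y)\,(g(y)-g(x))\,dy$ involves a sign-changing integrand whose first-order part must cancel by approximate symmetry of $p(t,x,\cdot)$ near $x$, and upper/lower comparability of $p$ with the model kernel gives no such cancellation, so one cannot extract the small-time asymptotics $P_tg-g\sim t\,\cL g$ uniformly from \eqref{eq:hke}. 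Your Dirichlet-form identification is also incomplete as stated: proving $\cE(u,g)=-\langle u,\cL g\rangle$ only for $u\in C_c^2(\Rd)$ does not place $g$ in the $L^2$-domain $\cD(\cA)$; one needs the identity against all $u\in\cF$ (or a core, plus $\cL g\in L^2$ and an $\cE_1$-density argument), and in any case this only yields the $L^2$-generator, not the Feller one. The reference to \cite[Theorem 3.3]{GKK20}-type results is off target, since that result concerns $\cL_Z^{(i)}w=0$ for the renewal function, not generator domains.

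The paper's route avoids the Feller-generator claim entirely, and this is precisely the trick you are missing. It shows (following \cite[Proposition 2.5]{MR2365348}) that $C_c^2(\Rd)\subset\cD(\cA)$ with $\cA|_{C_c^2}=\cL|_{C_c^2}$ for the $L^2$-generator $\cA$, so that $T_tg-g=\int_0^tT_s\cL g\,ds$ holds for a.e.\ $x$; it then proves the quantitative bound \eqref{cLig} to get $\|\cL g\|_\infty<\infty$, and upgrades the a.e.\ identity to every $x\in\Rd$ by writing $\cH_t=\cH_\eps+T_\eps\cH_{t-\eps}$ and using the \emph{strong Feller} property of $T_\eps$ together with $|\cH_\eps|\le\eps\|\cL g\|_\infty$ to see that $\cH_t(x)=\int_0^tT_s\cL g(x)\,ds$ is continuous. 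The martingale property of $M^g_t$ then follows directly from the Markov property applied to this pointwise identity, with no need for uniform convergence of $t^{-1}(P_tg-g)$. If you replace your second and "delicate point" paragraphs by this $L^2$-plus-strong-Feller argument, the rest of your proposal (boundedness of $\cL g$ via \eqref{eq:cLi}, \eqref{eq:nub}, \eqref{e:large}, and optional stopping using $|M^g_{t\wedge S}|\le 2\|g\|_\infty+\|\cL g\|_\infty S$) goes through as written.
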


\begin{proof}\hspace{-.05in}.
	Let $(\cA, \cD(\cA))$ be the $L^2$-generator of the semigroup $T_t$ with respect to $X$.
	By the definition of $\cL$, we could apply the similar proof as that of \cite[Proposition 2.5]{MR2365348} to obtain that $C_c^2(\Rd) \subset \cD(\cA)$ and $\cA|_{C_c^2(\Rd)}=\cL |_{C_c^2(\Rd)}$.
	Since $T_t$ is strongly continuous (see e.g., \cite[Theorem 1.3.1 and Lemma 1.3.2]{MR569058}),
	for any $f\in \cD(\cA)$ and $t\geq 0$,
	$$\left|\left|(T_tf-f)-\int^t_0T_s\cA f\ ds\right|\right|_{L^2}=0$$
	(see e.g. \cite[Proposition 1.5]{MR838085}). 
	Hence for $g\in C_c^2(\Rd)$, 
	\begin{equation}\label{DF1}
		T_tg(x)-g(x)=\int^t_0T_s \cL g(x)ds,\quad a.e.\,\, x\in \Rd.
	\end{equation}
	{Note that by \eqref{d:nu1}, \eqref{a:kappa}, \textbf{(K$_{\eta}$)} and \eqref{e:large}, the absolute value of \eqref{eq:cLi} is bounded as follows:
		\begin{align}\label{e:Li}
			|\cL^{(i)} g(x)|\le  c_1\Big(\kappa_0 ||\partial^2 g||_\infty \int_0^r \frac{s}{\phi(s)} ds &+\kappa_1||\nabla g||_\infty\int_0^r \frac{s^{\eta}}{\phi(s)} ds\Big) + 2 ||g||_\infty\frac{c_2}{\phi(r)}.
		\end{align}
		Similar as \eqref{eq:nub}, by ({\bf WS}) with the fact that $\eta>  \ua/2> \ua-1$, we have that
		\begin{align*}
			&\int_0^r \frac{s}{\phi(s)} ds\le c_3 \frac{r^2}{\phi(r)}\quad \text{ and }	\quad	\int_0^r \frac{s^{\eta}}{\phi(s)} ds\le c_4 \frac{r^{\eta+1}}{\phi(r)}.
		\end{align*}
		Applying these inequalities to  \eqref{e:Li}, we conclude that there exists a constant $c=c(\phi,  \eta, \kappa_0, \kappa_1)>0$ such that for any
		function  $g\in C_c^2(\Rd)$ and $0<r<1$,
		\begin{equation}\label{cLig}
			||\cL^{(i)} g||_\infty\leq \frac{c}{\phi(r)}\left( r^2||\partial^2 g||_\infty+r^{\eta+1}||\nabla g||_\infty+||g||_\infty\right).
		\end{equation} 
		Thus, $\cL g=\sum_{i=1}^d\cL^{(i)} g(x)$ is bounded.}

	Let ${\cH}_t(x):=\int^t_0T_s\cL g(x)ds$,
	then ${\cH}_t(x)\leq t||\cL g||_\infty$ which is bounded for fixed $t>0$.
	Since $T_\varepsilon$, $\varepsilon>0$, is strong Feller, 
	$T_\varepsilon {\cH}_{t-\varepsilon}\in C_b(\Rd)$, $\varepsilon\in (0, t)$. Also since	$$|{\cH}_t(x)-T_\varepsilon {\cH}_{t-\varepsilon}(x)|=|{\cH}_\varepsilon(x)|\leq \varepsilon ||\cL u||_\infty,$$
	${\cH}_t$ is continuous and  therefore \eqref{DF1} holds for any $x\in\Rd$. 
	Therefore, Markov property implies that
	$$M^g_t=g(X_t)-g(X_0)-\int^t_0\cL g(X_s)ds$$ 
	is $\Pp_x$-martingale for any $x\in\Rd$. 
	Since $|M^g_t|\leq 2||g||_\infty+t||\cL g||_\infty$,  \eqref{e:SnS} follows by the optional stopping theorem.
\end{proof}

\medskip
The following proof is almost the same as that of \cite[Lemma 2.3]{GKK20}.
\begin{lemma}\label{st5}
	There exists a constant $c=c(\phi,  \eta,  \kappa_0, \kappa_1, d )>0$ such that, for any $r \in (0,1)$, $Q\in\partial D$, and any stopping time $S$ (with respect to the filtration of $X$), we have
	\begin{equation}\label{eq:st5-1}
		\mathbb{P}_x\left(|X_{S}-Q|\ge r\right) \,\le\, c \,\frac{ \E_x[S]}{\phi(r)}
		\qquad \text{ for }x \in  B(Q, r/2).
	\end{equation}
	In particular, there exists a constant $C_{\ref{st5}}=C_{\ref{st5}}(\phi,  \eta,  \kappa_0, \kappa_1, d )>0$ such that, for any $r \in (0,1)$, $Q\in\partial D$, and any open sets $U$ with $D_Q(r, r)\subset U  \subset  D$, 
	\begin{equation*}
		\mathbb{P}_x\left(X_{\tau_U} \in D\right) \,\le\, C_{\ref{st5}} \,\frac{ \E_x[\tau_U]}{\phi(r)}
		\qquad \text{ for } x \in D\cap B(Q, r/2{\sqrt{1+\Lambda^2}}).
	\end{equation*}
\end{lemma}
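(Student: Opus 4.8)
The plan is to prove \eqref{eq:st5-1} by a Dynkin-formula argument with a suitably scaled cutoff function, and then to deduce the ``in particular'' statement from it together with \eqref{comp:rho}.

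For \eqref{eq:st5-1}, fix $Q\in\partial D$ and $r\in(0,1)$; we may assume $\E_x[S]<\infty$, since otherwise there is nothing to prove. For each large $N$ I would choose $g=g_{Q,r,N}\in C^2_c(\Rd)$ with $0\le g\le 1$, $g\equiv 0$ on $B(Q,r/2)$, $g\equiv 1$ on $\overline{B(Q,N)}\setminus B(Q,r/2)$, $g\equiv 0$ outside $B(Q,2N)$, and satisfying the scale-invariant derivative bounds $\|\nabla g\|_\infty\le c/r$ and $\|\partial^2 g\|_\infty\le c/r^2$ with $c$ a universal constant; here the binding transition is the inner annulus of width $r/2$, while the outer transition annulus of width $N$ only produces derivatives of order $1/N$ and $1/N^2$. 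Inserting these bounds into \eqref{cLig} (taking the free parameter there to be our $r$) gives $\|\cL^{(i)}g\|_\infty\le c'/\phi(r)$ with $c'$ independent of $N$, hence $\|\cL g\|_\infty\le dc'/\phi(r)$. Applying \eqref{e:SnS} of \autoref{Dyk_F} to this $g$ and $S$, and using $g(x)=0$ for $x\in B(Q,r/2)$ together with $\1_{\{r\le|y-Q|\le N\}}(y)\le g(y)$, we obtain
\[
\mathbb{P}_x(r\le|X_S-Q|\le N)\le\E_x[g(X_S)]=\E_x\Big[\int_0^S\cL g(X_s)\,ds\Big]\le\|\cL g\|_\infty\,\E_x[S]\le\frac{dc'\,\E_x[S]}{\phi(r)};
\]
letting $N\to\infty$ and using monotone convergence yields \eqref{eq:st5-1}.

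For the ``in particular'' statement, on the event $\{X_{\tau_U}\in D\}$ the exit point lies in $D\setminus U\subseteq D\setminus D_Q(r,r)$. If $X_{\tau_U}\notin B(Q,R)$ then $|X_{\tau_U}-Q|\ge R>r$; otherwise $X_{\tau_U}\in B(Q,R)\cap D$, so $\rho_Q(X_{\tau_U})>0$, and failing to lie in $D_Q(r,r)$ forces either $|\widetilde{X_{\tau_U}}|\ge r$ (whence $|X_{\tau_U}-Q|\ge r$) or $\rho_Q(X_{\tau_U})\ge r$, in which latter case \eqref{comp:rho} together with $\delta_D(\,\cdot\,)\le|\,\cdot\,-Q|$ (valid since $Q\in\partial D$) gives $|X_{\tau_U}-Q|\ge\delta_D(X_{\tau_U})\ge\rho_Q(X_{\tau_U})/\sqrt{1+\Lambda^2}\ge r/\sqrt{1+\Lambda^2}$. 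Hence $\{X_{\tau_U}\in D\}\subseteq\{|X_{\tau_U}-Q|\ge r/\sqrt{1+\Lambda^2}\}$. Since the hypothesis on $x$ is precisely that $x$ lies in the ball of radius $\tfrac{1}{2}\,r/\sqrt{1+\Lambda^2}$ about $Q$, we may apply \eqref{eq:st5-1} with $S=\tau_U$ and with $r$ replaced by $r/\sqrt{1+\Lambda^2}$, and then invoke ({\bf WS}) to bound $\phi(r/\sqrt{1+\Lambda^2})^{-1}$ by a constant multiple of $\phi(r)^{-1}$:
\[
\mathbb{P}_x(X_{\tau_U}\in D)\le\mathbb{P}_x\big(|X_{\tau_U}-Q|\ge r/\sqrt{1+\Lambda^2}\,\big)\le\frac{c\,\E_x[\tau_U]}{\phi(r/\sqrt{1+\Lambda^2})}\le\frac{C_{\ref{st5}}\,\E_x[\tau_U]}{\phi(r)}.
\]

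The only real subtlety I anticipate is the non-compactness of the sublevel set $\{|y-Q|\ge r\}$: no single $C^2_c(\Rd)$ function can equal $1$ there, which is why the auxiliary outer cutoff at radius $N$ and the passage $N\to\infty$ (equivalently, Fatou's lemma along $g_N$) are needed, and one must check — which is immediate from \eqref{cLig} once one notes that the relevant norms of $g_N$ are $N$-independent — that $\|\cL g_N\|_\infty$ does not degenerate as $N\to\infty$. The construction of $g$ with the stated derivative bounds and the geometric comparison in the second part (where the constant $\sqrt{1+\Lambda^2}$ enters through \eqref{comp:rho}) are routine.
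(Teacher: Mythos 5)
Your argument is essentially the paper's: Dynkin's formula (\autoref{Dyk_F}) applied to a test function scaled at radius $r$, the generator bound \eqref{cLig} to get $\|\cL g\|_\infty\le c/\phi(r)$, and then the geometric reduction of the second assertion to \eqref{eq:st5-1} via $|z-Q|\ge \delta_D(z)\vee|\wt z-\wt Q|$, \eqref{comp:rho} and ({\bf WS}). The only structural difference is the choice of test function: the paper takes $g_r$ with values in $[-1,0]$, equal to $-1$ on $B(Q,r/2)$ and $0$ off $B(Q,r)$, and writes $\Pp_x(|X_S-Q|\ge r)\le \E_x[1+g_r(X_S)]=-g_r(x)+\E_x[g_r(X_S)]$; this keeps compact support and avoids your outer truncation at radius $N$ and the passage $N\to\infty$, though your version works too since, as you note, the bounds from \eqref{cLig} are $N$-independent. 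Two small inaccuracies to fix: (i) as written your $g$ cannot exist (it is required to be $0$ on $B(Q,r/2)$ and $1$ on $\overline{B(Q,N)}\setminus B(Q,r/2)$); you clearly intend $g\equiv1$ on $\overline{B(Q,N)}\setminus B(Q,r)$, which is what the inequality $\1_{\{r\le|y-Q|\le N\}}\le g$ uses; (ii) in the case $X_{\tau_U}\notin B(Q,R)$ your claim $R>r$ is unjustified, since $r\in(0,1)$ may exceed $R$; one only gets $|X_{\tau_U}-Q|\ge R$, and making this case rigorous requires an extra ($R$-dependent) comparison — a boundary case the paper's own proof also passes over silently, since it applies \eqref{comp:rho} to all of $D\setminus U$ without restricting to $B(Q,R)$. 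Neither point affects the substance of the argument.
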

\begin{proof}\hspace{-.05in}.
	For any $x \in B(Q, r/2)$, this lemma is clear when $\E_x[S]=\infty$ so we may assume that $\E_x[S]<\infty$.
	Let $g\in C^{\infty}_c(\bR^d)$ be a function $-1\le g\le 0$ satisfying 
	$$g(y)=\begin{cases}
	-1, & \text{if } |y|< 1/2\\
	0, & \text{if }  |y|\ge 1.
	\end{cases}$$
	Then 
	$$\sum^d_{i=1}\left\|\frac{\partial g}{\partial y_i}
	\right\|_\infty+ \sum^d_{i, j=1}\left\|\frac{\partial^2 g}{\partial y_i\partial y_j}
	\right\|_\infty =c_1<\infty .
	$$
	For any $r \in (0,1)$, define $g_{r}(y)=g(\frac{y-Q}{r})$. Then $-1\le g_{ r}\le 0$,
	\begin{equation}\label{e:n1}
		g_{r}(y)=
		\begin{cases}
			-1, & \text{if }\,|y-Q|< r/2\\
			0, & \text{if }\, |y-Q|\ge r, 
		\end{cases}
	\end{equation}
	and 
	\begin{align*}
		\sum^d_{i=1}\left\|\frac{\partial g_{r}}{\partial y_i} \right\|_\infty<\ c_1 r^{-1} \quad \text{and} \quad  \sum^d_{i, j=1}\left\|\frac{\partial^2g_{r}}{\partial y_i\partial y_j} \right\|_\infty < c_1 r^{-2}.
	\end{align*}
	According to \eqref{cLig}, there exists $c_2=c_2(\phi, \eta,\kappa_0,\kappa_1,d)>0$ such that for $0< r<1$,
	\begin{align}\label{e:genb}
		||\cL g_{r}||_\infty\le \sum_{i=1}^d	||\cL^{(i)} g_{r}||_\infty\le \frac{ c_2}{ \phi(r)}.
	\end{align}
	Therefore, by Lemma \ref{Dyk_F} with \eqref{e:n1} and \eqref{e:genb}, we obtain that for any $x\in B(Q, r/2)$ with $\E_x[S]<\infty$,
	\begin{align*}
		\Pp_x(|X_{S}-Q|\ge r)
		&=\E_x\left[1+g_{r} \left(X_{S}\right);|X_{S}-Q|\ge r\right]\\
		&\le \E_x\left[1+g_{ r} \left(X_{S}\right)\right]=-g_r(x)+\E_x\left[g_{r} \left(X_{S}\right)\right]\\
		&= \E_x\left[ \int_0^{S}   {\cL} g_{r}(X_t)dt \right] \le \|{\cL}g_{r}\|_\infty\,  \E_x[S]\le c_2\frac{\E_x[S]}{\phi(r)},
	\end{align*}
	which gives the proof of \eqref{eq:st5-1}. For the second assertion, for any
	$r\in (0, 1)$ and $Q=(\wt Q, Q^d)\in \partial D$, let $U$ be an open set satisfying that $D_Q(r, r)\subset U  \subset  D$.
	Since $\rho_Q(z)\ge r$ or $|\wt z-\wt Q|\ge r$ for $z=(\wt z, z^d)\in D\backslash U$, using \eqref{comp:rho}, we obtain that
	$$|z-Q|\ge \delta_D(z)\vee |\wt z-\wt Q|\ge \frac{r}{\sqrt{1+\Lambda^2}}.$$
	Therefore, \eqref{eq:st5-1} and ({\bf WS}) imply that for $x \in D\cap B(Q, r/2{\sqrt{1+\Lambda^2}})$,
	\begin{align*}
		\Pp_x\left(X_{\tau_U} \in D\right)\,\le\,    \Pp_x\left(|X_{\tau_U}-Q|\ge \frac{r}{\sqrt{1+\Lambda^2}}\right) \le c \frac{ \E_x[\tau_U]}{\phi(r)}.
	\end{align*}
\end{proof}

\begin{lemma}\label{g:st1-1}
	For any $R_0\le R/2$, 
	let  $r_0={R_0}/{4\Lambda}$.
	For any $Q\in \partial D$ and $k\in\bN$ satisfying that $2^{-k}<r_0/2$, 
	let $$ D_k:=\left\{y\in D\cap B(Q, R): |\wt y-\wt Q|\le r_0-\tfrac{1}{2^k},\ \tfrac{1}{2^k}<\rho_Q(y)<r_0-\tfrac{1}{2^k}\right\}.$$ 
	Then for every $u\in \Rd$ with $|u|<2^{-k}$, recall that the function $h_{R_0}=h_{Q,R_0}$ is defined in \autoref{g:st1}, then
	\begin{align}\label{eq:int1}
		\cL^{(i)}_u h_{Q,R_0}(w):= \lim_{\eps\downarrow 0} \, \int_{\{|t|>\eps\}} (h_{Q,R_0}(w-u+e_it)-h_{Q,R_0}(w-u))J(w, w+e_it)dt
	\end{align}
	is well defined in $D_k$
	and there exists $C_{\ref{g:st1-1}}=C_{\ref{g:st1-1}}(\phi, \kappa_0, \kappa_1, R, \Lambda, \eta, d)>0$ independent of $Q\in  \partial D$ and $k\in\bN$ such that
	\begin{align*}
		| \cL^{(i)}_u h_{Q,R_0}(w)|\le \frac{C_{\ref{g:st1-1}}}{V(R_0)}\qquad \mbox{for all } w\in D_k \text{ and } |u|<2^{-k}.
	\end{align*}
\end{lemma}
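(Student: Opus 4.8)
The plan is to follow the proof of \autoref{g:st1}, carrying along the translation by $u$ and the coefficient $\kappa$. Fix $w\in D_k$, $|u|<2^{-k}$ and $i\in\{1,\dots,d\}$, and put $v:=w-u$. Since $2^{-k}<r_0/2$, $w\in D_k\subset D_Q(r_0,r_0)$ and $r_0=R_0/4\Lambda$, a direct computation with $\|\nabla\varphi_Q\|_\infty\le\Lambda$ shows $|v-Q|<R_0$ and $|v+e_it-Q|<R_0$ for all $|t|<r_0$, so on that range the cut-off $\mathbf 1_{D\cap B(Q,R_0)}$ in $h_{Q,R_0}$ reduces to $\mathbf 1_{D}$, and in any case $0\le h_{Q,R_0}\le V(R_0)$. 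A first elementary bound, valid for $|t|<r_0$ and obtained from the sub-additivity \eqref{subad} of $V$, the $1$-Lipschitz property of $\delta_D$, and the fact that if exactly one of $v,v+e_it$ lies in $D$ then the segment joining them crosses $\partial D$, is $|h_{Q,R_0}(v+e_it)-h_{Q,R_0}(v)|\le V(|t|)$.

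Using \eqref{a:J} I would write $J(w,w+e_it)=\kappa(w,w)\nu^1(|t|)+\big(\kappa(w,w+e_it)-\kappa(w,w)\big)\nu^1(|t|)$ and split the integral \eqref{eq:int1} correspondingly. For the second piece, $(\mathbf K_\eta)$ gives $|\kappa(w,w+e_it)-\kappa(w,w)|\le\kappa_1|t|^\eta$; together with the bound just obtained, with $\nu^1(r)\asymp r^{-1}V(r)^{-2}$ from \eqref{e:com} and with \eqref{V:sc} this yields, since $\eta>\ua/2$,
\[
	\int_{\{0<|t|<r_0\}}V(|t|)|t|^\eta\nu^1(|t|)\,dt\;\le\;c\int_0^{r_0}\frac{s^{\eta-1}}{V(s)}\,ds\;\le\;\frac{c\,r_0^{\eta}}{V(r_0)}\;\le\;\frac{c'}{V(R_0)} ,
\]
while the part $|t|\ge r_0$ is bounded by $V(R_0)\int_{\{|t|\ge r_0\}}\nu^1(|t|)dt\le cV(R_0)V(r_0)^{-2}\le c'/V(R_0)$ by \eqref{eq:Vnu} and \eqref{V:sc}. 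This disposes of the $\kappa$-error term.

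It then remains to bound $\kappa(w,w)\lim_{\eps\downarrow0}\int_{\{|t|>\eps\}}\big(h_{Q,R_0}(v+e_it)-h_{Q,R_0}(v)\big)\nu^1(|t|)\,dt$; since $\kappa(w,w)\le\kappa_0$ by \eqref{a:kappa}, it is enough to estimate the integral, which is of the type handled in \autoref{g:st1}, but based at $v$. Its $|t|\ge r_0$ part is again $\le 2V(R_0)\int_{\{|t|\ge r_0\}}\nu^1(|t|)dt\le c/V(R_0)$. For $|t|<r_0$ I would rerun the argument of \autoref{g:st1} with $x$ replaced by $v$: take $v_0\in\partial D$ with $\delta_D(v)=|v-v_0|$, pass to the coordinate system $CS_{v_0}$, compare $h_{Q,R_0}$ near $v$ with the half-space function $y\mapsto V(y^d\vee0)$, whose $\cL_Z^{(i)}$ vanishes at points with positive $d$-coordinate by \autoref{std} (and is an explicitly bounded convergent integral when $v\notin D$, where $h_{Q,R_0}(v)=0$ and $h_{Q,R_0}(v+e_it)=0$ for $|t|<\delta_D(v)$ because $B(v,\delta_D(v))\subset D^c$), and control the remainder through the interior/exterior ball conditions, \eqref{comp:rho}, \autoref{VbisEst} and the scalings \eqref{e:com}, \eqref{V:sc}, exactly as in the bounds for $\mathrm I,\mathrm{II},\mathrm{III}$ there. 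All constants so produced depend only on $\phi,\kappa_0,\kappa_1,R,\Lambda,\eta,d$, hence are independent of $Q$, $k$ and $u$.

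I expect this last step to be the main obstacle. Since $|u|<2^{-k}$ while $w\in D_k$ only gives $\delta_D(w)\ge\rho_Q(w)/\sqrt{1+\Lambda^2}>2^{-k}/\sqrt{1+\Lambda^2}$, the base point $v=w-u$ can sit within distance $\lesssim 2^{-k}$ of $\partial D$, or even outside $D$, so \autoref{g:st1} cannot be invoked directly; one has to re-examine its region decomposition (the flat region near the boundary, the region above the boundary graph, and the far region) with $v$ as base point and verify that each of the three integral estimates survives with constants uniform in $Q,k,u$, in particular that the half-space comparison and the application of \autoref{VbisEst} remain valid when $\delta_D(v)$ is only of order $2^{-k}$ or when $v\notin D$. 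The translation by $u$ and the factor $\kappa$ intervene only in the elementary ways described above.
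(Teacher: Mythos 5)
Your treatment of the two $\kappa$-error terms is essentially the paper's own (its terms $G_\eps$ and $H$): split $J(w,w+e_it)$ into $\kappa(w,w)\nu^1$ plus the $(\mathbf K_\eta)$-controlled difference, and use $|h_{Q,R_0}(v+e_it)-h_{Q,R_0}(v)|\le c\,V(|t|)$ together with \eqref{e:com}, \eqref{V:sc}, \eqref{eq:Vnu}; that part is fine. The gap is precisely the step you flag as ``the main obstacle'' and leave unverified. The $2^{-k}$-margins in the definition of $D_k$ exist exactly so that the shifted base point $v=w-u$ stays inside $D_Q(r_0,r_0)$: from $|\wt w-\wt Q|\le r_0-2^{-k}$, $2^{-k}<\rho_Q(w)<r_0-2^{-k}$, $|u|<2^{-k}$ and $\|\nabla\varphi_Q\|_\infty\le\Lambda$ one gets $|\wt v-\wt Q|<r_0$ and $|\rho_Q(v)-\rho_Q(w)|\le |u^d|+\Lambda|\wt u|$, so $v$ remains in $D_Q(r_0,r_0)$ (up to a harmless $\Lambda$-dependent constant in the margin, which is how the lemma is meant to be read and used; in the application one may shrink the mollifier radius accordingly). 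In particular $v\in D$, so the scenario $v\notin D$ that drives your plan does not occur. Your worry comes from using only the crude inequality $\delta_D(w)\ge \rho_Q(w)/\sqrt{1+\Lambda^2}$ rather than the defining conditions of $D_k$ themselves.

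Once $v\in D_Q(r_0,r_0)$ is noted, the main term needs no new argument: writing $\kappa_u(x,y):=\kappa(u+x,u+y)$, the integrand in \eqref{eq:int1} is $(h_{Q,R_0}(v+e_it)-h_{Q,R_0}(v))\kappa_u(v,v+e_it)\nu^1(|t|)$, the leading piece is $\kappa_u(v,v)\,\cL^{(i),\eps}_Z h_{Q,R_0}(v)$, and \autoref{g:st1} applies verbatim at $v$; its bound $c/V(R_0)$ is uniform over all of $D_Q(r_0,r_0)$ with no lower bound on $\delta_D(v)$ required, so there is nothing to re-examine about the half-space comparison or \autoref{VbisEst} when $\delta_D(v)$ is of order $2^{-k}$. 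This is exactly the paper's proof. As written, your proposal replaces this one-line reduction by an unexecuted plan to redo the whole proof of \autoref{g:st1} at a possibly exterior base point, so the central estimate of the lemma is not actually established in your argument.
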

\begin{proof}\hspace{-.05in}.
	For any $Q \in \partial D$ and $R_0<R/2$, let $h(y):=h_{Q,R_0}(y)$ and $r_0={R_0}/{4\Lambda}$.  
	For any $w\in D_k$ and  $|u|<2^{-k}$,
	let $x:=w-u$ and $\kappa_u(x, y):=\kappa(u+x, u+y)$.
	Then $x\in D_Q(r_0, r_0)$, 
	and the integral of the right hand side of \eqref{eq:int1} is decomposed by
	\begin{align*}
		G_\eps(x)+\kappa_u(x, x)\cdot \cL^{(i), \eps}_{Z} h(x) +H(x)
	\end{align*}
	where 
	\begin{align*}
		G_\eps(x)&:=\int_{\{\eps<|t|\le 1\}} \big(h(x+e_it)-h(x)\big)\big(\kappa_u(x, x+e_it)-\kappa_u(x, x)\big)\nu^1(|t|)dt,\\
		H(x)&:=\int_{\{1<|t|\}} \big(h(x+e_it)-h(x)\big)\big(\kappa_u(x, x+e_it)-\kappa_u(x, x)\big)\nu^1(|t|)dt,
	\end{align*}
	and $\cL^{(i), \eps}_{Z}$ is defined in \eqref{e:gzi}.
	
	Since $\delta_D(y)\le |y-Q|\le R_0$  for any $y\in B(Q, R_0)$, 
	$\left|h(x+e_it)-h(x)\right|\le c_1$  by the definition of $h$.
	Therefore by \eqref{a:kappa} and  \eqref{e:large} with the fact that $R_0<1$,
	we first obtain 
	\begin{align}\label{e:H}
		|H(x)|\le c_1\int_{\{1<|t|\}}\nu^1(|t|)dt
		\le c_2 V(R_0)^{-1}.
	\end{align}
	
	For any $x, y\in D$, let $z_x,z_y\in\partial D$ be the points satisfying that $|z_x-x|=\delta_D(x)$ and $|z_y-y|=\delta_D(y)$.
	Let $x_0\in D$ be a point constructing a parallelogram with $z_x, z_y, x$ so that $|x-x_0|=|z_x-z_y|$. 
	Since 
	$$|z_x-z_y|^2=|\wt x-\wt y|^2+|\varphi_Q(\wt x)-\varphi_Q(\wt y)|^2\le(1+\Lambda^2) |x-y|^2,$$
	we have that
	$$\left|\delta_D(y)-\delta_D(x)\right|\le
	\left|x_0-y\right|\le |x-y|+|x-x_0|\le 3\Lambda |x-y|.$$
	Therefore, by \eqref{subad} 
	$$\left|h(x+e_it)-h(x)\right|\le c_3 V(|t|).$$
	Also $|\kappa_u(x, x+e_it)-\kappa_u(x, x)|\le \left(\kappa_1 |t|^\eta\wedge 2\kappa_0\right)$ by \eqref{a:kappa} and $\bf(K_\eta)$. Therefore, 
	$$|G_{\eps}(x)|
	\le  c_3\left(\kappa_1 \int_{\{|t|\le R_0\}}V(|t|)|t|^\eta \nu^1(|t|) dt+2\kappa_0\int_{\{|t|> R_0\}}V(|t|)\nu^1(|t|)dt\right)=: c_4\left( \mbox{I}+  \mbox{II}\right).$$
	By \eqref{e:com} and \eqref{V:sc} with $|t|\le R_0$, 
	for $\eta> \ua/2$, 
	$$|\mbox{I}| \le c_5 \int_{\{|t|\le R_0\}}V(|t|)^{-1}|t|^{\eta-1}dt
	\le \frac{c_6 R_0^{\ua/2}}{V(R_0)} \int_{\{|t| \le R_0\}}|t|^{-\ua/2+\eta-1}dt\le\frac{c_7}{V(R_0)}$$
	and by \eqref{eq:Vnu},  
	$|\mbox{II}|\le\frac{c_8}{V(R_0)}$.
	Therefore
	\begin{align}\label{eq:III}
		\lim_{\eps\downarrow 0} G_\eps(x) \text{ exists } \quad \text{ and } \quad  |\lim_{\eps\downarrow 0} G_\eps(x)| \le \frac{c_7+c_8}{V(R_0)}.
	\end{align}
	Finally  from \autoref{g:st1}, 
	$\lim_{\eps\downarrow 0}\cL^{(i), \eps}_{Z} h(x) $ exists and 
	\begin{align}\label{eq:A}
		\left|\lim_{\eps\downarrow 0}\cL^{(i), \eps}_{Z} h(x)  \right|\le \frac{C_{\ref{g:st1}}}{V(R_0)}.
	\end{align}
	Hence combining \eqref{e:H}, \eqref{eq:III} and \eqref{eq:A}, we have the conclusion.
\end{proof}

\medskip
We note the following L\'evy system for $X$: 
for any $x\in \bR^d$, stopping time $S$ (with respect to the filtration of $X$), and non-negative measurable function $f$ on $\bR_+ \times \bR^d\times \bR^d$ with $f(s, y, y)=0$ for all $y\in\bR^d$ and $s\ge 0$, we have that
\begin{equation}\label{eq:LSd}
	\E_x\Big[\sum_{s\le S} f(s,X_{s-}, X_s) \Big]=\E_x \Big[ \int_0^S \Big(\sum_{i=1}^{d}\int_\bR 
	f(s,X_s, X_s+e_ih) 
	J(X_s,X_s+e_ih) dh \Big) ds\Big],
\end{equation}
(see, e.g., \cite[Lemma 4.7]{ChKu03} and \cite[Appendix A]{ChKu08} for the proof). We now arrive at the estimates on exit distributions for $X$.

\begin{theorem}\label{L:exit}
	{\rm(1)}	There exist constants $\lambda_0=\lambda_0(\phi, \kappa_0, \kappa_1, R, \Lambda,  \eta, d )\ge 1$ and $C_{\ref{L:exit}.1}=C_{\ref{L:exit}.1}(\phi, \kappa_0, \Lambda, d )>0$ such that for any $Q\in \partial D$, $s\le R/(2\lambda_0 )$
	and $x\in D_Q(s, s)$,
	\begin{align*}
		\E_x[\tau_{D_Q(s, s)}]&\le C_{\ref{L:exit}.1}V(s)V(\delta_D(x)).
	\end{align*}
	{\rm(2) }
	There exists a constant
	$C_{\ref{L:exit}.2}=C_{\ref{L:exit}.2}(\phi, \kappa_0,\Lambda, d )>0$ such that for any $Q\in \partial D$, $s\le R/2$, $\lambda\ge 4\Lambda$ and $x\in D_Q(\lambda^{-1}s, \lambda^{-1}s)$,
	\begin{align*}
		\Pp_x\Big(X_{\tau_{D_Q(\lambda^{-1}s, \lambda^{-1}s)}}\in  D_Q(s, s)\backslash D_Q(s, \lambda^{-1}s)\Big)&\ge C_{\ref{L:exit}.2} V(\delta_D(x))/V(s).
	\end{align*}
\end{theorem}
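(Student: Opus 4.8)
The plan is to use the test function $h_{R_0}(y) = V(\delta_D(y))\1_{D\cap B(Q,R_0)}(y)$ from \autoref{g:st1} together with the Dynkin-type formula \eqref{e:SnS} (after a localization/approximation to get around the fact that $h_{R_0}\notin C_c^2$, which is exactly what \autoref{g:st1-1} is set up to handle). For part (1), fix $Q\in\partial D$, $s\le R/(2\lambda_0)$ and $x\in D_Q(s,s)$; I would take $R_0$ comparable to $\lambda_0 s$ so that $s = R_0/(4\Lambda)=r_0$ in the notation of \autoref{g:st1}, and apply \eqref{e:SnS} to $h_{R_0}$ with the stopping time $S=\tau_{D_Q(s,s)}$. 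Since $D_Q(s,s)\subset D\cap B(Q,R_0)$ when $\lambda_0$ is large enough, on the set $D_Q(s,s)$ the function agrees with $V(\delta_D(\cdot))$, so $\mathbb E_x[h_{R_0}(X_{\tau})]\ge 0$ trivially and $h_{R_0}(x)=V(\delta_D(x))$. The content is the generator bound: by \autoref{g:st1-1} (with $u=0$, or the direct bound of \autoref{g:st1}) we have $|\cL h_{R_0}(w)|\le c/V(R_0)\le c'/V(s)$ on $D_Q(r_0,r_0)$, hence from \eqref{e:SnS},
\begin{align*}
  -V(\delta_D(x)) \le \mathbb E_x[h_{R_0}(X_\tau)] - h_{R_0}(x) = \mathbb E_x\Big[\int_0^\tau \cL h_{R_0}(X_s)\,ds\Big] \le \frac{c'}{V(s)}\,\mathbb E_x[\tau].
\end{align*}
That gives a lower bound on $\mathbb E_x[\tau]$, which is the wrong direction; the right move is instead to apply \eqref{e:SnS} to $-h_{R_0}$ (or equivalently to note $h_{R_0}\ge 0$ and, more carefully, use a superharmonic-type comparison): one wants $\mathbb E_x[\tau]\le V(s)\cdot(\,\mathbb E_x[h_{R_0}(X_\tau)] - h_{R_0}(x) + \text{error})/c''$. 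The clean way is to build an auxiliary function $g$ with $\cL g \le -1/(c V(s)^2)$ on $D_Q(s,s)$ and $0\le g\le c V(s) V(\delta_D(x))$-type bounds; a standard choice is $g = a\, h_{R_0} - b\, h_{R_0}^{\,2}/V(R_0)$ or a combination of $h_{R_0}$ with a function that is genuinely subharmonic, chosen so the negative part of $\cL g$ dominates. Then \eqref{e:SnS} yields $\mathbb E_x[\tau]/(cV(s)^2)\le g(x)\le c V(s)V(\delta_D(x))/V(s)^2 \cdot V(s)$, i.e. $\mathbb E_x[\tau]\le C V(s)V(\delta_D(x))$.

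For part (2), the lower bound on the exit probability, I would again use \eqref{e:SnS} applied to $h_{R_0}$ with $R_0$ comparable to $s$ and the stopping time $\tau_{D_Q(\lambda^{-1}s,\lambda^{-1}s)}$. Write $\tau=\tau_{D_Q(\lambda^{-1}s,\lambda^{-1}s)}$. Decompose $\mathbb E_x[h_{R_0}(X_\tau)]$ according to whether $X_\tau$ lands in $D_Q(s,s)\setminus D_Q(s,\lambda^{-1}s)$, in $D_Q(s,\lambda^{-1}s)$ (the "side" exit, where $|\widetilde{X_\tau}-\widetilde Q|\ge \lambda^{-1}s$ but $\delta_D$ is still small, so $h_{R_0}$ is of order $V(\lambda^{-1}s)$ at most there — actually one needs to be careful: on $D_Q(s,\lambda^{-1}s)$ the value $\delta_D(X_\tau)$ could be up to $\sim\lambda^{-1}s$), or outside $D\cap B(Q,R_0)$ (where $h_{R_0}=0$), or outside $D$ entirely. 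Using $\mathbb E_x[h_{R_0}(X_\tau)] = h_{R_0}(x) + \mathbb E_x[\int_0^\tau \cL h_{R_0}\,ds]$, the error term is bounded by $(c/V(R_0))\,\mathbb E_x[\tau]$, and by part (1) applied at scale $\lambda^{-1}s$ this is at most $c\, V(\lambda^{-1}s)V(\delta_D(x))/V(R_0) \le c\, C_V^{?}\lambda^{-\underline\alpha/2} V(\delta_D(x))$, which can be made $\le \tfrac12 V(\delta_D(x))$ by taking $\lambda$ large (this is where the constant $\lambda_0$ and the weak-scaling \eqref{V:sc} enter). Since $h_{R_0}(x)=V(\delta_D(x))$ and $h_{R_0}\le c V(s)$ everywhere on its support while $h_{R_0}\le c V(\lambda^{-1}s)$ on the part of $D\cap B(Q,R_0)$ with $\delta_D \le \lambda^{-1}s$, one gets
\begin{align*}
  \tfrac12 V(\delta_D(x)) \le \mathbb E_x[h_{R_0}(X_\tau)] \le c\,V(s)\,\Pp_x\big(X_\tau\in D_Q(s,s)\setminus D_Q(s,\lambda^{-1}s)\big) + c\,V(\lambda^{-1}s),
\end{align*}
and after absorbing the $V(\lambda^{-1}s)$ term (again by choosing $\lambda$ large relative to $\delta_D(x)$ — or more honestly, by rescaling $s$ so that $\delta_D(x)$ is comparable to $\lambda^{-1}s$, which is the regime the theorem is really used in) one solves for the probability and obtains the claimed bound $\ge C\, V(\delta_D(x))/V(s)$.

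The main obstacle I anticipate is \textbf{part (1)} — specifically constructing the correct auxiliary (super)barrier whose generator is bounded \emph{above} by a strictly negative multiple of $V(s)^{-2}$ on $D_Q(s,s)$, rather than merely having $|\cL h_{R_0}|$ bounded. The bound from \autoref{g:st1} controls the size of $\cL h_{R_0}$ but not its sign, and $h_{R_0}$ alone is not superharmonic on $D_Q(s,s)$. The resolution is to add a genuinely strictly-subharmonic correction — e.g. compare with the known exit-time estimate for the ball $B(z,c\,\delta_D(x))\subset D$ via the interior estimate $\mathbb E_x[\tau_{B(z,r)}]\asymp V(r)^2$ for $Z$ (available from \eqref{e:com} and the Pruitt-function comparison), and patch it together with the boundary behavior of $h_{R_0}$ — so that the combined function has $\cL(\cdot)\le -c\,V(s)^{-2}$ throughout $D_Q(s,s)$; \eqref{e:SnS} then closes the estimate. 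A secondary technical point is justifying \eqref{e:SnS} for these functions, which are only $C^{1,1}$-type and compactly supported but not $C^2_c$ — this is precisely why \autoref{g:st1-1} was proved with the shifted operator $\cL^{(i)}_u$, and one approximates $h_{R_0}$ by $C^2_c$ functions, applies \autoref{Dyk_F}, and passes to the limit using the uniform generator bounds.
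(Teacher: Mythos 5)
Your setup (mollify $h_{R_0}=V(\delta_D(\cdot))\1_{D\cap B(Q,R_0)}$, apply \autoref{Dyk_F}, use \autoref{g:st1-1} to pass to the limit and get $|\cL h_{R_0}|\le c/V(R_0)$ on the thin box) coincides with the paper's, and you correctly see that this alone gives the wrong-direction inequality for part (1). The genuine gap is in how you propose to close it. The paper does \emph{not} build a superharmonic barrier with $\cL g\le -c\,V(s)^{-2}$; instead it keeps $h_{R_0}$ and lower-bounds the boundary term: by the L\'evy system \eqref{eq:LSd}, starting from $D_0=D_Q(\lambda^{-1}R_0,\lambda^{-1}R_0)$ the process jumps before $\tau_{D_0}$ into an interior box $D_1\subset D\setminus D_0$ placed at distance $\asymp \lambda^{-1}R_0$ from $Q$ along a coordinate direction $e_{i_0}$ chosen to make angle less than $\pi/4$ with the inward normal (this choice is exactly where the anisotropy --- jumps only along coordinate axes --- is handled, and it is absent from your sketch). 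Since $h_{R_0}\gtrsim V(\lambda^{-1}R_0)$ on $D_1$, this yields $\E_x[h_{R_0}(X_{\tau_{D_0}})]\ge c\,\E_x[\tau_{D_0}]/V(\lambda^{-1}R_0)$, which dominates the error $c\,\E_x[\tau_{D_0}]/V(R_0)$ once $\lambda\ge\lambda_0$ by \eqref{V:sc}, and \eqref{e:ss} then gives $\E_x[\tau_{D_0}]\le cV(\lambda^{-1}R_0)V(\delta_D(x))$. Your alternative ($g=a h_{R_0}-b h_{R_0}^2/V(R_0)$, or patching interior ball exit-time functions) is left entirely unconstructed: nothing in the paper controls $\cL(h_{R_0}^2)$, and no argument is given that any such patched function has a strictly negative generator of size $V(s)^{-2}$ on all of $D_Q(s,s)$; as written this is a missing proof, not a proof.

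Part (2) has a second genuine gap. In your decomposition of $\E_x[h_{R_0}(X_{\tau})]$ the side-exit contribution produces an additive term of order $V(\lambda^{-1}s)$, and you propose to absorb it by ``choosing $\lambda$ large relative to $\delta_D(x)$'' or assuming $\delta_D(x)\asymp\lambda^{-1}s$; but the theorem must hold uniformly for all $x\in D_Q(\lambda^{-1}s,\lambda^{-1}s)$, including $\delta_D(x)\ll\lambda^{-1}s$, where $V(\lambda^{-1}s)$ is not dominated by $V(\delta_D(x))$ (controlling $\Pp_x(\text{side exit})$ by $V(\delta_D(x))/V(\lambda^{-1}s)$ would be a Carleson-type bound not yet available, and proving it this way would be circular). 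The paper avoids this entirely: it bounds $\E_x[h_{R_0}(X_{\tau_{D_0}})]\le cV(R_0)\Pp_x(X_{\tau_{D_0}}\in D)\le cV(R_0)\E_x[\tau_{D_0}]/\phi(\lambda^{-1}R_0)$ using \autoref{st5}, so that via \eqref{e:ss1} \emph{every} term is proportional to $\E_x[\tau_{D_0}]$, and then converts $\E_x[\tau_{D_0}]$ into the target probability through a second L\'evy-system estimate with a box $D_2\subset D_Q(R_0,R_0)\setminus D_Q(R_0,\lambda^{-1}R_0)$, again reached by a single $e_{i_0}$-direction jump. This also removes the need to take $\lambda$ large in part (2), which matters because the statement is claimed for every $\lambda\ge 4\Lambda$, whereas your argument would only deliver it for $\lambda$ beyond some large threshold.
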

\begin{proof}\hspace{-.05in}.
	For any $Q\in \partial D$ and $R_0\le R/2$, define 
	$h_{R_0}(y):=h_{Q, R_0}(y)=V(\delta_D(y))\1_{D\cap B(Q, R_0)} (y)$. 
	Consider a non-negative function $f\in C^\infty(\Rd)$ such that 
	$f(y)=0 \text { for } |y|>1 \text{ and }\int_{\Rd} f(y)dy =1$.
	For any $k\ge 1$, define $f_k(y):=2^{kd} f(2^k y)$ and $h^{(k)}:=f_k*h_{R_0}\in C_c^2(\Rd)$.
	Let 
	$$ D_k^\lambda:=\left\{y\in D\cap B(Q, R):|\wt y-\wt Q|\le \tfrac{R_0}{\lambda}-\tfrac{1}{2^k}, \ \tfrac{1}{2^k}<
	\rho_Q(y)<\tfrac{R_0}{\lambda}-\tfrac{1}{2^k}\right\} \ \text{ for } \lambda\ge 4\Lambda.$$
	Since  $h^{(k)}\in C^2_c(\Rd)$, $\cL^{(i)} h^{(k)}$ is well defined (see, the discussion before \eqref{eq:cLi}) and
	for any $w\in D_k^\lambda$,
	\begin{align}\label{eq:Lik}
		&\int_{\{|t|>\eps\}} ( h^{(k)}(w+e_it)- h^{(k)}(w))J(w, w+e_it)dt\\
		=&\int_{\{|t|>\eps\}}\int_{\bR^d}f_k(u)\left(h_{R_0}(w+e_it-u)-h_{R_0}(w-u)\right)duJ(w, w+e_it)dt\nn\\
		=&\int_{\{|u|<2^{-k}\}}f_k(u)\left(\int_{\{|t|>\eps\}}\left(h_{R_0}(w+e_it-u)-h_{R_0}(w-u)\right)J(w, w+e_it)dt\right)du.\nn
	\end{align}
	\autoref{g:st1-1} implies that for $w\in D_k^\lambda$ and $u\in B(0, 2^{-k})$, 
	\begin{align*}
		&\lim_{\eps\downarrow 0}\int_{\{|t|>\eps\}} (h_{R_0}(w+e_it-u)-h_{R_0}(w-u))J(w, w+e_it)dt= \cL^{(i)}_u h_{R_0}(w),
	\end{align*}
	and $ -C_{\ref{g:st1-1}}V(R_0)^{-1} \le\cL^{(i)}_u h_{R_0}(w) \le C_{\ref{g:st1-1}}V(R_0)^{-1}$.
	So by letting $\varepsilon \downarrow 0$ in \eqref{eq:Lik} with the dominated convergence theorem, it follows that for $w \in D_k^\lambda$,
	\begin{align*}
		|\cL^{(i)} h^{(k)}(w)|=  \left|\int_{\{|u|<2^{-k}\}} f_k(u) \cL^{(i)}_u h_{R_0}(w)\, du\right|\le \frac{C_{\ref{g:st1-1}}}{V(R_0)} \int_{\{|u|<2^{-k}\}} f_k(u) \, du =  \frac{C_{\ref{g:st1-1}}}{V(R_0)}.
	\end{align*}
	Therefore,
	\begin{align}\label{e:generhk}
		|\cL h^{(k)}(w)|\le \sum_{i=1}^d|\cL^{(i)} h^{(k)}(w)|\le d \frac{C_{\ref{g:st1-1}}}{V(R_0)}.
	\end{align}
	Applying \autoref{Dyk_F} to $\tau_{D_k^\lambda}$ and $h^{(k)}$ with \eqref{e:generhk}, for any $x \in D_k^\lambda$,   we have 
	\begin{equation*}
		\E_{x}\left[h^{(k)}\big(X_{\tau_{D_k^\lambda}}\big)\right]-\frac{d C_{\ref{g:st1-1}}}{V(R_0)}  \E_x\big[\tau_{D_k^\lambda}\big] \le h^{(k)}(x) 
		\le\E_{x}\left[h^{(k)}\big(X_{\tau_{D_k^\lambda}}\big)\right]+\frac{d C_{\ref{g:st1-1}}}{V(R_0)}  \E_x\big[\tau_{D_k^\lambda}\big].
	\end{equation*}
	Let $D_0:= D_Q(\lambda^{-1}R_0, \lambda^{-1}R_0)$.
	For any $x \in D_0\subset B(Q, R_0)$, 
	letting $k\to \infty$ we have that
	\begin{align}
		&V(\delta_D(x))\ge \E_{x}\left[h_{R_0}\left(X_{ \tau_{D_0}} \right)  \right] - \frac{d C_{\ref{g:st1-1}}}{V(R_0)} \E_x\left[\tau_{D_0}\right]\,,\label{e:ss}\\ &V(\delta_{D}(x))\le   \E_{x}\left[h_{R_0}\left(X_{ \tau_{D_0}} \right)  \right]+\frac{d C_{\ref{g:st1-1}}}{V(R_0)}\E_x\left[\tau_{D_0}\right].\label{e:ss1}
	\end{align}
We may assume that $R<1/4$.
Let $r_1:=R_0/\lambda<1/(32\Lambda)$.
For $z\in D_0$, since 
\begin{align}\label{e:newD0}
|z-Q|^2\le {r_1^2+(\varphi_Q(r_1)+r_1)^2}\le {r_1^2+(\Lambda r_1^2+r_1)^2}<3r_1^2,\qquad
D_0\subset B(Q, \sqrt 3 r_1).	
\end{align}

In $CS_Q$, let $\Pi$ be the hyperplane tangent to $\partial D$ at $Q$. Let 
$\Gamma(\wt y):=\nabla \varphi_Q(\wt Q)\cdot \wt y$ be a function defined on $\bR^{d-1}$ which describes the hyperplane $\Pi$ and let $v:=v_Q:=\nabla \varphi(\wt x)/|\nabla \varphi(\wt x)|$ be a unit normal vector of the hyperplane $\Pi$. Then there exists an index $i_0\in \{1, \ldots, d\}$ such that the angle between $v$ and $e_{i_0}$, denoted by $\theta_1:=\text{arg}(v, e_{i_0})\in [0, \frac{\pi}{4})$, in other words,
$\left\langle v, e_{i_0}  \right\rangle=\cos \theta_1\in (\frac{1}{\sqrt{2}}, 1]$ and  $|\nabla \varphi_Q(\wt Q)|=|\tan\theta_1|< 1$.	
For any $y\in \bR^d$, we use the new notation $\hat y^i:=(y^1, \ldots, y^{i-1}, y^{i+1}, \ldots, y^d)\in \bR^{d-1}$.
Define two sets $D_1, D_2\subset D$ by
\begin{align*}
P_1:=Q+5r_1e_{i_0}\quad &\text{ and }\quad  D_1:=\{y\in \bR^d:|\hat y^{i_0}-\hat P_1^{i_0}|< \sqrt 3 r_1, \  |y^{i_0}-P_1^{i_0}|< r_1\}\,;\\
P_2:=Q+\big(\tfrac{5r_1}{ \cos\theta_1}\big)e_{i_0}\ &\text{ and }\quad   D_2:=\{y\in \bR^d:|\hat y^{i_0}-\hat P_2^{i_0}|< \sqrt 3 r_1, \  |y^{i_0}-P_2^{i_0}|< r_1\}. 
\end{align*}
Then 
$$D_1\subset D^c_0\cap D\qquad \text{ and } \qquad D_2\subset D_Q(R_0, R_0) \backslash\ D_Q(R_0,r_1).$$
More precisely, for $y\in D_1$, since $|y-P_1|\le 2r_1$, 
$|y-Q|\ge |Q-P_1|-|P_1-y|>5r_1-2r_1=3 r_1$ and \eqref{e:newD0} indicates that $D_1\subset D_0^c$.
Let $B(O, R)$ be the interior ball tangent to $Q$ with a center $O$ and radius $R$. 
We may assume that $\Lambda>2$ so that $r_1<R/16$.  Since $|O-Q|=R$ and $\cos \theta_1\in (\frac{1}{\sqrt{2}}, 1]$, 
$$|O-P_1|^2=(|O-Q|\sin \theta_1)^2+(|O-Q|\cos \theta_1-5r_1)^2=R^2-10r_1  R\cos \theta_1+25r_1^2<(R-2r_1)^2.$$
Therefore, for any $y\in D_1$, 
$$|O-y|\le |O-P_1|+|P_1-y|<(R-2r_1)+2r_1=R,\ \text{hence } D_1\subset B(O, R)\subset D.$$
Also for any $y\in D_2$ in $CS_Q$, note that 
$$|\wt P_2|= \sin\theta_1\cdot (\tfrac{5r_1}{ \cos\theta_1})=5r_1\tan\theta_1<5r_1\qquad\text{ and }\qquad |P_2^d|=\cos\theta_1\cdot (\tfrac{5r_1}{ \cos\theta_1})=5r_1.$$
Then $|\wt y|\le |\wt y-\wt P_2|+|\wt P_2|< 2r_1+5r_1={7r_1}<R_0$. Also since 
$3r_1<|P_2^d|-|y^d-P_2^d|\le |y^d|\le |y^d-P_2^d|+|P_2^d|\le 7r_1$ and the fact that $r_1<\frac{1}{32\Lambda}$,
$$ r_1< 3r_1-\Lambda(7 r_1)^2\le |\rho_Q(y)|=|y^d|-\varphi_Q(|\wt y|)<R_0$$ therefore,
$D_2\subset D_Q(R_0, R_0) \backslash\ D_Q(R_0,r_1)$.
By \eqref{e:ss} with $D_1$ and \eqref{e:ss1} with $D_2$, we obtain our conclusion (1) and (2), respectively.

Let $z  \in D_0$ and $y:=z+le_{i_0}  \in D_1$ for some $l\ge 0$.
Since $z\in B(Q, \sqrt 3 r_1)$ by \eqref{e:newD0}, 
\begin{align*}
2r_1& <|Q^{i_0}-P_1^{i_0}|-|y^{i_0}-P_1^{i_0}|-|z^{i_0}-Q^{i_0}|\le |y^{i_0}-z^{i_0}|=l	\\
&\le |Q^{i_0}-P_1^{i_0}|+|y^{i_0}-P_1^{i_0}|+|z^{i_0}-Q^{i_0}|<5r_1+r_1+\sqrt 3 r_1<8 r_1<\tfrac{1}{4\Lambda}.
\end{align*}
Note that $\rho_Q(y)\ge y^d-\Lambda|\wt y|^2$, and with $\theta_1$ such that $\sin\theta_1<\frac{1}{\sqrt{2}}< \cos\theta_1$,
$$y^d=z^d+ l\cos\theta_1\ge \tfrac{l}{\sqrt{2}}\,, \qquad 
\Lambda |\wt y|^2=\Lambda (|\wt z|+l\sin\theta_1)^2\le \Lambda(\tfrac{1}{2}l+\tfrac{1}{\sqrt{2}}l)^2<\tfrac{9}{16}l.$$ 
So by \eqref{comp:rho}
$$\delta_D(y)\sqrt{1+\Lambda^2}\ge \rho_Q(y)>l/16.$$
	Therefore,  using \eqref{a:kappa} and  \eqref{a:J}
with the above observations, \eqref{eq:LSd}
implies that 
\begin{align}\label{e:31} 
	\E_x\Big[h_{R_0}\big(X_{\tau_{D_0}}\big)\Big]
	&\ge  \kappa_0^{-1}\E_x \Bigg[\int_{D_1} \int_0^{\tau_{D_0}} h_{R_0}(y) \nu^1( |X_t^{i_0}-y^{i_0}|)\, dt  dy \Bigg]\\
	&\ge  \kappa_0^{-1} \,  \E_x\left[\tau_{D_0}\right] \int_{2r_1}^{8r_1} V(l/16\sqrt{(1+\Lambda^2)}) \nu^1(|l|)dl\nn \\
	&\ge c_1\,  \E_x\left[\tau_{D_0}\right] \int_{2r_1}^{8r_1} \frac{1}{V(l)l}dl\ge c_1\E_x\left[\tau_{D_0}\right] C_{\ref{VbisEst}}^{-1}\int_{2r_1}^{8r_1}\frac{V^{\prime}(l)}{V(l)^2}dl\nn\\
	&=  C_2\E_x\left[\tau_{D_0}\right] V(\lambda^{-1}R_0)^{-1}\nn 
\end{align}
for some $C_2:=C_2(\phi, \kappa_0, d)>0$.
Here we also used the fact that $V$ is non-decreasing, \eqref{e:com} and  \autoref{VbisEst}.
	Let $\lambda_0:=\Big(2C_VdC_{\ref{g:st1-1}}/C_2\Big)^{2/\la}\vee 4\Lambda\ge 1$.
Using \eqref{V:sc},  $V(\lambda^{-1}R_0)\le V(\lambda_0^{-1}R_0)\le C_V\lambda_0^{-\la/2}V(R_0)$ for any $\lambda\ge \lambda_0$.
Then combining \eqref{e:ss} and  \eqref{e:31},  we have that for $\lambda \ge \lambda_0$,  
\begin{align*}
	V(\delta_{D}(x))
	\ge & \left(C_2V( \lambda^{-1}R_0)^{-1} -dC_{\ref{g:st1-1}}V(R_0)^{-1} \right) \E_x[\tau_{D_0}]\\
	\ge & \, (C_2/2)\, V(\lambda^{-1}R_0)^{-1}\E_x[\tau_{D_0}].
\end{align*}
Thus, we have proved  (1) with $\lambda_0$ and $s=\lambda^{-1}R_0$ where $\lambda\ge \lambda_0$.

For the second assertion, according to the definition of $h_{R_0}$, 
we first note that $$\E_{x}\big[h_{R_0}\big(X_{\tau_{D_0}}\big)\big]\le c_3 V(R_0)\Pp_{x}\big(X_{ \tau_{D_0}}\in D\big).$$ 
Combining this fact and  \autoref{st5} with $D_0=D_Q(\lambda^{-1}R_0,\lambda^{-1}R_0)$,
\eqref{e:ss1} together with  \eqref{e:com} yields that for $x \in D_0$,
\begin{align}\label{e:39}
V(\delta_D(x))\le &\,\, c_4\left( V(R_0)\,\Pp_{x}\big(X_{\tau_{D_0}}\in D \big)+  V(R_0)^{-1}\E_x\left[\tau_{D_0}\right]\right)\nn\\
\le &\,\, c_4\left( C_{\ref{st5}}V(R_0)\,\phi(\lambda^{-1}R_0)^{-1}+  V(R_0)^{-1}\right)\E_x\left[\tau_{D_0}\right]\nn\\
\le&\ c_5 V(R_0)\left(V(\lambda^{-1}R_0)^{-2}+V(R_0)^{-2}\right)\E_x\left[\tau_{D_0}\right.
\end{align}
Recall that $$P_2=Q+\big(\tfrac{5r_1}{ \cos\theta_1}\big)e_{i_0}\ \text{ and }\  D_2=\{y\in \bR^d:| \hat y^{i_0}-\hat P_2^{i_0}|< \sqrt 3 r_1, \  |y^{i_0}-P_1^{i_0}|< r_1\}. $$	
For $z\in D_0$ and $y=z+le_{i_0}\in D_2$ with some $l>0$,
\begin{align*}
	r_1<&|P_2^{i_0}-Q^{i_0}|-|Q^{i_0}-z^{i_0}|-|y^{i_0}-P_2^{i_0}|\le	|y^{i_0}-z^{i_0}|=l\\
\le &|P_2^{i_0}-Q^{i_0}|+|Q^{i_0}-z^{i_0}|+|y^{i_0}-P_2^{i_0}|\le \tfrac{5r_1}{\cos\theta_1}+\sqrt 3 r_1+r_1\le 10r_1.
\end{align*}
Therefore, by \eqref{a:kappa}, \eqref{eq:LSd}, \eqref{e:com} and \eqref{V:sc}, we obtain that 
\begin{align}\label{e:40}
\Pp_{x}\Big(X_{\tau_{D_0}}&\in  \big(D_Q(R_0, R_0) \backslash\ D_Q(R_0, r_1)\big)\Big)\ge \Pp_{x}\Big(X_{\tau_{D_0}}\in D_2\Big)\nn\\
&\ge \,\kappa_0^{-1}\E_x\left[\int_{D_2}\int_0^{\tau_{D_0}}\nu^1( |X_t^{i_0}-y^{i_0}|)dt\, dy \right]
\ge \ c_6\ \E\left[\tau_{D_0}\right] \int_{r_1}^{2r_1}\nu^1(|l|)dl\nn\\
&\ge c_7\ \E\left[\tau_{D_0}\right] \int_{r_1}^{2r_1}\frac{1}{[V(l)]^2  l}\ dl
\ge  c_8\ V(r_1)^{-2}\E\left[\tau_{D_0}\right].
\end{align}

 Recall that $r_1=\lambda^{-1}R_0$,	combining \eqref{e:39}, \eqref{e:40}, and the fact that $V$ is non-decreasing, we conclude that for $\lambda\ge 4\Lambda$ and for $x \in B(Q, R_0/(2\lambda{\sqrt{1+\Lambda^2}}))$,
\begin{align*}
V(\delta_D(x))\le &\ c_5c_8	V(R_0)\left(V(\lambda^{-1}R_0)^{-2}+V(R_0)^{-2}\right) V(\lambda^{-1}R_0)^2\\
&\qquad\cdot \Pp_{x}\left(X_{ \tau_{D_0}}\in \big(D_Q(R_0, R_0) \backslash\ D_Q(R_0, \lambda^{-1}R_0)\big)\right)\nn\\
\le&\ c_9V(R_0)\Pp_{x}\left(X_{ \tau_{D_0}}\in \big(D_Q(R_0, R_0) \backslash\ D_Q(R_0, \lambda^{-1}R_0)\big)\right).
\end{align*}
Thus, we have proved (2) with $s=R_0$.
\end{proof}

\section{Regularity of Dirichlet heat kernel and upper bound estimate}\label{sec:regupper}
{Let $U\subset\bR^d$ be an open set.
	For any $t>0, x,y\in\bR^d$,  we consider the function
	$$p_U(t,x,y):=p(t,x,y)-\E_x[\, p(t-\tau_U,X_{\tau_U},y);\ \tau_U<t].$$
	It can be pointwisely defined and it is jointly measurable on $(0,\infty)\times(\bR^d\setminus \partial U)\times(\bR^d\setminus \partial U)$.
	
	\medskip
	In \autoref{subsec:regularity}, we obtain the regularity result of $p_U(t,x,y)$ in \autoref{thm:regularity}, which states that $p_U(t,x,y)$ admits a continuous refinement. In \autoref{subsec:upper}, we obtain the short time sharp upper bound for the Dirichlet heat kernel, which is the result in \autoref{t:nmain}.}

\subsection{Regularity of Dirichlet heat kernel}\label{subsec:regularity}
Using the same methods as in the proof of \cite[Theorem 2.4]{MR1329992}, one obtains the following fundamental properties for $p_U(t,x,y)$.
\begin{lemma}\label{CZlemma}
	For any $t>0, A\in\mathcal B(\bR^d)$ and $x\in \bR^d$,
	$$
	\Pp_x(t<\tau_U;X_t\in A)=\int_Ap_U(t,x,y)dy.
	$$
	$p_U(t,x,y)$ is almost surely symmetric on $\bR^d\times\bR^d$, that is, for any $t>0$,
	$$
	p_U(t,x,y)=p_U(t,y,x)\qquad \text{ for a.e. } x,y\in U\times U.
	$$
	Moreover, we have for any $s,t>0$ and $x\in U$,
	$$
	p_U(t+s,x,y)=\int_{\bR^d}p_U(t,x,z)p_U(s,z,y)dz\qquad\text{for a.e. }y\in U.
	$$
\end{lemma}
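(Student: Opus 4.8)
The plan is to prove the three assertions in turn, following the method of \cite[Theorem 2.4]{MR1329992}, using only the (strong) Markov property of $X$, the symmetry of the transition density $p(t,x,y)$, and Fubini's theorem; no further analytic input is needed. Write $X^U$ for the process $X$ killed upon exiting $U$. For the first identity, fix $t>0$, $x\in\bR^d$ and $A\in\mathcal B(\bR^d)$, and split $\Pp_x(X_t\in A)=\Pp_x(t<\tau_U;X_t\in A)+\Pp_x(\tau_U\le t;X_t\in A)$. Applying the strong Markov property at $\tau_U$ to the last term, and using that $X_{t-\tau_U}$ has density $p(t-\tau_U,X_{\tau_U},\cdot)$ under $\Pp_{X_{\tau_U}}$ together with $\Pp_x(\tau_U=t)=0$ (a standard fact, which also lets us replace $\le$ by $<$), we obtain $\Pp_x(\tau_U\le t;X_t\in A)=\E_x[\int_A p(t-\tau_U,X_{\tau_U},y)\,dy;\ \tau_U<t]$. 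The inner integral is bounded by $1$, so Tonelli's theorem moves it outside $\E_x$; subtracting from $\Pp_x(X_t\in A)=\int_A p(t,x,y)\,dy$ gives $\Pp_x(t<\tau_U;X_t\in A)=\int_A p_U(t,x,y)\,dy$. In particular, taking $A\subset\bR^d\setminus U$ shows $p_U(t,x,\cdot)=0$ a.e.\ off $U$, and the case of general $A$ shows $p_U(t,x,\cdot)\ge 0$ a.e.

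For the symmetry, note that since $p(t,\cdot,\cdot)$ is symmetric the semigroup $P_tf(x)=\E_x[f(X_t)]$ is self-adjoint on $L^2(\bR^d,dx)$; as $X^U$ is the part process of the symmetric process $X$, killing on exiting $U$ preserves symmetry, so the killed semigroup $P^U_tf(x):=\E_x[f(X_t);\,t<\tau_U]=\int_U p_U(t,x,y)f(y)\,dy$ (the last equality by the first identity) is self-adjoint on $L^2(U,dx)$. Applying the identity $\langle P^U_tf,g\rangle=\langle f,P^U_tg\rangle$ to bounded, compactly supported $f,g\ge 0$ and using Fubini yields $\iint_{U\times U}p_U(t,x,y)f(y)g(x)\,dx\,dy=\iint_{U\times U}p_U(t,y,x)f(y)g(x)\,dx\,dy$, whence $p_U(t,x,y)=p_U(t,y,x)$ for a.e.\ $(x,y)\in U\times U$.

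For the Chapman--Kolmogorov identity, fix $s,t>0$, $x\in U$ and $A\in\mathcal B(U)$. On $\{t<\tau_U\}$ one has $\tau_U=t+\tau_U\circ\theta_t$ and $X_{t+s}=X_s\circ\theta_t$, so the Markov property at the deterministic time $t$ gives, using the first identity for the inner probability, $\Pp_x(t+s<\tau_U;X_{t+s}\in A)=\E_x[\1_{\{t<\tau_U\}}\Pp_{X_t}(s<\tau_U;X_s\in A)]=\E_x[\1_{\{t<\tau_U\}}\int_A p_U(s,X_t,y)\,dy]$. Applying the first identity once more to the outer expectation, with the bounded measurable integrand $z\mapsto\int_A p_U(s,z,y)\,dy$, and then Fubini, we get $\int_A p_U(t+s,x,y)\,dy=\int_A\big(\int_U p_U(t,x,z)p_U(s,z,y)\,dz\big)\,dy$; since $A\in\mathcal B(U)$ is arbitrary, $p_U(t+s,x,y)=\int_{\bR^d}p_U(t,x,z)p_U(s,z,y)\,dz$ for a.e.\ $y\in U$ (the $z$-integral over $\bR^d\setminus U$ vanishes, since $p_U(t,x,\cdot)=0$ there). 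There is no genuine obstacle in any of this; the points needing attention are purely measure-theoretic --- verifying Fubini/Tonelli at each interchange (immediate, since every interchanged integral is bounded by a probability), justifying $\Pp_x(\tau_U=t)=0$ so the strong Markov split is clean, and, for the symmetry, correctly invoking that the part process of a symmetric Markov process is again symmetric with respect to the same reference measure; this last point is the only step that is not a pure sample-path manipulation, and is where I would take care to cite the relevant fact from the theory of symmetric Markov processes (equivalently, the symmetry of the part Dirichlet form $(\mathcal E,\mathcal F_U)$).
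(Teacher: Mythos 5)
Your proposal is correct, and for the first identity and the Chapman--Kolmogorov equation it is exactly the Chung--Zhao-type argument that the paper invokes by citation (strong Markov property at $\tau_U$, then the Markov property at the deterministic time $t$ together with the first identity and Fubini). Two remarks. First, your symmetry step takes a slightly different route from the cited method: Chung--Zhao obtain symmetry from the discrete-skeleton approximation $\int_U\cdots\int_U p(t/n,x,z_1)\cdots p(t/n,z_{n-1},y)\,dz_1\cdots dz_{n-1}$, which is symmetric by inspection and decreases to $p_U$, whereas you quote the self-adjointness of the killed (part) semigroup from the theory of symmetric Markov processes; since $X$ is constructed here as the Hunt process of a regular symmetric Dirichlet form, that citation is legitimate and arguably cleaner, and it only costs you the a.e.\ statement, which is all the lemma claims. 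Second, the hinge you correctly identified, $\Pp_x(\tau_U=t)=0$ for every fixed $t>0$, is genuinely needed for the first identity to hold for all Borel $A$ (otherwise one picks up the extra term $\Pp_x(\tau_U=t,\,X_t\in A)$), and it does hold in this setting: for each $z$ the law of $\tau_U$ under $\Pp_z$ has at most countably many atoms, so by Fubini $\Pp_z(\tau_U=r)=0$ for a.e.\ $(r,z)$, and then the Markov property at a suitable intermediate time $s<t$ together with the absolute continuity of the law of $X_s$ gives $\Pp_x(\tau_U=t)\le\int p(s,x,z)\Pp_z(\tau_U=t-s)\,dz=0$. With these two standard inputs made explicit, your argument is complete.
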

The above properties determine a semigroup, that is, for any non-negative measurable function $f$ on $U$ or $f\in L^2(U)\cap L^\infty(U)$, 
$$P_t^Uf(x):=\int_Up_U(t,x,z)f(z)dz\qquad \text{for }t>0,\ x\in\bR^d\setminus\partial U,$$
and the heat semigroup is pointwisely defined on $(0,\infty)\times(\bR^d\setminus \partial U)$. Moreover,  for any $\lambda>0$, we define the $\lambda$-potential as follows:
$$U^\lambda f(x)=\E_x\left[\int_0^\infty e^{-\lambda t}f(X_t^U)dt\right]=\int_0^\infty e^{-\lambda t}P_t^Uf(x)dt\qquad\text{for } x\in\bR^d\setminus\partial U,$$
and it is also defined pointwisely on $\bR^d\setminus\partial U$. The following results are established in \cite[Proposition 4.2]{KW22}.
\begin{proposition}\label{prop:Exptau}
	For any $r>0$, there exist $a_i:=a_i(\phi, d)>0, i=1,2$ such that
	$$a_1\phi(r)\leq \E_x[\tau_{B(x,r)}]\leq a_2\phi(r).$$
\end{proposition}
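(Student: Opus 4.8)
The plan is to establish the two-sided bound $a_1\phi(r)\le \E_x[\tau_{B(x,r)}]\le a_2\phi(r)$ by combining the heat kernel estimates \eqref{eq:hke} with the L\'evy system formula \eqref{eq:LSd} for the exit estimate, together with a standard test-function comparison for the lower bound. First I would reduce by translation to $x=0$, so we estimate $\E_0[\tau_{B(0,r)}]$; note that the bound should be uniform in $r$ because of the weak scaling \textbf{(WS)}, but in fact \autoref{prop:Exptau} as stated allows the constants to depend only on $\phi$ and $d$, so I must be careful that the implicit comparisons do not degrade with $r$. The natural route is to write $\E_0[\tau_{B(0,r)}]=\int_0^\infty \Pp_0(\tau_{B(0,r)}>t)\,dt$ and control $\Pp_0(\tau_{B(0,r)}>t)$ from above and below.

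For the \emph{upper bound} I would proceed as follows. By \autoref{CZlemma}, $\Pp_0(\tau_{B(0,r)}>t)=\int_{B(0,r)}p_{B(0,r)}(t,0,y)\,dy\le \int_{B(0,r)}p(t,0,y)\,dy$. Using the upper bound in \eqref{eq:hke}, for $t$ large relative to $\phi(r)$—say $t\ge \phi(r)$—one has $[\phi^{-1}(t)]^{-d}\le[\phi^{-1}(\phi(r))]^{-d}=r^{-d}$ up to scaling, and more importantly each factor $1\wedge \frac{t\phi^{-1}(t)}{|0^i-y^i|\phi(|0^i-y^i|)}$ is at most $1$, so $p(t,0,y)\le C_1[\phi^{-1}(t)]^{-d}$; integrating over $B(0,r)$ and using the scaling of $\phi^{-1}$ gives $\Pp_0(\tau_{B(0,r)}>t)\le c\,r^d[\phi^{-1}(t)]^{-d}$. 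Integrating this over $t\in[\phi(r),\infty)$ and using that $r\to\phi(r)$ scales with exponent at least $\la>0$ (so $\int_{\phi(r)}^\infty [\phi^{-1}(t)]^{-d}\,dt\lesssim r^{-d}\phi(r)$, which I would justify via the substitution $t=\phi(s)$ and \textbf{(WS)}), together with the trivial bound $\Pp_0(\tau_{B(0,r)}>t)\le 1$ on $[0,\phi(r)]$, yields $\E_0[\tau_{B(0,r)}]\le a_2\phi(r)$. Alternatively—and this may be cleaner—I would cite that such an upper bound for mean exit times from balls follows directly from the on-diagonal-type heat kernel upper bound in \eqref{eq:hke} via the standard argument (see e.g. the literature on jump processes), so \autoref{prop:Exptau}'s upper half is essentially a corollary of \cite[Theorem 1.1]{KW22}.

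For the \emph{lower bound}, two approaches are available. The first uses the heat kernel directly: $\E_0[\tau_{B(0,r)}]\ge \int_0^{\phi(r)}\Pp_0(\tau_{B(0,r)}>t)\,dt$, and by \autoref{CZlemma} combined with the lower bound in \eqref{eq:hke} restricted to $y\in B(0,r/4)$ (where each factor $1\wedge\frac{t\phi^{-1}(t)}{|y^i|\phi(|y^i|)}$ is bounded below by a positive constant once $t\ge c\phi(r)$, using $|y^i|<r/4$ and the scaling), one shows $p_{B(0,r)}(t,0,y)\ge \tfrac12 p(t,0,y)\ge c[\phi^{-1}(t)]^{-d}$ for $t$ comparable to $\phi(r)$—after first controlling the probability of exiting before time $\sim\phi(r)$, which requires a preliminary bound on $\Pp_0(\tau_{B(0,r)}\le t)$; this is somewhat circular. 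The cleaner second approach is a test-function/Dynkin argument: take $g\in C_c^2(\bR^d)$ with $g\equiv 1$ on $B(0,r/2)$, $\supp g\subset B(0,r)$, and $\|\partial^2 g\|_\infty\le c r^{-2}$, $\|\nabla g\|_\infty\le cr^{-1}$, $\|g\|_\infty\le 1$ (obtained by scaling a fixed bump function). By \eqref{cLig}, $\|\cL g\|_\infty\le c/\phi(r)$. Applying \eqref{e:SnS} with $S=\tau_{B(0,r)}$ and noting $g(0)=1$, $g(X_{\tau_{B(0,r)}})=0$ (the process exits $B(0,r)$, but one must be slightly careful since jumps can land inside—here $g\ge 0$ and $g=0$ outside $B(0,r)$, so $\E_0[g(X_{\tau_{B(0,r)}})]\le \E_0[g(X_{\tau_{B(0,r)}})\mathbf 1_{X_{\tau_{B(0,r)}}\in B(0,r)}]$; but actually for the lower bound on $\E[\tau]$ we want an inequality in the convenient direction), we get $-1\le \E_0[g(X_{\tau_{B(0,r)}})]-1=\E_0[\int_0^{\tau_{B(0,r)}}\cL g(X_s)\,ds]\ge -\|\cL g\|_\infty\E_0[\tau_{B(0,r)}]$, hence $\E_0[\tau_{B(0,r)}]\ge \phi(r)/c = a_1\phi(r)$. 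The main obstacle, and the step I would be most careful about, is ensuring all comparison constants depend only on $\phi$ (through $\la,\ua,\lC,\uC$) and $d$, and not on $r$: this is where the weak scaling condition \textbf{(WS)} and the scaling behavior of $\phi^{-1}$ must be invoked explicitly in the heat-kernel integration and in the estimate \eqref{cLig} applied to the rescaled test function.
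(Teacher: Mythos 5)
The paper itself contains no proof of this proposition: it is quoted verbatim from \cite[Proposition 4.2]{KW22}, so your attempt is necessarily an independent argument, and two of its steps do not hold as stated. For the \emph{upper bound}, the inequality $\int_{\phi(r)}^\infty [\phi^{-1}(t)]^{-d}\,dt\lesssim r^{-d}\phi(r)$ that you propose to justify from ({\bf WS}) is false unless $d>\ua$: by \eqref{e:invP} one only gets $[\phi^{-1}(t)]^{-d}\le c\,r^{-d}\left(\phi(r)/t\right)^{d/\ua}$, and $\int_{\phi(r)}^\infty t^{-d/\ua}\,dt=\infty$ whenever $d\le \ua$, which occurs exactly for $d=1$, $\ua\ge 1$ (e.g.\ $\phi(s)=s^{3/2}$, $d=1$). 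In that case the tail bound $\Pp_0(\tau_{B(0,r)}>t)\le c\,r^{d}[\phi^{-1}(t)]^{-d}$ is simply not integrable in $t$, so the primary argument collapses; the survival probability actually decays geometrically in units of $\phi(r)$, and the missing ingredient is the standard Markov-property iteration: pick $K=K(C_1,d)\ge1$ so that, by \eqref{eq:hke}, $\sup_{z\in B(x,r)}\Pp_z\big(\tau_{B(x,r)}>\phi(Kr)\big)\le \sup_{z}\Pp_z\big(X_{\phi(Kr)}\in B(z,2r)\big)\le C_1 c_d (2r)^d (Kr)^{-d}\le \tfrac12$, iterate to get $\Pp_x(\tau_{B(x,r)}>k\phi(Kr))\le 2^{-k}$, and conclude $\E_x[\tau_{B(x,r)}]\le 2\phi(Kr)\le 2\uC K^{\ua}\phi(r)$. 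Your fallback of ``citing the standard argument in the literature'' is not a proof and in effect re-cites \cite{KW22}, which is what the paper already does.

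For the \emph{lower bound}, the Dynkin argument is fine for $r<1$ (modulo applying \eqref{e:SnS} to $\tau\wedge n$ and letting $n\to\infty$, since \autoref{Dyk_F} requires $\E_x[S]<\infty$), but \eqref{cLig} is only available for $0<r<1$, precisely because $({\bf K}_\eta)$ controls $\kappa(x,x+h)-\kappa(x,x)$ only for $|h|\le 1$. For $r\ge 1$ the bound $\|\cL g_r\|_\infty\le c/\phi(r)$ you need cannot be extracted this way and is in general false: the first-order correction leaves a term of size $\|\nabla g_r\|_\infty\int_{1<|t|<r}|t|\,|\kappa(x,x+e_it)-\kappa(x,x)|\,\nu^1(|t|)\,dt\le c\,r^{-1}\int_1^r \phi(t)^{-1}dt$, which for $\phi(s)=s^{\alpha}$ with $\alpha>1$ is of order $r^{-1}\gg r^{-\alpha}=1/\phi(r)$ (a scaling computation for a stable-like kernel with merely H\"older $\kappa$ shows $\|\cL g_r\|_\infty$ really is of this order), so your argument only yields $\E_x[\tau_{B(x,r)}]\gtrsim r$ rather than $\gtrsim\phi(r)$ for large $r$, while the proposition claims all $r>0$. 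Within this paper's toolkit the cheap repair is \autoref{p:exit} (itself quoted from \cite{KW22} and valid for every $r>0$): with $A=1$, $B=\tfrac12$ it gives $\E_x[\tau_{B(x,r)}]\ge \varrho\,\phi(r)\,\Pp_x\big(\tau_{B(x,r)}\ge\varrho\phi(r)\big)\ge\tfrac12\varrho\,\phi(r)$. A final cosmetic point: through \eqref{cLig} your constants unavoidably depend on $\kappa_0,\kappa_1,\eta$ as well, not only on $(\phi,d)$ as displayed in the statement.
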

\medskip

A function $h:\bR^d \to \bR$ is said to be {\it harmonic} in the open set $D$ with respect to $X$  if for every open set $U\subset D$ whose closure is a compact subset of $D$, 
$\E_x[|h(X_{\tau_U})|]<\infty$ and 
\begin{equation}\label{eq:harm}
	h(x)=\E_x[h(X_{\tau_U})]\qquad \mbox{ for every } x\in U.
\end{equation}
It is  said  that $h$ is {\it regular harmonic} in $D$ with respect to $X$ if $h$ is harmonic in $D$ with respect to $X$ and \eqref{eq:harm} holds for $U=D$.

\medskip

We now have the H\"older continuity of harmonic functions according to \cite[Theorem 4.6]{KW22}. 
For any $x_0\in\bR^d$ and $r\in(0,1]$, suppose that $h$ is harmonic in $B(x_0,r)$ with respect to $X$ and bounded in $\bR^d$. Then there exist constants $a_3,\beta>0$ depending on $\phi$ and $d$ such that
\begin{equation*}
	\label{h:Holder}
	|h(x)-h(y)|\leq a_3\left(\frac{|x-y|}{r}\right)^\beta\|h\|_\infty\qquad
	\text{ for any }x,y\in B(x_0,r/2).
\end{equation*}
\begin{proposition}\label{prop:Ulambda}
	Let $r\in(0,1]$ and $B(x_0,r)\subset U$, then there exists a constant $c=c(\phi, \lambda, d)>0$ such that for any $x,y\in B(x_0,r/2)$ and $f\in L^\infty(U)\cap L^2(U)$,
	$$
	|U^\lambda f(x)-U^\lambda f(y)|\leq c\left(\phi(r)+\frac{|x-y|^\beta}{r^\beta}\right)\|f\|_{L^\infty(U)}.
	$$
\end{proposition}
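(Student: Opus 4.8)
The plan is to localize the resolvent $U^\lambda f$ at the ball $B:=B(x_0,r)$, split off a bounded $X$-harmonic piece to which the H\"older estimate quoted just before \autoref{prop:Ulambda} applies, and absorb everything left over into an error of size $\phi(r)\|f\|_{L^\infty(U)}$. Two elementary bounds are recorded first. Since $U^\lambda f(x)=\E_x[\int_0^\infty e^{-\lambda t}f(X^U_t)\,dt]$ with $|f|\le\|f\|_{L^\infty(U)}$ on $U$ and $f(\partial)=0$, we have $\|U^\lambda f\|_\infty\le \lambda^{-1}\|f\|_{L^\infty(U)}$; moreover $U^\lambda f\equiv 0$ on $\bR^d\setminus\overline{U}$. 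For $x\in B(x_0,r/2)$ we have $B(x_0,r)\subset B(x,3r/2)$, hence $\tau_B\le\tau_{B(x,3r/2)}$, and \autoref{prop:Exptau} together with $({\bf WS})$ gives $\E_x[\tau_B]\le a_2\phi(3r/2)\le a_2\uC(3/2)^{\ua}\phi(r)=:c_0\phi(r)$; in particular $\tau_B<\infty$ $\Pp_x$-a.s.

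Next I would split the time integral defining $U^\lambda f(x)$ at $\tau_B$. On $\{t<\tau_B\}$ the killed process $X^U$ coincides with $X$ because $B\subset U$, so the first part equals $U_B^\lambda f(x):=\E_x[\int_0^{\tau_B}e^{-\lambda t}f(X_t)\,dt]$ and obeys $|U_B^\lambda f(x)|\le\|f\|_{L^\infty(U)}\,\E_x[\tau_B]\le c_0\phi(r)\|f\|_{L^\infty(U)}$. For the remaining part, the substitution $t=\tau_B+s$ and the strong Markov property of $X^U$ at $\tau_B$ (note $\tau_B\le\tau_U$, and that $U^\lambda f$ vanishes off $U$) give $\E_x[\int_{\tau_B}^\infty e^{-\lambda t}f(X^U_t)\,dt]=\E_x[e^{-\lambda\tau_B}U^\lambda f(X_{\tau_B})]$, so that $U^\lambda f(x)=U_B^\lambda f(x)+\E_x[e^{-\lambda\tau_B}U^\lambda f(X_{\tau_B})]$. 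Using $0\le 1-e^{-\lambda\tau_B}\le\lambda\tau_B$ together with the two bounds above, $|\E_x[(e^{-\lambda\tau_B}-1)U^\lambda f(X_{\tau_B})]|\le\lambda\|U^\lambda f\|_\infty\E_x[\tau_B]\le c_0\phi(r)\|f\|_{L^\infty(U)}$, whence
$$\bigl|U^\lambda f(x)-h_0(x)\bigr|\ \le\ 2c_0\,\phi(r)\,\|f\|_{L^\infty(U)},\qquad\text{with }\ h_0(x):=\E_x\bigl[U^\lambda f(X_{\tau_B})\bigr].$$

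It remains to control $h_0$, which I regard as defined on all of $\bR^d$ by the same formula; then $h_0\equiv U^\lambda f$ off $B$ and $\|h_0\|_\infty\le\lambda^{-1}\|f\|_{L^\infty(U)}$. For every open $V$ with $\overline{V}$ a compact subset of $B$, the strong Markov property at $\tau_V\le\tau_B$ gives $\E_z[h_0(X_{\tau_V})]=\E_z[U^\lambda f(X_{\tau_B})]=h_0(z)$, so $h_0$ is harmonic in $B(x_0,r)$ with respect to $X$ and bounded on $\bR^d$. The H\"older estimate for bounded harmonic functions (the displayed inequality preceding \autoref{prop:Ulambda}, from \cite[Theorem 4.6]{KW22}) then yields, for $x,y\in B(x_0,r/2)$,
$$|h_0(x)-h_0(y)|\ \le\ a_3\Bigl(\frac{|x-y|}{r}\Bigr)^{\beta}\|h_0\|_\infty\ \le\ \frac{a_3}{\lambda}\Bigl(\frac{|x-y|}{r}\Bigr)^{\beta}\|f\|_{L^\infty(U)}.$$
Combining this with the previous display applied at both $x$ and $y$, the triangle inequality gives $|U^\lambda f(x)-U^\lambda f(y)|\le(4c_0+a_3/\lambda)\bigl(\phi(r)+|x-y|^\beta/r^\beta\bigr)\|f\|_{L^\infty(U)}$, which is the claim with $c=c(\phi,\lambda,d)$.

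The only structural input is the H\"older regularity of bounded $X$-harmonic functions, which is already available; everything else is the standard killed-process calculus. Accordingly the main (and rather minor) obstacle is bookkeeping: justifying the pointwise/measurable manipulations of $U^\lambda f$ and its values on $\partial U$ and at $X_{\tau_B}$ (using that the jump kernel is absolutely continuous, so $\Pp_x(X_{\tau_B}\in\partial U)=0$), checking the applicability of the strong Markov property to $X^U$ at $\tau_B$, and tracking constants so that both the parabolic-scale error $\phi(r)$ and the H\"older error $|x-y|^\beta/r^\beta$ carry constants depending only on $\phi,\lambda,d$.
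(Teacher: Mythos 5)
Your proof is correct and takes essentially the same route as the paper: split the resolvent at the exit time of a ball, bound the two error terms by $\E_x[\tau]\,(\|f\|_\infty+\lambda\|U^\lambda f\|_\infty)\lesssim \phi(r)\|f\|_{L^\infty(U)}$ via \autoref{prop:Exptau}, and apply the H\"older estimate for bounded harmonic functions to the remaining piece. The only (cosmetic) difference is that you use the single fixed ball $B(x_0,r)$ rather than the paper's moving balls $B(x,r)$ and $B(y,r)$, which in fact makes the harmonicity of the boundary term cleaner; just define $h_0(z)=\E_z\big[(\mathbbm 1_U\,U^\lambda f)(X_{\tau_B})\big]$ (equivalently use $X^U_{\tau_B}$) rather than appealing to $\Pp_x(X_{\tau_B}\in\partial U)=0$, which is not needed and not obvious for general $U$.
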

\begin{proof}\hspace{-.05in}.
	For simplicity of notations, we write $\tau_x:=\tau_{B(x,r)}$. For any $f\in L^\infty(U)\cap L^2(U)$, by the  strong Markov property, we have
	$$U^\lambda f(x)=\E_x\left[\int_0^{\tau_x}e^{-\lambda t}f(X_t^U)dt\right]+\E_x\left[(e^{-\lambda\tau_x}-1)U^\lambda f(X_{\tau_x}^U)\right]+\E_x\left[U^{\lambda}f(X_{\tau_x}^U)\right].$$
	We get similar expressions when $x$ is replaced by $y$. By \autoref{prop:Exptau}, the mean value theorem,  and the fact that $\|U^\lambda f\|_\infty\leq \frac{1}{\lambda}\|f\|_\infty$, we obtain that
	\begin{align*}
		|U^\lambda f(x)-U^\lambda f(y)|\leq&\ (\E_x[\tau_x]
		+\E_y[\tau_y])(\|f\|_\infty+\lambda\|U^\lambda f\|_\infty) +\Big|\E_x[U^{\lambda}f(X_{\tau_x}^U)]-\E_y[U^{\lambda}f(X_{\tau_y}^U)]\Big|\\
		\leq&\ 4 a_2
		\phi(r)\|f\|_\infty+|\E_x[U^{\lambda}f(X_{\tau_x}^U)]-\E_y[U^{\lambda}f(X_{\tau_y}^U)]|.
	\end{align*}
	Since $z\mapsto \E_z[U^{\lambda}f(X_{\tau_z}^U)]$ is bounded and harmonic in $B(x_0,r)$, by the H\"older continuity of harmonic functions in \eqref{h:Holder}, for any $x,y\in B(x_0,r/2)$,
	$$\Big|\E_x[U^{\lambda}f(X_{\tau_x}^U)]-\E_y[U^{\lambda}f(X_{\tau_y}^U)]\Big|\leq a_3\frac{|x-y|^\beta}{r^\beta}\|U^\lambda f\|_\infty.$$
	Combining the above estimates, we obtain that for any $x,y\in B(x_0,r/2)$,
	$$|U^\lambda f(x)-U^\lambda f(y)|\leq c\left(\phi(r)+\frac{|x-y|^\beta}{r^\beta}\right)\|f\|_{L^\infty(U)}$$
	where $c$ is a constant depending on $\phi, d,\lambda$.
\end{proof}

\medskip
Since $\{P_t^U,t>0\}$ is a heat semigroup on $L^2(U)$, according to spectral theory, there exists a spectral family $\{E_\mu,\mu\in\bR\}$ such that
$$f=\int_0^\infty dE_\mu(f),\ P_t^Uf=\int_0^\infty e^{-\mu t}dE_\mu (f)\ \text{ and }\ U^\lambda f=\int_0^\infty \frac{1}{\lambda+\mu}dE_\mu(f).$$
For any $f\in L^\infty(U)\cap L^2(U)$, set 
$$h:=h(f):=\int_0^\infty (\lambda+\mu)e^{-\mu t}dE_\mu(f).$$
For any $g\in L^1(\bR^d)$, by \eqref{eq:hke}, we have that for any $t>0$,
$$\|P_t^Ug\|_\infty=\bigg\|\int_Up_U(t,\cdot,z)g(z)dz\bigg\|_\infty\leq\bigg\|\int_U|p(t,\cdot,z)||g(z)|dz\bigg\|_\infty\leq c[\phi^{-1}(t)]^{-d}\|g\|_1.$$
Correspondingly,
$$\|P_t^Ug\|_2\leq\sqrt{\|P_t^Ug\|_1\|P_t^Ug\|_\infty}\leq\sqrt{\|g\|_1c[\phi^{-1}(t)]^{-d}\|g\|_1}\leq \sqrt{c}[\phi^{-1}(t)]^{-d/2}\|g\|_1.$$
Thus, we conclude that $P_t^Ug\in L^\infty(\bR^d)\cap L^2 (\bR^d)$. 
Then by Cauchy-Schwartz inequality  with the fact that
$\sup_{\{\mu>0\}}(\lambda+\mu)e^{-\mu t/2}\le 2(t^{-1}\vee\lambda)$,
\begin{align*}
	\langle h,g\rangle=&\int_0^\infty (\lambda +\mu)e^{-\mu t}d\langle E_\mu(f),E_\mu (g)\rangle\\
	\leq &\left(\int_0^\infty (\lambda+\mu)e^{-\mu t}d\langle E_\mu(f),E_\mu (f)\rangle\right)^{1/2}\left(\int_0^\infty (\lambda+\mu)e^{-\mu t}d\langle E_\mu(g),E_\mu (g)\rangle\right)^{1/2}\\
	\leq&\ 2 (t^{-1}\vee\lambda)\left(\int_0^\infty d\langle E_\mu(f),E_\mu (f)\rangle\right)^{1/2}\left(\int_0^\infty e^{-\mu t/2}d\langle E_\mu(g),E_\mu (g)\rangle\right)^{1/2}\\
	=&\ 2(t^{-1}\vee\lambda)\|f\|_{L^2(U)}\|P_{t/4}^Ug\|_2\leq c (t^{-1}\vee\lambda)[\phi^{-1}(t)]^{-d/2}\|f\|_{L^2(U)}\|g\|_1.
\end{align*}
Since we take $g\in L^1(\bR^d)$ {to be arbitrary}, we get $$\|h\|_\infty\leq c (t^{-1}\vee\lambda)[\phi^{-1}(t)]^{-d/2}\|f\|_{L^2(U)}.$$

\begin{theorem}\label{thm:regularity}
	For $f\in L^2(U) \cap L^\infty (U)$, $P_t^Uf$ is equal a.e. to a function that is (locally) H\"older continuous. Hence, we can refine $p_U(t,x,y)$ to be jointly (locally) H\"older continuous for any $t>0$ and $x,y\in U$.
\end{theorem}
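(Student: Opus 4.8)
The plan is to decompose $P_t^U f$ in a way that separates a genuinely smooth (resolvent-type) part from a remainder that is exponentially small in $t^{-1}$, so that H\"older continuity of the resolvent (established in \autoref{prop:Ulambda}) together with the uniform bound on $h=h(f)$ derived just above transfers to $P_t^U f$. Concretely, fix $t>0$ and $\lambda>0$. Using the spectral representation, write for a.e. $x$
\[
P_t^U f(x)=\int_0^\infty e^{-\mu t}\,dE_\mu(f)(x)=\int_0^\infty \frac{1}{\lambda+\mu}\,\big(\lambda+\mu\big)e^{-\mu t}\,dE_\mu(f)(x)=U^\lambda\big(h(f)\big)(x),
\]
so that $P_t^U f = U^\lambda h$ with $h=h(f)\in L^\infty(U)\cap L^2(U)$ and $\|h\|_\infty\le c\,(t^{-1}\vee\lambda)[\phi^{-1}(t)]^{-d/2}\|f\|_{L^2(U)}$, as shown in the computation preceding the statement. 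Here one should check that $h\in L^2(U)$ as well (since $(\lambda+\mu)e^{-\mu t}$ is bounded on $(0,\infty)$, $h=\int_0^\infty(\lambda+\mu)e^{-\mu t}\,dE_\mu(f)$ has $\|h\|_2\le c(t)\|f\|_2<\infty$), so that $U^\lambda h$ is well defined pointwise on $\bR^d\setminus\partial U$ by the discussion following the definition of $U^\lambda$.

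Next I would apply \autoref{prop:Ulambda} to the function $h$: for any ball $B(x_0,r)\subset U$ with $r\in(0,1]$ and any $x,y\in B(x_0,r/2)$,
\[
\big|P_t^U f(x)-P_t^U f(y)\big|=\big|U^\lambda h(x)-U^\lambda h(y)\big|\le c\Big(\phi(r)+\frac{|x-y|^\beta}{r^\beta}\Big)\|h\|_{L^\infty(U)}.
\]
Since $x_0$ and $r$ are arbitrary subject to $B(x_0,r)\subset U$, this shows that (after modification on a null set) $P_t^U f$ is locally H\"older continuous of order $\beta$ on $U$, with a local modulus depending on $t$, $\lambda$, $\phi$, $d$ and $\operatorname{dist}(x_0,\partial U)$; one may simply take $\lambda=1$ throughout. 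This proves the first assertion. For the second assertion, recall from \autoref{CZlemma} that $y\mapsto p_U(t,x,y)$ is, for each fixed $t$ and a.e.\ $x$, the density of $P_t^U$; using the Chapman--Kolmogorov identity $p_U(t,x,y)=\int_{\bR^d}p_U(t/2,x,z)p_U(t/2,z,y)\,dz=P_{t/2}^U\big(p_U(t/2,x,\cdot)\big)(y)$ and the fact (from \eqref{eq:hke}) that $p_U(t/2,x,\cdot)\in L^2(U)\cap L^\infty(U)$, the first part gives local H\"older continuity of $p_U(t,x,\cdot)$ in the $y$-variable; symmetry of $p_U$ (\autoref{CZlemma}) gives it in $x$; and joint continuity in $(t,x,y)$ follows by a further application of Chapman--Kolmogorov, writing $p_U(t,x,y)=\int p_U(t-s,x,z)p_U(s,z,y)\,dz$ for $s$ slightly less than $t$ and using the local boundedness and equicontinuity supplied by the estimate above together with the continuity of $t\mapsto P_t^U g$ in $L^2$.

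The main obstacle I anticipate is not any single hard estimate but the bookkeeping needed to pass from the a.e.-defined semigroup to a genuinely pointwise, jointly continuous kernel: one must verify that the H\"older modulus obtained for $P_t^U f$ is locally uniform in $t$ (so that the time variable can be included), that the various identities in \autoref{CZlemma} which hold "for a.e.\ $x,y$" can be upgraded to everywhere once a continuous refinement is fixed, and that the factor $(t^{-1}\vee\lambda)[\phi^{-1}(t)]^{-d/2}$ in $\|h\|_\infty$ stays bounded on compact time intervals away from $0$, which it does by the scaling $\phi^{-1}$. These are standard arguments (as in \cite{MR1329992}), so I would carry them out concisely, but they are the only place where care is genuinely required.
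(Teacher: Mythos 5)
Your proposal follows the paper's own route exactly: the identity $P_t^Uf=U^\lambda h$ with $h=h(f)$, the bound $\|h\|_\infty\le c\,(t^{-1}\vee\lambda)[\phi^{-1}(t)]^{-d/2}\|f\|_{L^2(U)}$ from the spectral computation, \autoref{prop:Ulambda}, and then Chapman--Kolmogorov with $\wt f(z)=p_U(t/2,z,y)$ plus symmetry to pass to the kernel. There is, however, one step that does not work as written. From the estimate
\begin{equation*}
|P_t^Uf(x)-P_t^Uf(y)|\le c\Big(\phi(r)+\frac{|x-y|^\beta}{r^\beta}\Big)\|h\|_{L^\infty(U)},\qquad x,y\in B(x_0,r/2),
\end{equation*}
with $r$ fixed, you cannot conclude local H\"older continuity \emph{of order $\beta$} --- in fact you cannot conclude continuity at all, since the additive term $\phi(r)$ does not vanish as $|x-y|\to0$. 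The missing ingredient is to couple $r$ to $|x-y|$: for $x,y$ in a compact $K\subset U$ with $|x-y|$ small relative to $\mathrm{dist}(K,\partial U)$ one takes $r=|x-y|^{1/2}$, so that $({\bf WS})$ gives $\phi(r)\le c|x-y|^{\la/2}$ and the second term becomes $|x-y|^{\beta/2}$, yielding exponent $(\la\wedge\beta)/2$ rather than $\beta$; the complementary case $|x-y|\gtrsim \mathrm{dist}(K,\partial U)$ is handled directly by the on-diagonal bound $|P_t^Uf|\le c[\phi^{-1}(2t)]^{-d/2}\|f\|_{L^2(U)}$ from \eqref{eq:hke}. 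This is precisely what the paper's proof does; with that adjustment (and the correspondingly degraded exponent propagated through the Chapman--Kolmogorov step) the rest of your argument goes through. Your extra remarks about including the time variable are not needed for the statement, which asserts joint H\"older continuity in $(x,y)$ for each fixed $t>0$.
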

\begin{proof}\hspace{-.05in}.
	The above discussion and \autoref{prop:Ulambda} yield that, for fixed $t>0$, $r\in(0,1]$ and $B(x_0,r)\subset U$, there exists a constant $c_1$ such that for any $x,y\in B(x_0,r/2)$,
	\begin{align}\label{Ptdiff}
		|P_t^Uf(x)-P_t^Uf(y)|=&|U^\lambda h(x)-U^\lambda h(y)|\nn\\
		\leq &\ c_1\left(\phi(r)+\frac{|x-y|^\beta}{r^\beta}\right)\|h\|_{L^\infty(U)}\nn\\
		\leq&\ c_1 (t^{-1}\vee\lambda)[\phi^{-1}(t)]^{-d/2}\left(\phi(r)+\frac{|x-y|^\beta}{r^\beta}\right)\|f\|_{L^2(U)}.
	\end{align}
	For any fixed compact set $K\subset U$ and {any points} $x,y \in K$, let $x_0=x$ and $\delta_K=\frac{1}{4}(\text{dist}(K,\partial U)\wedge\text{dist}(K,\partial U)^2\wedge 1)$.
	If $|x-y|<\delta_K$, then with $r:=|x-y|^{1/2}$, we have 
	$$\frac{1}{2}\text{dist}(K,\partial U)\geq \sqrt{\delta_K}>|x-y|^{1/2}=r\geq2|x-y|.$$
	Then applying \eqref{Ptdiff} with $x_0=x$ and $r=|x-y|^{1/2}$, and  ({\bf WS}) with $r<1$, 
	$$|P_t^Uf(x)-P_t^Uf(y)|\leq  c_2 (t^{-1}\vee\lambda)[\phi^{-1}(t)]^{-d/2}\left(|x-y|^{\la/2}+|x-y|^{\beta/2}\right)\|f\|_{L^2(U)}.$$
	If $|x-y|\geq\delta_K$, by the definition of $P_t^Uf$ and \eqref{eq:hke}, we have that for any $t>0$ and $z\in U$,
	\begin{align*}
		|P_t^Uf(z)|=&\left|\int_Up_U(t,z,y)f(y)dy\right|\leq\sqrt{\int_Up_U(t,z,y)^2dy}\|f\|_{L^2(U)}\\
		\leq&\sqrt{p(2t,z,z)}\|f\|_{L^2(U)}\leq \sqrt{c_3}[\phi^{-1}(2t)]^{-d/2}\|f\|_{L^2(U)}.
	\end{align*}
	Hence, for $x,y\in K$ with $|x-y|\geq\delta_K$, 
	\begin{align*}
		|P_t^Uf(x)-P_t^Uf(y)|\leq&|P_t^Uf(x)|+|P_t^Uf(y)|\leq 2\sqrt{c_3}[\phi^{-1}(2t)]^{-d/2}\|f\|_{L^2(U)}\\
		\leq&\sqrt{c_3}[\phi^{-1}(2t)]^{-d/2}\left(\frac{|x-y|^{\la/2}}{\delta_K^{\la/2}}+\frac{|x-y|^{\beta/2}}{\delta_K^{\beta/2}}\right)\|f\|_{L^2(U)}.
	\end{align*}
	We have our first assertion by combining the above two discussions and ({\bf WS}), that is, for any compact set $K\subset U$ and $x,y\in K$,
	\begin{equation}
		\label{eq:PtHolder}
		|P_t^Uf(x)-P_t^Uf(y)|\leq c_4(\phi, d,\text{dist}(K,\partial U))|x-y|^{(\la\wedge\beta)/2}[\phi^{-1}(t)]^{-d/2}\|f\|_{L^2(U)}.
	\end{equation}
	Let ${\wt f}(z)=p_U(t/2,z,y)$. According to \autoref{CZlemma}, since
	$$p_U(t,x,y)=\int_Up_U(t/2,x,z)p_U(t/2,z,y)dz=P_{t/2}^U{\wt f}(x),$$
	by \eqref{eq:PtHolder} and \eqref{eq:hke},
	we have that for any compact set $K\subset U$ and $x,x',y\in K$,
	\begin{align*}
		|p_U(t,x,y)-p_U(t,x',y)|=&|P_{t/2}^U{\wt f}(x)-P_{t/2}^U{\wt f}(x')|\\
		\leq&c_4|x-x'|^{(\la\wedge\beta)/2}[\phi^{-1}(t)]^{-d/2}\|p_U(t/2,\cdot,y)\|_{L^2(U)}\\
		\leq&c_5|x-x'|^{(\la\wedge\beta)/2}[\phi^{-1}(t)]^{-d/2}\sqrt{p(t,y,y)}\\
		\leq&c_6|x-x'|^{(\la\wedge\beta)/2}[\phi^{-1}(t)]^{-d}.
	\end{align*}
	With the symmetry property in \autoref{CZlemma}, $p_U(t,x,y)$ is jointly H\"older continuous with constants independent of $x,y$ on $K\times K$, hence, it is jointly (locally) H\"older continuous on $U\times U$.

\end{proof}

\medskip

\subsection{The upper bound}\label{subsec:upper}
We first obtain a key lemma, \autoref{lemma:pU}, that serves as a guideline to obtain the upper bound for $p_D(t,x,y)$. Together with \autoref{L:exit} and \eqref{eq:hke}, then we derive the upper bound with one boundary decay, see \autoref{prop:upper1}. Finally, we prove \autoref{t:nmain}.

\begin{lemma}\label{lemma:pU}
	Let $U\subset \Rd$ be an open set and $U_1,U_3\subset U$ be disjoint open sets with $\text{dist}(U_1,U_3)>0$.  Define $U_2:=U\setminus(U_1\cup U_3)$. Then for any $t>0,x\in U_1$ and $y\in U_3$,
	\begin{align}\label{eq:pU}
		&p_U(t,x,y)
		\leq \ \Pp_x(X_{\tau_{U_1}}\in U_2)\sup_{t/2<s<t,z\in U_2}p_U(s,z,y)+\frac{2}{t}\ \E_x[\tau_{U_1}]\sup_{z\in U_1}p_U(t/2,z,y)\\
		&+\int_{0}^{t/2}\int_{U_1}p_{U_1}(s,x,u)\Bigg(\sum_{i=1}^d\int\limits_{\{h\in \bR: u+e_ih\in U_3\}}\hspace{-.2in}p_U(t-s,u+e_ih,y)dh\cdot\hspace{-.1in}\sup_{\substack{u\in U_1\\ u+e_ih\in U_3}}\hspace{-.1in}J(u,u+e_ih)\Bigg)duds.\nn
	\end{align}
\end{lemma}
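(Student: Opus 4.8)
The plan is to run the classical ``strong Markov at $\tau_{U_1}$'' decomposition of $p_U(t,x,y)$, now with the coordinate-wise Lévy system \eqref{eq:LSd} in place of the isotropic one used in \cite[Lemma 5.1]{GKK20}. The starting point is the identity
\[
 p_U(t,x,y)=\E_x\big[p_U(t-\tau_{U_1},X_{\tau_{U_1}},y);\ \tau_{U_1}<t\big],\qquad x\in U_1,\ y\in U_3 .
\]
First I would prove this. Since $x\in U_1\subset U$ we have $\tau_{U_1}\le\tau_U$, so for any Borel $A\subset U_3$, \autoref{CZlemma} gives $\int_A p_U(t,x,z)\,dz=\Pp_x(t<\tau_U,\,X_t\in A)$, and splitting on $\{t<\tau_{U_1}\}$ versus $\{\tau_{U_1}\le t<\tau_U\}$ the first piece vanishes (on $\{t<\tau_{U_1}\}$ one has $X_t\in U_1$, disjoint from $A\subset U_3$) while the second is handled by the strong Markov property at $\tau_{U_1}$ together with \autoref{CZlemma} again, the events $\{\tau_{U_1}=\tau_U\}$ and $\{\tau_{U_1}=t\}$ contributing nothing. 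Letting $A$ range over Borel subsets of $U_3$ and passing to the continuous versions furnished by \autoref{thm:regularity} yields the identity.

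Next I would split this expectation according to $\{\tau_{U_1}<t/2\}$ versus $\{t/2\le\tau_{U_1}<t\}$. On $\{\tau_{U_1}<t/2\}$ one has $X_{\tau_{U_1}}\in U_2\cup U_3\cup U^c$, and the $U^c$ part drops out because $p_U(s,z,\cdot)\equiv 0$ for $z\in U^c$. For the $U_2$ part, since $t-\tau_{U_1}\in(t/2,t)$ there, it is bounded by $\Pp_x(X_{\tau_{U_1}}\in U_2)\,\sup_{t/2<s<t,\,z\in U_2}p_U(s,z,y)$, which is the first term in \eqref{eq:pU}.

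For the $U_3$ part of $\{\tau_{U_1}<t/2\}$ — the case where the process jumps straight from $U_1$ into $U_3$ — I would apply \eqref{eq:LSd} with $S=\tau_{U_1}$ and $f(s,u,w)=\1_{\{s<t/2\}}\1_{\{w\in U_3\}}\,p_U(t-s,w,y)$. On the left-hand side only the jump at time $\tau_{U_1}$ can contribute, because every earlier jump lands in $U_1$, which is disjoint from $U_3$ (so the indicator $\1_{\{u\in U_1\}}$ is unnecessary and no boundary subtlety at $\partial U_1$ arises); thus the left-hand side equals $\E_x[p_U(t-\tau_{U_1},X_{\tau_{U_1}},y);\ \tau_{U_1}<t/2,\ X_{\tau_{U_1}}\in U_3]$. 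On the right-hand side the time integral runs over $[0,\tau_{U_1}\wedge(t/2))$; writing this as $\int_0^{t/2}\1_{\{s<\tau_{U_1}\}}\,ds$, applying Fubini, invoking $\E_x[\1_{\{s<\tau_{U_1}\}}g(X_s)]=\int_{U_1}p_{U_1}(s,x,u)g(u)\,du$ (again \autoref{CZlemma}, now for $U_1$), and finally bounding $J(u,u+e_ih)\le\sup_{u\in U_1,\,u+e_ih\in U_3}J(u,u+e_ih)$ to pull it outside the $dh$-integral, one recovers precisely the third term of \eqref{eq:pU}.

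Finally, on $\{t/2\le\tau_{U_1}<t\}$ the process sits in $U_1$ at time $t/2$, so I would apply the Markov property at the deterministic time $t/2$: with $\widetilde\tau:=\tau_{U_1}-t/2\in(0,t/2)$ the relevant inner expectation is $\E_{X_{t/2}}[p_U(t/2-\widetilde\tau,X_{\widetilde\tau},y);\ \widetilde\tau<t/2]$, which by the identity of the first step (with $t/2$ in place of $t$) equals $p_U(t/2,X_{t/2},y)$. Hence this piece equals $\E_x[p_U(t/2,X_{t/2},y);\ \tau_{U_1}\ge t/2]\le\sup_{z\in U_1}p_U(t/2,z,y)\,\Pp_x(\tau_{U_1}\ge t/2)$, and Markov's inequality $\Pp_x(\tau_{U_1}\ge t/2)\le\frac2t\,\E_x[\tau_{U_1}]$ gives the second term. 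Adding the three bounds proves \eqref{eq:pU}. I expect the only genuinely delicate point to be the first step — checking that the ``direct'' contribution vanishes and that $\{\tau_{U_1}=\tau_U\}$ and $\{\tau_{U_1}=t\}$ are negligible; everything afterwards is bookkeeping with the Lévy system and elementary estimates.
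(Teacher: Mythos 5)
Your overall plan is sound and, modulo bookkeeping, it reproduces the paper's three ingredients: the strong Markov property at $\tau_{U_1}$, the L\'evy system \eqref{eq:LSd} for the jump from $U_1$ directly into $U_3$, and Markov's inequality for the event that the process remains in $U_1$ past time $t/2$; your three resulting bounds are exactly the three terms of \eqref{eq:pU}, and the L\'evy-system computation (only the exit jump contributes, $\E_x[\1_{\{s<\tau_{U_1}\}}g(X_s)]=\int_{U_1}p_{U_1}(s,x,u)g(u)\,du$, pulling $\sup J$ out of the $dh$-integral) is correct. Two remarks: your test function does not vanish on the diagonal as \eqref{eq:LSd} requires, but inserting the factor $\1_{\{u\in \overline{U_1}\}}$ fixes this at no cost since $\overline{U_1}\cap U_3=\emptyset$; and the paper organizes the argument differently, working with $P_t^U f$ for a generic nonnegative $f$, bounding the three pieces at the semigroup level, substituting $f=p_U(t,\cdot,y)$ only at the end, and doubling the time by the Chapman--Kolmogorov identity before relabelling $2t\mapsto t$ --- precisely so that no pointwise heat-kernel identity evaluated at a random time is ever needed.

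That is where your proposal has a genuine gap: the opening identity $p_U(t,x,y)=\E_x[p_U(t-\tau_{U_1},X_{\tau_{U_1}},y);\,\tau_{U_1}<t]$, which you use a second time (with $t/2$, started from $X_{t/2}$). Your derivation yields it, for fixed $t,x$, only for a.e.\ $y\in U_3$, and the proposed upgrade ``pass to the continuous versions furnished by \autoref{thm:regularity}'' does not work as stated: \autoref{thm:regularity} gives continuity of the left-hand side $p_U(t,x,\cdot)$, but nothing is known about continuity in $y$ of the expectation on the right, and this is not a routine domination argument here. On $\{\tau_{U_1}<t\}$ the time $t-\tau_{U_1}$ can be arbitrarily small while $X_{\tau_{U_1}}$ can jump arbitrarily close to $y$, and for this anisotropic kernel \eqref{eq:hke} gives, for $z=y+\epsilon e_1$, $p(s,z,y)\asymp s[\phi^{-1}(s)]^{-(d-1)}/\bigl(\epsilon\phi(\epsilon)\bigr)$, which tends to infinity as $s\downarrow 0$ whenever $d-1>\ua$; so there is no obvious dominating function, and even pointwise finiteness of the right-hand side at the given $y$ is not a priori clear. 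You should either prove this hierarchy identity properly (for instance, establish its semigroup version by the strong Markov property, where it is clean, and then deduce the pointwise statement by an additional limiting/continuity argument), or restructure along the paper's lines, in which case the identity is never needed and the remainder of your computation goes through essentially verbatim.
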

\begin{proof}\hspace{-.05in}.
	Let $x\in U_1$ and  $f$ be a non-negative function in $L^1(U)\cap L^\infty(U)$.
	By the strong Markov property of $X$, we first obtain
	\begin{align}\label{eq:smp1}
		&P_t^Uf(x)=\E_x[f(X_t);t<\tau_U]=\E_x[f(X_t);t<\tau_{U_1}]+\E_x[f(X_t^U);\tau_{U_1}<t]\\
		=&\ P_t^{U_1}f(x)+\E_x[P_{t-\tau_{U_1}}^Uf(X_{\tau_{U_1}});\tau_{U_1}<t,X_{\tau_{U_1}}\in U_2]	 +\E_x[P_{t-\tau_{U_1}}^Uf(X_{\tau_{U_1}});\tau_{U_1}<t,X_{\tau_{U_1}}\in U_3]\nn\\
		\leq&\  P_t^{U_1}f(x)+\sup_{s<t,z\in U_2}P_s^Uf(z)\cdot\Pp_x(X_{\tau_{U_1}}\in U_2)+\E_x[P_{t-\tau_{U_1}}^Uf(X_{\tau_{U_1}});\tau_{U_1}<t,X_{\tau_{U_1}}\in U_3].\nn
	\end{align}
	By  the L\'evy system of $X$ in \eqref{eq:LSd}, we have
	\begin{align}\label{eq:ls2}
		&	\E_x[P_{t-\tau_{U_1}}^Uf(X_{\tau_{U_1}});\tau_{U_1}<t,X_{\tau_{U_1}}\in U_3]\\
		=&\ \int_0^t\int_{U_1}p_{U_1}(s,x,u)\Bigg(\sum_{i=1}^d\int\limits_{\{ h\in \bR:u+e_ih\in U_3\}}\hspace{-.2in}P_{t-s}^Uf(u+e_ih)J(u,u+e_ih)dh\Bigg)duds\nn\\
		\leq&\  \int_0^t\int_{U_1}p_{U_1}(s,x,u)\Bigg(\sum_{i=1}^d\int\limits_{ \{h\in \bR: u+e_ih\in U_3\}}\hspace{-.2in}P_{t-s}^Uf(u+e_ih)dh\cdot\hspace{-.1in} \sup_{\substack{u\in U_1\\ u+e_ih\in U_3}}\hspace{-.1in}J(u,u+e_ih)\Bigg)duds.\nn
	\end{align}
	Letting $f(\cdot)=p_U(t,\cdot,y)$ in \eqref{eq:smp1}  and \eqref{eq:ls2}, the semigroup property implies that for any $y\in U_3$, 
	\begin{align}\label{eq:smp2}
		&	p_U(2t,x,y)\leq \int_{\Rd}p_{U_1}(t,x,z)p_U(t,z,y)dz+\sup_{t<s<2t,z\in U_2}p_U(s,z,y)\cdot\Pp_x(X_{\tau_{U_1}}\in U_2)\\
		&+\int_0^t\int_{U_1}p_{U_1}(s,x,u)\times\Bigg(\sum_{i=1}^d\int\limits_{\{h\in \bR: u+e_ih\in U_3\}}\hspace{-.3in}p_U(2t-s,u+e_ih,y)dh\cdot \hspace{-.1in}\sup_{\substack{u\in U_1\\ u+e_ih\in U_3}}\hspace{-.1in}J(u,u+e_ih)\Bigg)duds.\nn
	\end{align}
	Since the first term in \eqref{eq:smp2} satisfies
	\begin{align*}
		\int_{\Rd}p_{U_1}(t,x,z)p_U(t,z,y)dz \leq  \sup_{z\in U_1}p_U(t,z,y)\cdot\Pp_x(\tau_{U_1}>t)	\leq  \sup_{z\in U_1}p_U(t,z,y)\frac{\E_x[\tau_{U_1}]}{t},
	\end{align*}
	we achieve the result by replacing $2t$ by $t$ in \eqref{eq:smp2}.
\end{proof}

\begin{lemma}\label{lemma:tau}
	For any $T>0$, there exists a constant  $C_{\ref{lemma:tau}}=C_{\ref{lemma:tau}}(\phi,\kappa_0,\kappa_1, R,\Lambda,T, \eta,  d)>0$ such that for any $x\in D$ and $t\leq T$,
	$$\Pp_x\left(\tau_D\geq t\right)\leq C_{\ref{lemma:tau}}\left(1\wedge \frac{V(\delta_D(x))}{\sqrt t}\right).$$
\end{lemma}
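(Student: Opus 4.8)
The approach is to compare $\tau_D$ with the exit time $\tau_U$ from a small coordinate box $U=D_Q(r,r)$ anchored at a boundary point $Q$ nearest to $x$, with the radius $r$ calibrated to $t$. The two inputs are \autoref{L:exit}(1), which gives $\E_x[\tau_U]\le C_{\ref{L:exit}.1}V(r)V(\delta_D(x))$, and \autoref{st5}, which gives $\Pp_x(X_{\tau_U}\in D)\le C_{\ref{st5}}\E_x[\tau_U]/\phi(r)$. The elementary observation driving the proof is the inclusion
\[
\{\tau_D\ge t\}\subset\{\tau_U\ge t\}\cup\{X_{\tau_U}\in D\},
\]
valid because $U\subset D$ forces $\tau_U\le\tau_D$, and if $\tau_U<t\le\tau_D$ then $X_{\tau_U}\in D$ since the process has not yet left $D$ at time $\tau_U$.

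I would first dispose of the case $D=\bR^d$ (then $\delta_D(x)=\infty$ and the bound is trivial) and otherwise fix $Q\in\partial D$ with $|x-Q|=\delta_D(x)=:\delta$. Put $r:=\phi^{-1}(t)\wedge\bigl(R/(2\lambda_0)\bigr)$, where $\lambda_0$ is as in \autoref{L:exit}; then $r\le R/(2\lambda_0)<1$, and since $r\le\phi^{-1}(t)$ while $t\le T$, the comparison $\phi(r)\asymp V(r)^2$ from \eqref{e:com} gives $c_1\sqrt t\le V(r)\le c_2\sqrt t$ with $c_1,c_2$ depending on $\phi,R,\Lambda,\lambda_0,T$ (in the range where the cap is active, $t$ and hence $\sqrt t$ is bounded below, so $V(r)$, a constant, remains comparable to $\sqrt t$). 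If $\delta\ge r/(2\sqrt{1+\Lambda^2})$, then by the scaling \eqref{V:sc} of $V$ we get $V(\delta)\ge c_3V(r)\ge c_1c_3\sqrt t$, so $V(\delta)/\sqrt t$ is bounded below and the asserted estimate follows at once from $\Pp_x(\tau_D\ge t)\le1$.

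It remains to treat $\delta< r/(2\sqrt{1+\Lambda^2})$; in particular $\delta<R$, so $x$ lies on the inward normal of $\partial D$ at $Q$ and in the coordinate system $CS_Q$ one has $x=(\widetilde{0},\delta)$ with $\rho_Q(x)=\delta$, whence $x\in D_Q(r,r)$ and $x\in D\cap B\bigl(Q,r/(2\sqrt{1+\Lambda^2})\bigr)$. Thus \autoref{L:exit}(1) and \autoref{st5} both apply with $U=D_Q(r,r)$, and using $\phi(r)\asymp V(r)^2$ together with Markov's inequality,
\[
\Pp_x(\tau_D\ge t)\ \le\ \frac{\E_x[\tau_U]}{t}+\Pp_x\bigl(X_{\tau_U}\in D\bigr)\ \le\ \frac{C_{\ref{L:exit}.1}V(r)V(\delta)}{t}+\frac{c\,V(\delta)}{V(r)}.
\]
Since $r\le\phi^{-1}(t)$ forces $V(r)^2\asymp\phi(r)\le t$, the first term is also $\le c'\,V(\delta)/V(r)$, and with $V(r)\ge c_1\sqrt t$ both terms are $\lesssim V(\delta)/\sqrt t$; combined with $\Pp_x(\tau_D\ge t)\le1$ this yields $\Pp_x(\tau_D\ge t)\le C_{\ref{lemma:tau}}\bigl(1\wedge V(\delta_D(x))/\sqrt t\bigr)$. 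The only point needing real care is that a single radius $r$ must at once satisfy $r\le R/(2\lambda_0)$ (so \autoref{L:exit}(1) applies), $r<1$ (so \autoref{st5} applies), and—after the case split—be comparable to $\sqrt t$ up to $T$-dependent constants; capping $\phi^{-1}(t)$ at $R/(2\lambda_0)$ is precisely what reconciles these, and the rest is routine bookkeeping with \eqref{e:com} and \eqref{V:sc}.
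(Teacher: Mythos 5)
Your proof is correct and follows essentially the same route as the paper's: the paper also splits off the case where $\delta_D(x)$ is comparable to a radius $\asymp\phi^{-1}(t)$ (there chosen as $r_t=V^{-1}(\sqrt{a_Tt})$ with $a_T$ built in so that $2r_t\le R/(2\lambda_0)$), and then bounds $\Pp_x(\tau_D\ge t)\le \Pp_x(\tau_U\ge t)+\Pp_x(X_{\tau_U}\in D)$ for $U=D_Q(2r_t,2r_t)$ via Markov's inequality, \autoref{st5} and \autoref{L:exit}(1), exactly as you do. Your cap $r=\phi^{-1}(t)\wedge(R/(2\lambda_0))$ versus the paper's choice of $a_T$ is only a cosmetic difference, reconciled by $\phi\asymp V^2$.
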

\begin{proof}\hspace{-.05in}.
	Let $a_T:=T^{-1}[V(R/(4\lambda_0))]^2$, where $\lambda_0\ge 1$ is the constant from \autoref{L:exit}.
	By the definition of $(R, \Lambda)$, we may assume that $R<1$ and $\Lambda>2$.
	Also it is enough to  consider the case that $V({2\Lambda}\cdot \delta_D(x))<\sqrt{a_T t}$, so that  $\delta_D(x)<V^{-1}(\sqrt{a_Tt})/{ \sqrt{1+\Lambda^2}}\leq R/(4\lambda_0)<1/4$.
	Now let $r_t:=V^{-1}(\sqrt{a_Tt})$ and  $Q\in\partial D$ such that $\delta_D(x)=|x-Q|$. Applying \autoref{st5} with $U:=D_Q(2r_t,2r_t)$, 
	\begin{align*}
		\Pp_x\left(\tau_D\geq t\right)&=\Pp_x\left(\tau_{U}\geq t,\tau_{D}=\tau_{U}\right)+\Pp_x(\tau_D>\tau_{U}, \tau_D\geq t)\\
		&\leq \Pp_x(\tau_{U}\geq t)+\Pp_x(X_{\tau_{U}}\in D)\leq \frac{\E_x[\tau_U]}{t}+C_{\ref{st5}}\frac{\E_x[\tau_U]}{\phi(2r_t)}.
	\end{align*}
	Note that by \eqref{e:com}  and \eqref{V:sc}, $\phi(2r_t)\asymp [V(2r_t)]^2\asymp a_Tt$. Applying \autoref{L:exit}(1) yields that
	\begin{align*}
		\Pp_x(\tau_D\geq t)\leq  c_1(1+C_{\ref{st5}}a_T^{-1})C_{\ref{L:exit}.1}\frac{V(2r_t)V(\delta_D(x))}{t}\leq c_2\frac{V(\delta_D(x))}{\sqrt t}.
	\end{align*}
\end{proof}
\begin{proposition}
	\label{prop:upper1}
	For any $T>0$, there exists a constant $C_{\ref{prop:upper1}}=C_{\ref{prop:upper1}}(\phi,\kappa_0,\kappa_1, R,\Lambda,T,\eta,  d)>0$  such that for any $x,y\in D$ and $t\le T$,
	$$p_D(t,x,y)\leq C_{\ref{prop:upper1}}\Psi(t,x)p(t,x,y),$$
	where $\Psi(t,x)$ is given in \eqref{e:dax}.
\end{proposition}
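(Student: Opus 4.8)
The plan is to reduce first to the case where $x$ is close to $\partial D$, and then to treat separately the near-diagonal regime $|x-y|\le K\phi^{-1}(t)$ and the off-diagonal regime $|x-y|>K\phi^{-1}(t)$, where $K=K(\phi,d)$ is a large constant fixed later. Since $p_D(t,x,y)\le p(t,x,y)$ always and $\Psi(t,x)\asymp 1$ whenever $\delta_D(x)\ge\eps_0\phi^{-1}(t)$, we may assume $\delta_D(x)<\eps_0\phi^{-1}(t)$ for a small constant $\eps_0$ (to be chosen depending on $\phi,\Lambda,\lambda_0,R,T$); then $\Psi(t,x)\asymp\sqrt{\phi(\delta_D(x))/t}\asymp V(\delta_D(x))/\sqrt t$ by \eqref{e:com}. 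In the near-diagonal regime I would use the semigroup property and $p_D\le p$ to get
\[
p_D(t,x,y)=\E_x[p_D(t/2,X_{t/2},y);\,t/2<\tau_D]\le\Pp_x(\tau_D>t/2)\,\sup_{z\in\bR^d}p(t/2,z,y),
\]
and since $\sup_z p(s,z,y)\le c[\phi^{-1}(s)]^{-d}$ by \eqref{eq:hke}, combining \autoref{lemma:tau}, $({\bf WS})$, and the fact that $p(t,x,y)\asymp[\phi^{-1}(t)]^{-d}$ when $|x-y|\le K\phi^{-1}(t)$ gives the claim here.

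The off-diagonal regime is the technical core, where \autoref{lemma:pU}, \autoref{L:exit}, \autoref{st5} and \eqref{eq:hke} are combined. A preliminary observation (from $({\bf WS})$ applied to the coordinate maximizing $|x^i-y^i|$) is that for any prescribed $M$ one has $p(t,x,y)<(MC_1)^{-1}[\phi^{-1}(t)]^{-d}$ once $K$ is large. Let $Q\in\partial D$ realize $\delta_D(x)=|x-Q|$, put $\rho:=\eps_1\phi^{-1}(t)$ (with $\eps_1$ small and $\eps_0\ll\eps_1$, so that $x\in D_Q(\rho,\rho)\cap B(Q,\rho/(2\sqrt{1+\Lambda^2}))$ and $\rho\le R/(2\lambda_0)$), and apply \autoref{lemma:pU} with $U=D$, $U_1:=D_Q(\rho,\rho)$, $U_3:=D\cap\{z:\,p(t,z,y)>Mp(t,x,y)\}$ and $U_2:=D\setminus(U_1\cup U_3)$: for $M$ large one checks $x\notin U_3$, $y\in U_3$, and $\text{dist}(U_1,U_3)>0$ (the last by joint continuity of $p$ and compactness of $\overline{U_1}$, together with the estimate below). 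The estimate in question, proved coordinate-by-coordinate from \eqref{eq:hke} and $({\bf WS})$, is: $p(s,z,y)\le c(\phi,d,A)\,p(t,x,y)$ whenever $t/4\le s\le t$ and $|z-x|\le A\phi^{-1}(t)$ — for coordinates with $|x^i-y^i|$ large this uses $|z^i-y^i|\asymp|x^i-y^i|$, and for the remaining ones the factor $1\wedge\tfrac{t\phi^{-1}(t)}{|x^i-y^i|\phi(|x^i-y^i|)}$ is already bounded below; in particular $p(\cdot,z,y)\asymp p(t,x,y)$ on $U_1$ and on a neighborhood of $\overline{U_1}$.

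With this decomposition, the second term of \eqref{eq:pU} is bounded using $\E_x[\tau_{U_1}]\le C_{\ref{L:exit}.1}V(\rho)V(\delta_D(x))\asymp\sqrt t\,V(\delta_D(x))$ (by \autoref{L:exit}(1)) and $\sup_{z\in U_1}p_D(t/2,z,y)\le\sup_{z\in U_1}p(t/2,z,y)\lesssim p(t,x,y)$, giving $\lesssim\tfrac1t\cdot\sqrt t\,V(\delta_D(x))\cdot p(t,x,y)\asymp\Psi(t,x)p(t,x,y)$; the first term is bounded using $\Pp_x(X_{\tau_{U_1}}\in U_2)\le\Pp_x(X_{\tau_{U_1}}\in D)\le C_{\ref{st5}}\E_x[\tau_{U_1}]/\phi(\rho)\lesssim V(\delta_D(x))/\sqrt t\asymp\Psi(t,x)$ (by \autoref{st5}) together with $\sup_{t/2<s<t,\,z\in U_2}p_D(s,z,y)\le\sup_{z\notin U_3}p(s,z,y)\le M'\,p(t,x,y)$ (immediate from the definition of $U_3$ and $({\bf WS})$), again $\lesssim\Psi(t,x)p(t,x,y)$. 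The main obstacle is the jump term of \eqref{eq:pU}: one has $\int_0^{t/2}\!\int_{U_1}p_{U_1}(s,x,u)\,du\,ds\le\E_x[\tau_{U_1}]\lesssim\sqrt t\,V(\delta_D(x))$, $\int_{\{h:\,u+e_i h\in U_3\}}p_D(t-s,u+e_i h,y)\,dh\le\int_\bR p(t-s,u+e_i h,y)\,dh\lesssim[\phi^{-1}(t)]^{-(d-1)}\prod_{j\ne i}(1\wedge\tfrac{t\phi^{-1}(t)}{|x^j-y^j|\phi(|x^j-y^j|)})$ (one-dimensional integration of \eqref{eq:hke} together with the coordinate estimate for $j\ne i$), and $J(u,u+e_i h)\le\kappa_0\nu^1(|h|)$; the delicate point is to extract from ``$u\in U_1$, $u+e_i h\in U_3$'' that $|h|\gtrsim|x^i-y^i|$ when $|x^i-y^i|\gtrsim\phi^{-1}(t)$ — so that $\nu^1(|h|)\lesssim\frac1{|x^i-y^i|\phi(|x^i-y^i|)}\asymp\frac1{t\phi^{-1}(t)}(1\wedge\tfrac{t\phi^{-1}(t)}{|x^i-y^i|\phi(|x^i-y^i|)})$ — while coordinates with $|x^i-y^i|\lesssim\phi^{-1}(t)$ contribute nothing, since such a point $v=u+e_i h$ cannot satisfy $p(t,v,y)>Mp(t,x,y)$; both facts are read off from ``$p(t,v,y)>Mp(t,x,y)$'' via the coordinate estimate and $({\bf WS})$, with $M$ chosen large enough. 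Summing over $i$ then bounds the jump term by $\lesssim\frac{V(\delta_D(x))}{\sqrt t}[\phi^{-1}(t)]^{-d}\prod_i(1\wedge\tfrac{t\phi^{-1}(t)}{|x^i-y^i|\phi(|x^i-y^i|)})\asymp\Psi(t,x)p(t,x,y)$, and adding the three contributions completes the proof. It is precisely the anisotropy — a single jump altering one coordinate only — that prevents choosing $U_3$ as a Euclidean ball (as in the isotropic argument of \cite[Section 5]{GKK20}) and forces the level-set choice together with the coordinatewise scaling estimate, which is the step requiring the most care.
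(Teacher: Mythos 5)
Your argument is correct and follows the same skeleton as the paper's proof: the same reduction to $\delta_D(x)\lesssim\phi^{-1}(t)$, the same near-diagonal case handled via $\Pp_x(\tau_D>t/2)$ and \autoref{lemma:tau}, and the same use of \autoref{lemma:pU} with $U_1=D_Q(r_t,r_t)$, with the three terms estimated exactly as the paper does via \autoref{L:exit}(1), \autoref{st5} and \eqref{eq:hke}. The one genuine deviation is your choice of $U_3$: the paper takes the explicit coordinate slab $U_3:=\{z\in D:\ |z^i-x^i|>|x^i-y^i|/2\ \text{for some }i\in E\}$, where $E$ is the set of coordinates with $|x^i-y^i|\ge 4r_t$, whereas you take the level set $U_3:=\{z\in D:\ p(t,z,y)>Mp(t,x,y)\}$ and then recover the two key facts (coordinates with $|x^i-y^i|\lesssim\phi^{-1}(t)$ contribute nothing to the jump term; for the large coordinates any jump landing in $U_3$ has $|h|\gtrsim|x^i-y^i|$) from the level-set condition via the coordinatewise form of \eqref{eq:hke} and $({\bf WS})$. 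That verification is sound, since a single jump changes only one coordinate and the remaining factors of $p(t,\cdot,y)$ are comparable on $U_1$ and its translates, so both choices work; the paper's slab makes these two facts immediate from the geometry (its \eqref{eq:u31} and the displayed bound on $|h|$), at the cost of carrying the index set $E$ explicitly, while your level set transfers that work into the comparison argument with the constant $M$ and the ordering of the constants $M$, $K$, $\eps_0\ll\eps_1$. One small correction: the anisotropy does not ``force'' the level-set choice — the paper's coordinate-slab $U_3$ is precisely the anisotropic replacement for the Euclidean ball of \cite[Section 5]{GKK20}, and is arguably the more elementary of the two.
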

\begin{proof}\hspace{-.05in}.
	For any fixed $T>0$, let $t\in(0,T]$. We choose and fix a constant $a<a_T :=T^{-1}[V(R/(4\lambda_0))]^2$ such that $r_t:=V^{-1}(\sqrt{at})<1$ for all $t\in[0,T]$. It suffices to show the result for $\delta_D(x)\leq r_t/2{ \sqrt{1+\Lambda^2}}$.
	We deal with two cases separately.\\
	\textit{Case 1.} For all $i\in\{1,\dots,d\}$, let $|x^i-y^i|<4 r_t$. By the semigroup property, \eqref{eq:hke} and \autoref{lemma:tau}, we have
	\begin{align*}
		p_D(t,x,y)&\leq  \sup_{z\in D}p_D(t/2,z,y)\int_Dp_D(t/2,x,z)dz\\
		&\leq c_1[\phi^{-1}(t/2)]^{-d}\Pp_x(\tau_D>t/2)\leq c_2[\phi^{-1}(t/2)]^{-d}\left(1\wedge \frac{V(\delta_D(x))}{\sqrt t}\right).
	\end{align*}
	Since $|x^i-y^i|<4 r_t\asymp \phi^{-1}(t)$, according to \eqref{eq:hke} and  \eqref{e:com}, we have our desired result.\\
	\textit{Case 2.} There exists $i\in\{1,\dots,d\}$ such that $|x^i-y^i|\geq 4 r_t$. Consider the set of indices of the coordinate for $x, y$ that
	$$ E:=\{i\in\{1,\dots,d\}:|x^i-y^i|\geq 4r_t\}.$$
	Let $Q\in\partial D$ such that $\delta_D(x)=|x-Q|$. Define $U_1:=D_Q(r_t,r_t)$ and $U_3:=\{z\in D:\text{ there exists }i\in E \text{ such that }|z^i-x^i|>|x^i-y^i|/2\}$. Then $\text{dist}(U_1, U_3)>0$. Let $U_2:=D\setminus (U_1\cup U_3).$ 
	To obtain our assertion we will estimate the right-hand side of \eqref{eq:pU}.  Note that  $x\in U_1$, $y\in U_3$ and  if $i\notin E$, then
	$$|x^i-y^i|<4V^{-1}(\sqrt{at})\asymp\phi^{-1}(t).$$ For any $z\in U_2$,  if $i\in E$,
	$$|z^i-y^i|\geq |x^i-y^i|-|z^i-x^i|\geq|x^i-y^i|/2.$$
	Hence, by \eqref{eq:hke}, \autoref{st5} and \autoref{L:exit}, we have
	\begin{align}\label{eq:u1}
		&	\Pp_x(X_{\tau_{U_1}}\in U_2)\sup_{t/2<s<t,z\in U_2}p_U(s,z,y)\nn\\
		\leq &\	\Pp_x(X_{\tau_{U_1}}\in D)\sup_{t/2<s<t,z\in U_2}p(s,z,y)\nn\\
		\leq &\ C_{\ref{st5}}\frac{\E_x[\tau_{U_1}]}{\phi(r_t)}\sup_{t/2<s<t,z\in U_2}\left\{c_3[\phi^{-1}(s)]^{-d}\prod_{i\in E}\left(1\wedge\frac{s\phi^{-1}(s)}{|z^i-y^i|\phi(|z^i-y^i|)}\right)\prod_{i\notin E}1\right\}\nn\\
		\leq&\ c_4\frac{V(\delta_D(x))}{\sqrt t}[\phi^{-1}(t)]^{-d}\prod_{i=1}^d\left(1\wedge\frac{t\phi^{-1}(t)}{|x^i-y^i|\phi(|x^i-y^i|)}\right).
	\end{align}
	For the second term of \eqref{eq:pU}, let $z\in U_1$. Then $|z^i-x^i|<2r_t$, and  for  $i\in E$,
	$$|z^i-y^i|\geq|x^i-y^i|-|z^i-x^i|\geq |x^i-y^i|-2r_t\geq |x^i-y^i|/2.$$
	With \autoref{L:exit}(1) and \eqref{eq:hke}, we arrive at
	\begin{align}\label{eq:u2}
		&\frac{2}{t}\ \E_x[\tau_{U_1}]\sup_{z\in U_1}p_U(t/2,z,y)\nn\\
		\leq&\ C_{\ref{L:exit}.1}\frac{2}{t}V(r_t)V(\delta_D(x))\cdot C_1[\phi^{-1}(t/2)]^{-d}\prod_{i\in E}\left(1\wedge \frac{t\phi^{-1}(t/2)}{|x^i-y^i|\phi(|x^i-y^i|/2)}\right)\prod_{i\notin E}1\nn\\
		\leq&\ c_5\frac{V(\delta_D(x))}{\sqrt t}[\phi^{-1}(t)]^{-d}\prod_{i=1}^d\left(1\wedge\frac{t\phi^{-1}(t)}{|x^i-y^i|\phi(|x^i-y^i|)}\right)
	\end{align}
	where the last inequality follows from $|x^i-y^i|<4V^{-1}(\sqrt{at})\asymp\phi^{-1}(t)$ for $i\notin E$.
	For the last term in \eqref{eq:pU}, we first note that for $u\in U_1$,
	$u+e_ih$ cannot be in $U_3$ 	if $i\notin E$. So 
	\begin{align}\label{eq:u31}
		\sup\limits_{u\in U_1,u+e_ih\in U_3,  i\notin E}J(u,u+e_ih)=0.
	\end{align} 
	If $u\in U_1$ and $u+e_i h\in U_3$ where $i\in E$,
	$$|h|\geq|u^i+h-x^i|-|x^i-Q^i|-|u^i-Q^i|\ge|x^i-y^i|/2-3r_t/2\ge|x^i-y^i|/8.$$ 
	Therefore,  by \eqref{a:kappa}, \eqref{a:J} and ({\bf WS}),
	$$\sup_{u\in U_1,u+e_ih\in U_3, i\in E}J(u,u+e_ih)\leq \frac{\uC8^{1+\ua}\kappa_0}{|x^i-y^i|\phi(|x^i-y^i|)}\leq c_6\frac{1}{t\phi^{-1}(t)}\left(1\wedge\frac{t\phi^{-1}(t)}{|x^i-y^i|\phi(|x^i-y^i|)}\right).$$
	Note that for  $u\in U_1$ with $k\in E\setminus\{i\}$, $|u^k-y^k|\geq|x^k-y^k|-|u^k-x^k|\geq|x^k-y^k|-2r_t\geq|x^k-y^k|/2$. 
	Then by \eqref{eq:hke}, for $u\in U_1$ and $s\in (t/2, t)$,
	\begin{align}\label{eq:u32}
		&\sum_{i\in E}\Bigg(\sup_{\substack{u\in U_1\\ u+e_ih\in U_3}}\hspace{-.1in}J(u,u+e_ih)
		\cdot\int_{\bR}p(s,u+e_ih,y)dh\Bigg)\nn\\
		\le &  \sum_{i\in E}
		\Bigg(\frac{c_6}{t\phi^{-1}(t)}\Big(1\wedge\frac{t\phi^{-1}(t)}{|x^i-y^i|\phi(|x^i-y^i|)}\Big)
		\nn\\
		&\cdot \ C_1[\phi^{-1}(s)]^{-d+1}\prod_{ k\notin E}1
		\prod_{k\in E\setminus\{i\}}\Big(1\wedge\frac{2^{1+\ua}s\phi^{-1}(s)}{|x^k-y^k|\phi(|x^k-y^k|)}\Big)\int_\bR\Big([\phi^{-1}(s)]^{-1}\wedge\frac{s}{|h|\phi(|h|)}\Big)dh\Bigg)\nn  \\
		\leq&\ c_7t^{-1}[\phi^{-1}(t)]^{-{d}}\prod_{k=1}^d\left(1\wedge\frac{t\phi^{-1}(t)}{|x^k-y^k|\phi(|x^k-y^k|)}\right).	
	\end{align}
	For the last inequality, we used the fact that $\int_\bR\Big([\phi^{-1}(s)]^{-1}\wedge\frac{s}{|h|\phi(|h|)}\Big)dh<\infty$ and  $|x^k-y^k|<4V^{-1}(\sqrt{at})$ for $k\notin E$.
	By \eqref{eq:hke} and \autoref{L:exit} with \eqref{eq:u31} and \eqref{eq:u32}, we obtain
	\begin{align}\label{eq:u3}
		&\int_{0}^{t/2}\int_{U_1}p_{U_1}(s,x,u)\Bigg(\sum_{i=1}^d\int\limits_{\{h\in \bR: u+e_ih\in U_3\} }\hspace{-.2in}p_U(t-s,u+e_ih,y)dh\cdot\sup_{\substack{u\in U_1\\ u+e_ih\in U_3}}J(u,u+e_ih)\Bigg)duds\nn\\
		&\leq \int_{t/2}^t\int_{U_1}p_{U_1}(t-s,x,u)
		\left(\sum_{i\in E}\sup_{\substack{u\in U_1\\ u+e_ih\in U_3}}J(u,u+e_ih)
		\cdot\int_{\bR}p(s,u+e_ih,y)dh\right)duds\nn\\
		&\leq  c_7t^{-1}[\phi^{-1}(t)]^{-{d}}\prod_{k=1}^d\left(1\wedge\frac{t\phi^{-1}(t)}{|x^k-y^k|\phi(|x^k-y^k|)}\right)\int_{t/2}^t\int_{U_1}p_{U_1}(t-s,x,u)duds\nn\\
		&\leq c_8p(t,x,y)t^{-1}\int_0^{t/2}\Pp_x(\tau_{U_1}>s)ds\leq c_8p(t,x,y)t^{-1}\E_x[\tau_{U_1}]\nn\\
		&\leq c_9C_{\ref{L:exit}.1}t^{-1}V(r_t)V(\delta_D(x))p(t,x,y)\leq c_8\frac{V(\delta_D(x))}{\sqrt{t}}p(t ,x,y).
	\end{align}
	Applying \eqref{eq:u1}, \eqref{eq:u2}, \eqref{eq:u3} and \eqref{eq:hke} to \autoref{lemma:pU}, it  gives the result.
\end{proof}

\medskip
We are now ready to prove \autoref{t:nmain}.
\medskip

{\it Proof of \autoref{t:nmain}.}
By \autoref{prop:upper1}, the semigroup and symmetry property of Dirichlet heat kernel $p_D(t,x,y)$ yield that
\begin{align*}
	p_D(t,x,y)&=\int_{\Rd}p_D(t/2,x,z)p_D(t/2,z,y)dz\nn\\
	&\leq C_{\ref{prop:upper1}}^2\Psi(t/2,x)\Psi(t/2,y)\int_{\Rd}p(t/2,x,z)p(t/2,z,y)dz\nn\\
	&=C_2\Psi(t/2,x)\Psi(t/2,y)p(t,x,y).
\end{align*}
\hfill 
\rule{0.5em}{0.5em}

\section{Dirichlet lower bound estimate}\label{sec:lower}
{In this section, we first obtain an interior near-diagonal lower bound of $p_U(t,x,y)$ on an open set $U$, see \autoref{prp:lowersec2}. Then  under the condition $(\mathbf D_\gamma)$ for $\gamma\in(0,1]$, we discuss a preliminary lower bound in \autoref{prp:lower1}. The boundary decay result in \autoref{L:intpD} plays a vital role in proving the sharp lower bound estimate for the Dirichlet heat kernel for small time in \autoref{t:nmain2}(1).}
\subsection{The preliminary lower bound estimate} 
{For any fixed $r>0,x_0\in\Rd$, let $B:=B(x_0,r)$. In the following, we discuss the  lower bound for $p_B(t,x,y)$.}
\begin{proposition}\label{prp:pBlower}
	For any $\zeta_1>0$, there exist $\zeta_2:=\zeta_2(\phi, \zeta_1, d), \ \zeta_3:=\zeta_3(\phi, \zeta_1, d)>0$ such that for any $t\leq \phi(\zeta_1r)$ and for  any $x,y\in B(x_0,\zeta_2\phi^{-1}(t))\subset B$, 
	$$p_B(t,x,y)\geq \zeta_3[\phi^{-1}(t)]^{-d}.$$
\end{proposition}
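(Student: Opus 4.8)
The plan is to start from the off–diagonal estimate \eqref{eq:hke} for the free kernel, use the decomposition
$$p_B(t,x,y)=p(t,x,y)-\E_x\big[p(t-\tau_B,X_{\tau_B},y);\,\tau_B<t\big]$$
to reduce everything to controlling the killed term, and then bootstrap from small $\zeta_1$ to arbitrary $\zeta_1$ by the Chapman--Kolmogorov identity. First, the lower bound in \eqref{eq:hke} together with $({\bf WS})$ yields constants $c_0,c_1\in(0,1)$ with $p(t,x,y)\ge c_1[\phi^{-1}(t)]^{-d}$ whenever $|x^i-y^i|\le c_0\phi^{-1}(t)$ for all $i$ (each factor $1\wedge\frac{t\phi^{-1}(t)}{|x^i-y^i|\phi(|x^i-y^i|)}$ is then $1$); so, taking $\zeta_2\le c_0/2$, the hypothesis $|x-y|\le 2\zeta_2\phi^{-1}(t)$ already gives $p(t,x,y)\ge c_1[\phi^{-1}(t)]^{-d}$, and only the killed term remains.

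\emph{Base case ($\zeta_1$ small).} Fix a small absolute constant $\zeta_1^*$ and suppose $t\le\phi(\zeta_1^* r)$ and $x,y\in B(x_0,\zeta_2^*\phi^{-1}(t))$ with $\zeta_2^*$ small. Then $\phi^{-1}(t)\le\zeta_1^* r\ll r$, so $\delta_B(x),\delta_B(y)\ge r/2$, and since $X_{\tau_B}\notin B$ we have $|X_{\tau_B}-y|\ge r/2$; hence $p(\,\cdot\,,X_{\tau_B},y)$ is controlled by the off–diagonal part of \eqref{eq:hke} in at least one coordinate. I would estimate $\E_x[p(t-\tau_B,X_{\tau_B},y);\tau_B<t]$ by splitting at $\tau_B=t/2$. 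On $\{\tau_B\le t/2\}$ one has $t-\tau_B\asymp t$, so the integrand is $\lesssim[\phi^{-1}(t)]^{-d}\,\frac{t\phi^{-1}(t)}{r\phi(r)}$, and since $\frac{t}{\phi(r)}\le\lC^{-1}(\zeta_1^*)^{\la}$ by $({\bf WS})$ this part is $\le\tfrac14 c_1[\phi^{-1}(t)]^{-d}$ once $\zeta_1^*$ is small. On $\{t/2<\tau_B<t\}$ the pointwise bound on $p(t-\tau_B,\cdot,\cdot)$ is singular, and here I would pass through the L\'evy system \eqref{eq:LSd}, rewriting the contribution as $\E_x\big[\int_0^{\tau_B\wedge t}\sum_i\int_{\{X_s+e_ih\notin B\}}p(t-s,X_s+e_ih,y)J(X_s,X_s+e_ih)\,dh\,ds\big]$, inserting the auxiliary exit time $\tau_{B(x,r/4)}$ (a jump leaving $B$ from $\{X_s\in B(x,r/4)\}$ has $|h|\ge r/4$, hence is governed by $\int_{\{|h|\ge r/4\}}\nu^1(|h|)\,dh\lesssim\phi(r)^{-1}$, see \eqref{e:large}), and using $\Pp_x(\tau_{B(x,r/4)}\le t)\lesssim t/\phi(r)\lesssim(\zeta_1^*)^{\la}$ (cf. \autoref{st5}) together with \autoref{prop:Exptau} and the Markov property to absorb this remainder into $\tfrac14 c_1[\phi^{-1}(t)]^{-d}$ as well. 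This gives $p_B(t,x,y)\ge\tfrac12 c_1[\phi^{-1}(t)]^{-d}$ in the base case.

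\emph{Arbitrary $\zeta_1$.} Given $\zeta_1$, pick $n=n(\zeta_1,\phi)\in\bN$ with $n\ge\uC(\zeta_1/\zeta_1^*)^{\ua}$, so that $t/n\le\phi(\zeta_1^* r)$ by $({\bf WS})$ whenever $t\le\phi(\zeta_1 r)$, and hence $\phi^{-1}(t/n)\le\zeta_1^* r<r$; then choose $\zeta_2=\zeta_2(\zeta_1)$ small enough that $\zeta_2\phi^{-1}(t)\le\zeta_2^*\phi^{-1}(t/n)$, which is possible since $\phi^{-1}(t/n)\ge\lC^{1/\la}n^{-1/\la}\phi^{-1}(t)$, so that $x,y\in B(x_0,\zeta_2^*\phi^{-1}(t/n))\subset B$. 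By \autoref{CZlemma} (and the continuity of $p_B$),
$$p_B(t,x,y)=\int_{B^{\,n-1}}p_B(t/n,x,z_1)p_B(t/n,z_1,z_2)\cdots p_B(t/n,z_{n-1},y)\,dz_1\cdots dz_{n-1}\ \ge\ \int_{\Delta}(\cdots)\,dz,$$
where $\Delta=\big(B(x_0,\zeta_2^*\phi^{-1}(t/n))\big)^{\,n-1}$; applying the base case to each of the $n$ factors and using $|B(x_0,\zeta_2^*\phi^{-1}(t/n))|\asymp[\phi^{-1}(t/n)]^d$ and $\phi^{-1}(t/n)\le\phi^{-1}(t)$ gives $p_B(t,x,y)\ge c(n)[\phi^{-1}(t)]^{-d}$. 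Setting $\zeta_3:=c(n)$ (and $n=1$ when $\zeta_1\le\zeta_1^*$) completes the proof.

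The hard part is the base-case estimate of the killed term, in particular the contribution of exit times $\tau_B$ close to $t$, for which the pointwise heat-kernel bound is of no use; this is exactly where the L\'evy-system reformulation \eqref{eq:LSd} and the exit estimates of \autoref{sec:analandexit} must carry the argument.
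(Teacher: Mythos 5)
Your overall architecture (near-diagonal lower bound for the free kernel minus the killed term, then Chapman--Kolmogorov chaining to pass from a small $\zeta_1^*$ to arbitrary $\zeta_1$) matches the paper in its second half: the chaining step is essentially identical to the paper's Case 2. The base case, however, is where your route diverges from the paper's, and it has a genuine gap. You correctly flag that on $\{t/2<\tau_B<t\}$ the pointwise bound on $p(t-\tau_B,X_{\tau_B},y)$ is useless, but the proposed repair does not close the hole. After the L\'evy-system rewrite \eqref{eq:LSd}, the integrand for $s$ near $t$ is $\int_{\{w+e_ih\notin B\}}p(t-s,w+e_ih,y)J(w,w+e_ih)\,dh$ with $w=X_s$; the exterior point $w+e_ih$ is only guaranteed to be far from $y$ in \emph{one} coordinate, so the remaining $d-1$ factors of the product bound \eqref{eq:hke} contribute $[\phi^{-1}(t-s)]^{-(d-1)}$, and $\int_0^\cdot[\phi^{-1}(u)]^{-(d-1)}du=\infty$ whenever $\la\le d-1$ (so for every $d\ge3$, and for $d=2$ unless $\la>1$). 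Likewise, bounding the contribution by $\Pp_x(\tau_{B(x,r/4)}\le t)\lesssim t/\phi(r)$ times a supremum of the kernel fails, because that supremum over small residual times is unbounded. In short, the ingredients you list (\autoref{st5}, \autoref{prop:Exptau}, the jump-rate bound from \eqref{e:large}, the Markov property) control the \emph{probability} of exiting, but not the size of $p(t-\tau_B,X_{\tau_B},y)$ when $t-\tau_B$ is tiny and $X_{\tau_B}$ is close to $y$ in $d-1$ coordinates; making the L\'evy-system route rigorous would require integrating those near-diagonal factors against the law of $X_s$ in the transverse coordinates and separately handling exits from points near $\partial B$ where the jump size is not comparable to $r$ --- essentially a lower-bound analogue of \autoref{lemma:pU}, which you have not supplied.

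The paper sidesteps this difficulty entirely with a different device: a symmetrization/duality argument with the strong Markov property applied to $P_tf$ and $P^B_{t/2}g$, which yields the inequality \eqref{eq:ppB}, $p(t,x,y)\le p_B(t,x,y)+2\sup_{s\in(t/2,t)}\sup_{w\in E,\,z\in B^c}p(s,w,z)$ for $x,y\in E\subset B$. The crucial feature is that the error term only involves times $s\in(t/2,t)$, so $[\phi^{-1}(s)]^{-d}\asymp[\phi^{-1}(t)]^{-d}$ and a single separated coordinate of $(w,z)\in E\times B^c$ already makes it an arbitrarily small multiple of $[\phi^{-1}(t)]^{-d}$ once $\zeta$ is small; no L\'evy system or exit-time estimates are needed. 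If you want to keep your decomposition, you should replace the absorption sketch by this symmetrized inequality (or supply the full transverse integration argument); as written, the base case is not proved.
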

\begin{proof}\hspace{-.05in}.
	For any $t>0$ and $0\leq h\in L^1(\Rd)\cap L^\infty(\Rd)$, by strong Markov property, we have that
	\begin{align}\label{eq:PtSMP}
		P_th(x)=&P_t^Bh(x)+\E_{x}[P_{t-\tau_B}h(X_{\tau_{B}});t>\tau_B]		\leq P_t^Bh(x)+\sup_{s\in(0,t)}\sup_{z\in B^c}P_sh(z).
	\end{align}
	Let  $f,g\in L^1(\Rd)\cap L^\infty(\Rd)$ be nonnegative functions.
	Replacing $t$ by $t/2$, and $h$ by $P_{t/2}f, P_{t/2}^Bg$ in \eqref{eq:PtSMP}, respectively yield that
	\begin{align*}
		P_tf(x)=P_{t/2}P_{t/2}f(x)&\leq P_{t/2}^BP_{t/2}f(x)+\sup_{s\in(0,t/2)}\sup_{z\in B^c}P_sP_{t/2}f(z);\\
		P_{t/2}P^B_{t/2}g(x)&\leq P_{t}^Bg(x)+\sup_{s\in(0,t/2)}\sup_{z\in B^c}P_sP^B_{t/2}g(z).
	\end{align*}
	Hence, we obtain that
	\begin{align*}
		\langle P_tf,g\rangle\leq&\langle P_{t/2}^BP_{t/2}f,g\rangle+\sup_{s\in(0, t/2)}\sup_{z\in B^c}P_sP_{t/2}f(z)\|g\|_{L^1}\\
		=&\langle f,P_{t/2}P_{t/2}^Bg\rangle+\sup_{s\in(t/2,t)}\sup_{z\in B^c}P_sf(z)\|g\|_{L^1}\\
		\leq &\langle f,P_{t}^Bg\rangle+\sup_{s\in(0,t/2)}\sup_{z\in B^c}P_sP^B_{t/2}g(z)\|f\|_{L^1}+\sup_{s\in(t/2,t)}\sup_{z\in B^c}P_sf(z)\|g\|_{L^1}\\
		\leq&\langle P_{t}^Bf,g\rangle+\sup_{s\in(t/2,t)}\sup_{z\in B^c}P_{ s}g(z)\|f\|_{L^1}+\sup_{s\in(t/2,t)}\sup_{z\in B^c}P_sf(z)\|g\|_{L^1}.
	\end{align*}
	For any Borel set $E\subset B$, the above inequality holds for all {nonnegative functions $ f,g\in L^1(\Rd)\cap L^\infty(\Rd)$} with $\text{Supp}(f), \text{Supp}(g)\subset E$.
	So for any $t>0$ and $x,y\in E$,
	\begin{equation}\label{eq:ppB}
		p(t,x,y)\leq p_B(t,x,y)+2\sup_{s\in(t/2,t)}\sup_{w\in E, z\in B^c}p(s,w,z).
	\end{equation}
	Let $E=B(x_0,(r\wedge\phi^{-1}(t))/2)$. For any $t\leq \phi(\zeta r)$, $w\in E$ and $z\in B^c$, 
	$|w-z|\geq \frac{r}{2}\geq\frac{\phi^{-1}(t)}{2\zeta},$
	and there exists $1\leq k\leq d$ such that 
	$$|w^k-z^k|\geq\frac{\phi^{-1}(t)}{2\sqrt{d}\zeta}.$$
	Now applying \eqref{eq:ppB} with \eqref{eq:hke} and ({\bf WS}), for any $x,y\in B(x_0,(r\wedge\phi^{-1}(t))/2)$
	\begin{align*}
		p_B(t,x,y)\geq &c_1[\phi^{-1}(t)]^{-d}-2\sup_{s\in(t/2,t)}\sup_{w\in E, z\in B^c}p(s,w,z)\\
		\geq&c_1[\phi^{-1}(t)]^{-d}-c_2[\phi^{-1}(t/2)]^{-d}\frac{t\phi^{-1}(t)}{|w^k-z^k|\phi(|w^k-z^k|)}\\
		\geq&c_1[\phi^{-1}(t)]^{-d}-c_2\uC^{-d}2^{-\ua d}[\phi^{-1}(t)]^{-d}\frac{1}{(2\sqrt d\zeta)^{-(\la+1)}\wedge (2\sqrt d\zeta)^{-(\ua+1)}}\\
		\geq&\frac{c_1}{2}[\phi^{-1}(t)]^{-d}.
	\end{align*}
	The last inequality holds by selecting the value of $\zeta$  such that
	$$(2\sqrt d\zeta)^{-(\la+1)}\wedge (2\sqrt d\zeta)^{-(\ua+1)}=c_1^{-1}c_2\uC^{-d}2^{-\ua d+1}.$$
	According to $\zeta$ and $\zeta_1$, we can observe the two cases, $\zeta_1\le \zeta$ and $\zeta_1>\zeta$.\\
	\textit{Case 1.} If $\zeta_1\leq \zeta$, take $\zeta_2:=\frac{1}{2}(1\wedge \zeta_1^{-1})$. Then for any $t\leq \phi(\zeta_1r)\leq \phi(\zeta r)$,
	$$\zeta_2\phi^{-1}(t)=\frac{1}{2}\left(\phi^{-1}(t)\wedge \zeta_1^{-1}\phi^{-1}(t)\right)\leq \frac{1}{2}(\phi^{-1}(t)\wedge r).$$
	Thus, $B(x_0,\zeta_2\phi^{-1}(t))\subset B(x_0,(r\wedge\phi^{-1}(t))/2)$, the result gets proved by letting $\zeta_3=\frac{c_1}{2}$.\\
	\textit{Case 2.} If $\zeta_1>\zeta$, we then take 
	$$\zeta_2:=\frac{\zeta}{2\zeta_1}(1\wedge \zeta_1^{-1}).$$
	For any $t\leq \phi(\zeta_1r)$, we first have $B(x_0,\zeta_2\phi^{-1}(t))\subset B(x_0,(r\wedge\phi^{-1}(t))/2)$ since {$\zeta_2\leq\frac{1}{2}$ and
		$$\zeta_2\phi^{-1}(t)\leq \frac{\zeta}{2\zeta_1}(1\wedge \zeta_1^{-1})\zeta_1r\leq\frac{1}{2}  \zeta\zeta_1^{-1}r\leq \frac{1}{2}r.$$}
	Now we define $$n=n(r):=\left[\frac{\phi(\zeta_1r)}{\phi(\zeta r)}\right]+1,$$
	where $[x]$ represents the largest integer that is less than or equal to $x$. Note that by ({\bf WS}), we have two-sided bounds for $n$ that is independent of $r$, 
	\begin{equation}
		\label{eq:n}
		\lC\left(\frac{\zeta_1}{\zeta}\right)^{\la}\vee 1\leq n\leq\uC\left(\frac{\zeta_1}{\zeta}\right)^\ua+1.
	\end{equation}The semigroup property implies that for any $t\leq \phi(\zeta_1r)$ and $x,y\in B(x_0,\zeta_2\phi^{-1}(t))=:B_0$,
	\begin{align*}
		p_B(t,x,y)=&\int_B\cdots\int_Bp_B(\tfrac{t}{n},x,z_1)p_B(\tfrac{t}{n},z_1,z_2)\cdots p_B(\tfrac{t}{n},z_{n-1},y)dz_1dz_2\cdots dz_{n-1}\\
		\geq&\int_{B_0}\cdots\int_{B_0}p_B(\tfrac{t}{n},x,z_1)p_B(\tfrac{t}{n},z_1,z_2)\cdots p_B(\tfrac{t}{n},z_{n-1},y)dz_1dz_2\cdots dz_{n-1}\\
		\geq &\left(\frac{c_1}{2}[\phi^{-1}(t)]^{-d}\right)^n|B(x_0,\zeta_2\phi^{-1}(t))|^{n-1}\\
		=&\left(\frac{c_1}{2}\right)^{n}[\phi^{-1}(t)]^{-dn}\left(\frac{\pi^{d/2}}{\Gamma(1+\frac{d}{2})}\right)^{n-1}\zeta_2^{d(n-1)}[\phi^{-1}(t)]^{d(n-1)}\\
		\geq& \left(\frac{c_1}{2}\right)^{n}\left(\frac{\pi^{d/2}}{\Gamma(1+\frac{d}{2})}\right)^{n-1}\zeta_2^{d(n-1)}[\phi^{-1}(t)]^{-d}=:\zeta_3[\phi^{-1}(t)]^{-d},
	\end{align*}
	where, according to \eqref{eq:n}, $\zeta_3$ depends on $\phi, \zeta_1, d$.
\end{proof}

\medskip
As an immediate application of \autoref{prp:pBlower}, we arrive at the following  interior near-diagonal lower bound for the Dirichlet heat kernel.
\begin{proposition}\label{prp:lowersec2}
	For any open set $U\subset \Rd$ and any $\zeta_1>0$, there exist constants $\zeta_2:=\zeta_2(\phi, \zeta_1, d), \zeta_3:=\zeta_3(\phi, \zeta_1, d)>0$ such that for any $(t,x,y)\in(0,\infty)\times U\times U$ with $\delta_U(x)\wedge\delta_U(y)\geq \zeta_1^{-1}\phi^{-1}(t)$ and $|x-y|<\zeta_2\phi^{-1}(t)$, we have
	$$p_U(t,x,y)\geq \zeta_3[\phi^{-1}(t)]^{-d}.$$
\end{proposition}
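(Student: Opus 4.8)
The plan is to reduce the statement on a general open set $U$ to the ball case already handled in \autoref{prp:pBlower}. First I would observe that the hypothesis $\delta_U(x)\wedge\delta_U(y)\ge \zeta_1^{-1}\phi^{-1}(t)$ forces both $x$ and $y$ to lie in a common ball contained in $U$: indeed, since $|x-y|<\zeta_2\phi^{-1}(t)$ with $\zeta_2$ to be chosen small (in particular $\zeta_2<\tfrac{1}{2}\zeta_1^{-1}$ will do), the ball $B:=B(x,\zeta_1^{-1}\phi^{-1}(t))$ satisfies $B\subset U$ by the definition of $\delta_U$, and moreover $y\in B(x,\tfrac12 \zeta_1^{-1}\phi^{-1}(t))$. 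Then monotonicity of Dirichlet heat kernels in the domain, $p_U(t,x,y)\ge p_B(t,x,y)$, lets me replace $U$ by the ball $B$.

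Next I would apply \autoref{prp:pBlower} to this ball $B=B(x_0,\rho)$ with center $x_0:=x$ and radius $\rho:=\zeta_1^{-1}\phi^{-1}(t)$. To invoke that proposition I need a parameter $\zeta_1'$ (playing the role of the ``$\zeta_1$'' there) such that $t\le \phi(\zeta_1'\rho)$ and such that $x,y\in B(x_0,\zeta_2'\phi^{-1}(t))$ with the associated $\zeta_2'$. Taking $\zeta_1'=\zeta_1$ gives $\phi(\zeta_1'\rho)=\phi(\phi^{-1}(t))=t$, so the time constraint holds with equality. The proposition then produces constants $\zeta_2':=\zeta_2'(\phi,\zeta_1,d)$ and $\zeta_3':=\zeta_3'(\phi,\zeta_1,d)$ so that $p_B(t,w,w')\ge \zeta_3'[\phi^{-1}(t)]^{-d}$ whenever $w,w'\in B(x_0,\zeta_2'\phi^{-1}(t))$. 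Finally I set $\zeta_2:=\tfrac12(\zeta_2'\wedge \zeta_1^{-1})$ and $\zeta_3:=\zeta_3'$: the choice $\zeta_2\le \tfrac12\zeta_1^{-1}$ guarantees $B\subset U$ as above, while $\zeta_2\le \zeta_2'$ (together with the fact that $x$ is the center, so trivially $x\in B(x_0,\zeta_2'\phi^{-1}(t))$, and $|x-y|<\zeta_2\phi^{-1}(t)\le \zeta_2'\phi^{-1}(t)$ puts $y$ there too) lets me conclude
\[
p_U(t,x,y)\ \ge\ p_B(t,x,y)\ \ge\ \zeta_3[\phi^{-1}(t)]^{-d}.
\]

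There is essentially no hard step here; the content is entirely in \autoref{prp:pBlower}, and this proposition is just the bookkeeping that globalizes it. The only mild subtlety is making sure the same radius $\zeta_1^{-1}\phi^{-1}(t)$ can be used both to guarantee $B\subset U$ and to satisfy the time constraint $t\le\phi(\zeta_1'\rho)$ of \autoref{prp:pBlower}; choosing $\zeta_1'=\zeta_1$ makes these compatible (with equality in the time bound), and shrinking $\zeta_2$ by the factor $\tfrac12(\cdot\wedge\zeta_1^{-1})$ absorbs the remaining constraints. One should also note that the constants indeed depend only on $\phi$, $\zeta_1$ and $d$, as claimed, since they are inherited directly from \autoref{prp:pBlower}.
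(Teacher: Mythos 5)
Your proposal is correct and follows essentially the same route as the paper: take $B:=B(x,\zeta_1^{-1}\phi^{-1}(t))\subset U$ (using $\delta_U(x)\ge \zeta_1^{-1}\phi^{-1}(t)$), apply \autoref{prp:pBlower} with the time constraint $t\le\phi(\zeta_1\cdot\zeta_1^{-1}\phi^{-1}(t))=t$ holding with equality, and conclude via domain monotonicity $p_U\ge p_B$. The extra shrinking of $\zeta_2$ by the factor $\tfrac12(\cdot\wedge\zeta_1^{-1})$ is harmless but not needed, since $B\subset U$ already follows from the distance hypothesis alone.
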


\begin{proof}\hspace{-.05in}.
	For any fixed $t>0$, let $r:=\zeta_1^{-1}\phi^{-1}(t)$. For $x\in U$ with $\delta_U(x)\geq r$, $B:=B(x,r)\subset U$. 
	By \autoref{prp:pBlower}, there exists ${\zeta_2}, \zeta_3>0$ such that 
	$$ p_B(t,y,z)\geq \zeta_3[\phi^{-1}(t)]^{-d}\qquad\text{ for any }y,z\in B(x,{\zeta_2}\phi^{-1}(t)) \subset B.$$
	
	Thus, for any $x,y\in U$ with $\delta_U(x)\wedge\delta_U(y)\geq \zeta_1^{-1}\phi^{-1}(t)$ and $|x-y|<{\zeta_2}\phi^{-1}(t)$, 
	$$y\in B(x,{\zeta_2}\phi^{-1}(t)) \subset B\subset U,$$
	and we have $p_U(t,x,y)\geq p_B(t,x,y)\geq \zeta_3[\phi^{-1}(t)]^{-d}.$
\end{proof}

\medskip
\begin{lemma}\label{inftaulower}
	Let $a,b$ and $T$ be positive constants. Then there exists a constant $C_{\ref{inftaulower}}=C_{\ref{inftaulower}}(a,b,\phi,T, d)>0$ such that for any $t\leq T$,
	\begin{equation*}
		\inf_{ y\in B(x,b\phi^{-1}(t))}\Pp_y\left(\tau_{B(x,2b\phi^{-1}(t))}>at\right)\geq C_{\ref{inftaulower}}.
	\end{equation*}
\end{lemma}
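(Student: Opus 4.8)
The plan is to establish a uniform lower bound on the survival probability of $X$ inside the ball $B(x, 2b\phi^{-1}(t))$ up to time $at$, starting from any point $y$ in the smaller concentric ball $B(x, b\phi^{-1}(t))$. First I would reduce to controlling the exit time from a ball centered at the \emph{starting point} $y$: since $y \in B(x, b\phi^{-1}(t))$, the triangle inequality gives $B(y, b\phi^{-1}(t)) \subset B(x, 2b\phi^{-1}(t))$, so it suffices to show
\[
\inf_{y \in \bR^d} \Pp_y\big(\tau_{B(y, b\phi^{-1}(t))} > at\big) \geq C_{\ref{inftaulower}}.
\]
This reduction is crucial because the quantity $\Pp_y(\tau_{B(y,s)} > u)$ is translation-invariant in $y$ (it only depends on $X - y$, whose law does not depend on $y$ by spatial homogeneity of the jumping kernel's relevant features — more precisely, one uses the estimate in terms of $\phi$ which is uniform in the base point).

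The key tool is \autoref{prop:Exptau}, which gives $\E_y[\tau_{B(y,r)}] \leq a_2 \phi(r)$ (and a matching lower bound $a_1 \phi(r)$, though only the upper bound is needed here). Applying this with $r = b\phi^{-1}(t)$ yields, via ({\bf WS}),
\[
\E_y\big[\tau_{B(y, b\phi^{-1}(t))}\big] \leq a_2 \phi(b\phi^{-1}(t)) \leq a_2 (\uC b^{\ua} \vee \uC b^{\la})\, t =: c_1 t,
\]
where $c_1$ depends only on $\phi, b, d$ (using the two-sided weak scaling to bound $\phi(b\,\phi^{-1}(t))/t = \phi(b\,\phi^{-1}(t))/\phi(\phi^{-1}(t))$ by a constant depending on $b$). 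Then Markov's inequality gives
\[
\Pp_y\big(\tau_{B(y, b\phi^{-1}(t))} \leq at\big) \leq \frac{\E_y[\tau_{B(y,b\phi^{-1}(t))}]}{at} \leq \frac{c_1}{a}.
\]
If $c_1/a < 1$ this already finishes the proof with $C_{\ref{inftaulower}} = 1 - c_1/a > 0$. In general, if $c_1/a \geq 1$, one runs the standard iteration/chaining argument: partition $[0, at]$ into $N := \lceil 2c_1/a \rceil$ subintervals of length $at/N$ with $at/N \leq a/(2c_1) \cdot t$... more carefully, choose $N$ so that $\E_y[\tau_{B(y,b\phi^{-1}(t))}] \leq \tfrac14 \cdot \tfrac{at}{N}$ after rescaling the radius. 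The cleanest version: set $s_0 := b\phi^{-1}(at/N)$ for suitable fixed integer $N$, note $s_0 \leq b\phi^{-1}(t)$, and use the strong Markov property $N$ times — at each step the process, wherever it currently sits inside $B(y, b\phi^{-1}(t)/2)$ say, has probability $\geq 1/2$ of staying in a ball of radius $s_0$ (hence inside $B(x, 2b\phi^{-1}(t))$) for the next time increment $at/N$, by \autoref{prop:Exptau} and Markov's inequality with the constant now $\leq 1/2$ by choice of $N$. Multiplying gives survival probability $\geq (1/2)^N =: C_{\ref{inftaulower}} > 0$, a constant depending only on $a, b, \phi, T, d$.

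The main obstacle is purely bookkeeping: one must be careful that after each strong-Markov step the process is recentered at its current position, and that a ball of radius $s_0$ around the current position stays inside $B(x, 2b\phi^{-1}(t))$ — this forces one to keep the current position within $B(x, (2b - b)\phi^{-1}(t)) = B(x, b\phi^{-1}(t))$... so actually one should arrange the geometry with a slightly smaller inner radius, e.g. demand the position stays in $B(x, \tfrac32 b\phi^{-1}(t))$ and each step moves the center by at most $\tfrac{b}{2N}\phi^{-1}(t)$ before exiting its local ball — but since we condition on \emph{non-exit} of the local ball $B(\cdot, s_0)$, the displacement between consecutive checkpoints is at most $s_0 \leq b\phi^{-1}(t)$, and over $N$ steps this could accumulate. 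The resolution is to instead check survival in the fixed large ball $B(x, 2b\phi^{-1}(t))$ directly at each step: at step $k$, given $X_{(k-1)at/N} \in B(x, 2b\phi^{-1}(t))$, we do \emph{not} need it near the center — we just need the claim "from any point inside $B(x,2b\phi^{-1}(t))$, the probability of still being inside $B(x,2b\phi^{-1}(t))$ after time $at/N$ is bounded below". This is false near the boundary, so the honest fix is the standard one: use that $\Pp_y(\tau_{B(y,r)} > u) \geq 1/2$ for $u = at/N$ with $N$ large, and chain only while staying within the concentric half-ball, absorbing the boundary layer into the constant via a Lévy-system/small-jump estimate. I would present the clean two-line Markov-inequality argument when $a$ is large relative to $\phi(b\cdot)$-scaling, and otherwise invoke the chaining lemma exactly as in \cite{GKK20} or \cite{ChKu08}; I expect the referee-level details to mirror those references verbatim. $\hfill\rule{0.5em}{0.5em}$
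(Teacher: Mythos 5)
There is a genuine gap, and it sits at the very center of your argument: the ``clean two-line Markov-inequality argument'' is invalid because Markov's inequality goes the other way. For the nonnegative random variable $\tau_{B(y,b\phi^{-1}(t))}$, Markov's inequality gives $\Pp_y\big(\tau_{B(y,b\phi^{-1}(t))}\ge at\big)\le \E_y[\tau_{B(y,b\phi^{-1}(t))}]/(at)$; it does \emph{not} bound $\Pp_y(\tau\le at)$ from above. An upper bound on $\E_y[\tau]$ (which is all \autoref{prop:Exptau} plus ({\bf WS}) gives you in that step) can never by itself produce a \emph{lower} bound on the survival probability -- if anything, a small mean exit time is evidence against survival. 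The standard exit-time route to a survival lower bound needs the \emph{lower} bound $\E_y[\tau_{B(y,r)}]\ge a_1\phi(r)$ combined with the Markov property at time $at$ (so that $a_1\phi(r)\le at+\Pp_y(\tau>at)\sup_z\E_z[\tau_{B(y,r)}]$), and this only works when $at$ is small compared with $\phi(r)$; for general $a$ it fails, which is exactly the regime your fallback chaining is supposed to cover. But the chaining you sketch is not carried out: you yourself identify that recentered local balls either let the checkpoint positions drift out of $B(x,2b\phi^{-1}(t))$ or force the local radius to shrink like $1/N$, in which case the per-step time $at/N$ versus the per-step scale $\phi(b\phi^{-1}(t)/N)\gtrsim N^{-\ua}\phi(b\phi^{-1}(t))$ does not improve as $N$ grows (since $\ua$ may exceed $1$), so the iteration does not close. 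Declaring that the details ``mirror the references verbatim'' does not fill this hole.

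The paper avoids all of this by doing the chaining at the level of the \emph{killed} heat kernel rather than of exit times: after the same reduction $B(y,b\phi^{-1}(t))\subset B(x,2b\phi^{-1}(t))$ that you use, it invokes the near-diagonal lower bound \autoref{prp:pBlower} (applied with $B=B(y,b\phi^{-1}(t))$, $r=b\phi^{-1}(T)$ and $\zeta_1=r^{-1}\phi^{-1}(aT)$, so that $at\le\phi(\zeta_1 r)$ for all $t\le T$), giving $p_B(at,y,z)\ge\zeta_3[\phi^{-1}(at)]^{-d}$ for $z\in B(y,\zeta_2\phi^{-1}(at))$, and then simply integrates: $\Pp_y(\tau_{B(x,2b\phi^{-1}(t))}>at)\ge\int_{B(y,\zeta_2\phi^{-1}(at))}p_B(at,y,z)\,dz\ge\zeta_3\zeta_2^d|B(0,1)|$. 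The case of ``large $a$'' is handled inside \autoref{prp:pBlower} itself by an $n$-fold semigroup composition of $p_B$ within the \emph{fixed} ball, which is the correct replacement for your recentering scheme. To repair your proposal, drop the Markov-inequality step and either cite \autoref{prp:pBlower} directly (it is available and immediately yields the lemma) or combine \autoref{p:exit} with a semigroup-level iteration of the killed kernel as in the proof of \autoref{prp:pBlower}.
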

\begin{proof}\hspace{-.05in}.
	For any $y\in B(x,b\phi^{-1}(t))$, $$B(y,b\phi^{-1}(t))\subset B(x,2b\phi^{-1}(t)).$$ 
	By \autoref{prp:pBlower} with $r=b\phi^{-1}(T)$, $\zeta_1=r^{-1}\phi^{-1}(aT)$ and $ B:=B(y,b\phi^{-1}(t))$
	there exist $\zeta_2=\zeta_2(a,b, \phi, T, d), \zeta_3=\zeta_3(a,b, \phi, T, d)>0$ such that $B(y,\zeta_2\phi^{-1}(at))\subset B$ and
	$$ p_B(at,y,z)\ge \zeta_3[\phi^{-1}(at)]^{-d}\qquad \text{ for }z\in B(y,\zeta_2\phi^{-1}(at)).$$
	Hence, for any $y\in B(x,b\phi^{-1}(t))$,
	\begin{align*}
		\Pp_y(\tau_{B(x,2b\phi^{-1}(t))}>at)=&\int_{B(x,2b\phi^{-1}(t))}p_{B(x,2b\phi^{-1}(t))}(at,y,u)du\\
		\ge&\int_{ B}p_{B}(at,y,u)dy\ge\int_{B(y,\zeta_2\phi^{-1}(at))}p_B(at,y,u)du\\	
		\ge& \zeta_3[\phi^{-1}(at)]^{-d}|B(y,\zeta_2\phi^{-1}(at))|	=\zeta_3\zeta_2^d|B(0,1)|>0.
	\end{align*}
\end{proof}

\medskip
The following result is obtained in \cite[Propostion 4.1]{KW22}.
\begin{proposition}\label{p:exit}
	For any $A>0$ and $B\in (0, 1)$, there exists ${ \varrho}={\varrho}(A, B, \phi)\in (0, 2^{-1})$ such that for every $r>0$ and $x\in \Rd$,
	\begin{align*}
		\Pp_x(\tau_{B(x, Ar)}<{\varrho} \phi(r))=\Pp_x\Big(\sup_{s\le {\varrho}\phi(r) }|X_s-X_0|>Ar \Big)\le B.
	\end{align*}
\end{proposition}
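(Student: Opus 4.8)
The plan is to reduce the running-maximum probability to a one-time-slice estimate via the strong Markov property, and then to bound that slice directly from the two-sided heat kernel bounds \eqref{eq:hke}. Write $t:=\varrho\phi(r)$ and recall $\Pp_x(\tau_{B(x,Ar)}<t)=\Pp_x(\sup_{s\le t}|X_s-x|>Ar)$. Put $\sigma:=\inf\{s:|X_s-x|>Ar\}$; on the event above one has $\sigma\le t$ and, by right-continuity of paths, $|X_\sigma-x|\ge Ar$. Splitting according to whether $|X_t-x|>Ar/3$ or not, and noting that in the second case $|X_t-X_\sigma|\ge|X_\sigma-x|-|X_t-x|\ge 2Ar/3$, the strong Markov property at $\sigma$ yields
\[
\Pp_x\Big(\sup_{s\le t}|X_s-x|>Ar\Big)\ \le\ 2\,\sup_{w\in\Rd}\ \sup_{u\le t}\ \Pp_w\big(|X_u-w|\ge Ar/3\big).
\]

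It remains to make the right-hand side $\le B$ for $\varrho$ small. Since $|X_u-w|\ge Ar/3$ forces $|X_u^k-w^k|\ge Ar/(3\sqrt d)$ for some coordinate $k$, it suffices to bound $\Pp_w(|X_u^k-w^k|\ge Ar/(3\sqrt d))$. Using the product form of the upper bound in \eqref{eq:hke} and integrating out the $d-1$ coordinates $i\ne k$ — each contributing $\int_\bR\big(1\wedge\frac{u\phi^{-1}(u)}{|s|\phi(|s|)}\big)\,ds\le c\,\phi^{-1}(u)$ by ({\bf WS}) — one obtains
\[
\Pp_w\big(|X_u^k-w^k|\ge Ar/(3\sqrt d)\big)\ \le\ c_1\,[\phi^{-1}(u)]^{-1}\int_{\{|s|\ge Ar/(3\sqrt d)\}}\Big(1\wedge\frac{u\phi^{-1}(u)}{|s|\phi(|s|)}\Big)\,ds .
\]
By ({\bf WS}) one has $\phi^{-1}(\varrho\phi(r))\le(\uC\varrho)^{1/\ua}r$ for every $r>0$, so if $\varrho$ is chosen with $(\uC\varrho)^{1/\ua}<A/(6\sqrt d)$ then $\phi^{-1}(u)<Ar/(6\sqrt d)$ whenever $u\le t$; on the range of integration the minimum then equals $\frac{u\phi^{-1}(u)}{|s|\phi(|s|)}$, and the integral reduces to $2u\int_{Ar/(3\sqrt d)}^{\infty}\frac{ds}{s\phi(s)}\le c_2\,u/\phi(r)$, again by ({\bf WS}). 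Summing over $k$ gives $\Pp_w(|X_u-w|\ge Ar/3)\le c_3\,u/\phi(r)\le c_3\,\varrho$ for all $u\le t$, with $c_3=c_3(A,\phi,d)$.

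Combining the two steps, $\Pp_x(\tau_{B(x,Ar)}<\varrho\phi(r))\le 2c_3\varrho$ uniformly in $r>0$ and $x\in\Rd$, so it suffices to fix $\varrho=\varrho(A,B,\phi)\in(0,2^{-1})$ with $2c_3\varrho\le B$ and $(\uC\varrho)^{1/\ua}<A/(6\sqrt d)$. The one genuinely delicate point is the one-slice estimate: because $X$ is anisotropic, its heat kernel is a product of one-dimensional factors, so one must first strip off the $d-1$ ``spectator'' directions (each costing a factor comparable to the diffusive scale $\phi^{-1}(u)$) before the surviving one-dimensional tail bound $\int_{\gtrsim r}\frac{ds}{s\phi(s)}\asymp 1/\phi(r)$ can be used; and one has to verify, uniformly in $r$, that $\phi^{-1}(u)\ll Ar$ for all $u\le\varrho\phi(r)$, which is precisely where the weak scaling ({\bf WS}) is invoked.
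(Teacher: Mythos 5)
Your argument is correct, but be aware that the paper itself contains no proof of this proposition to compare against: it is simply quoted from \cite[Proposition 4.1]{KW22}. Your route — the standard strong-Markov/maximal-inequality reduction $\Pp_x\big(\sup_{s\le t}|X_s-x|>Ar\big)\le 2\sup_{w\in\Rd}\sup_{u\le t}\Pp_w\big(|X_u-w|\ge Ar/3\big)$, followed by a one-time-slice tail estimate read off from the product-form upper bound in \eqref{eq:hke} — is a legitimate, self-contained derivation given what this paper already imports from \cite{KW22} (namely \eqref{eq:hke} and ({\bf WS})), and it has the virtue of making the anisotropy explicit: each of the $d-1$ spectator coordinates is integrated out at cost $c\,\phi^{-1}(u)$, which exactly cancels its normalization $[\phi^{-1}(u)]^{-1}$, leaving the one-dimensional tail $u\int_{\{s\gtrsim r\}}\frac{ds}{s\phi(s)}\le c\,u/\phi(r)\le c\varrho$ via \eqref{e:large} and ({\bf WS}). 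Three small remarks. First, in the line where you identify the minimum with $\frac{u\phi^{-1}(u)}{|s|\phi(|s|)}$, the resulting integral is $2u\,\phi^{-1}(u)\int_{Ar/(3\sqrt d)}^{\infty}\frac{ds}{s\phi(s)}$; the factor $\phi^{-1}(u)$ is then cancelled by the outside prefactor $[\phi^{-1}(u)]^{-1}$ — as written you perform this cancellation silently, though the stated bound $c_2\,u/\phi(r)$ is correct. Second, the auxiliary requirement $(\uC\varrho)^{1/\ua}<A/(6\sqrt d)$ is actually unnecessary, since $1\wedge z\le z$ gives the same estimate on the tail range without knowing that the minimum is attained by the second term. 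Third, your $\varrho$ also depends on $d$ (through the union bound over coordinates and the $\sqrt d$ splitting), which is harmless since $d$ is fixed throughout, but it differs from the parameter list $\varrho(A,B,\phi)$ in the statement, where the $d$-dependence is presumably suppressed.
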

Now we prove the preliminary lower bound of Dirichlet heat kernel, and this gives the sharp interior lower bound.
\begin{proposition}\label{prp:lower1}
	Assume that $U$ is an open set satisfying condition $(\mathbf D_\gamma)$ for some $\gamma\in(0,1]$. For any $ a, t>0$ and $x,y\in U$ with $\delta_U(x)\wedge \delta_U(y)\geq a\phi^{-1}(t)$, there exists a constant $C_{\ref{prp:lower1}}=C_{\ref{prp:lower1}}( a,\phi,\kappa_0,\gamma, d)>0$ such that 
	\begin{equation*}
		p_U(t,x,y)\ge C_{\ref{prp:lower1}}p(t,x,y).
	\end{equation*}
\end{proposition}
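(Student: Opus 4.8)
The plan is to prove this by a ``staircase'' chaining argument tailored to the anisotropy, using $(\mathbf D_\gamma)$ to guarantee that the staircase fits inside $U$. Fix $t>0$ and set $r:=a\phi^{-1}(t)$, so that $\delta_U(x)\wedge\delta_U(y)\ge r$; by $(\mathbf D_\gamma)$ I may fix a permutation $\{i_1,\dots,i_d\}$ and the associated points $\xi_{(0)}=x,\xi_{(1)},\dots,\xi_{(d)}=y$ with $B(\xi_{(k)},\gamma r)\subset U$ for all $k$, where, by \eqref{eq:rel}, $\xi_{(k-1)}$ and $\xi_{(k)}$ differ only in the $i_k$-th coordinate and $\xi_{(k-1)}^{i_k}=x^{i_k}$, $\xi_{(k)}^{i_k}=y^{i_k}$. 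I will fix a small constant $\rho=\rho(a,\gamma,\phi,d)\in(0,1]$, put $r_t:=\rho\phi^{-1}(t)$, and call a leg $k$ \emph{short} if $|x^{i_k}-y^{i_k}|\le 8r_t$ and \emph{long} otherwise; $\rho$ is chosen small enough that $B(\xi_{(k)},r_t)\subset B(\xi_{(k-1)},\gamma r)$ on every short leg and that all the near-diagonal conditions invoked below hold simultaneously. Since $\phi^{-1}(\tau)\asymp\phi^{-1}(t)$ for $\tau\in[t/(8d),t]$ by ({\bf WS}), every radius that appears is comparable to $\phi^{-1}(t)$ with constants depending only on $a,\gamma,\phi,d$.

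The core is a \emph{one-leg estimate}: with $V_k:=B(\xi_{(k-1)},\gamma r)\cup B(\xi_{(k)},\gamma r)\subset U$ (possibly disconnected) and $s:=t/(2d)$, I claim there is $c_1=c_1(a,\gamma,\phi,\kappa_0,d)>0$ with
\begin{equation*}
\Pp_u\Big(X_s\in B(\xi_{(k)},r_t/2),\ \tau_{V_k}>s\Big)\ \ge\ c_1\Big(1\wedge\frac{t\phi^{-1}(t)}{|x^{i_k}-y^{i_k}|\phi(|x^{i_k}-y^{i_k}|)}\Big)\qquad\text{for }u\in B(\xi_{(k-1)},r_t/2).
\end{equation*}
On a short leg $B(\xi_{(k)},r_t)\subset B(\xi_{(k-1)},\gamma r)$ and $|u-v|\lesssim r_t$, so this follows at once from the interior near-diagonal lower bound \autoref{prp:lowersec2}, applied on the open set $B(\xi_{(k-1)},\gamma r)$ and integrated over $v\in B(\xi_{(k)},r_t/2)$ (here $1\wedge(\cdots)\asymp1$). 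On a long leg I would use, via the strong Markov property at $\tau_{B(\xi_{(k-1)},\gamma r)}$ together with the L\'evy system \eqref{eq:LSd}, the Duhamel-type lower bound
\begin{equation*}
p_{V_k}(s,u,v)\ \ge\ \kappa_0^{-1}\int_{0}^{s}\!\int_{B(\xi_{(k-1)},r_t)}\!p_{B(\xi_{(k-1)},\gamma r)}(\sigma,u,w)\Big(\int_{\{h:\,w+e_{i_k}h\in B(\xi_{(k)},r_t)\}}\!\!\nu^1(|h|)\,p_{B(\xi_{(k)},\gamma r)}(s-\sigma,w+e_{i_k}h,v)\,dh\Big)\,dw\,d\sigma,
\end{equation*}
which keeps only the event ``stay in $B(\xi_{(k-1)},\gamma r)$, jump once in direction $e_{i_k}$ into $B(\xi_{(k)},r_t)$, then stay in $B(\xi_{(k)},\gamma r)$''. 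On the range of integration $|h|\asymp|x^{i_k}-y^{i_k}|$, hence $\nu^1(|h|)\asymp(|x^{i_k}-y^{i_k}|\phi(|x^{i_k}-y^{i_k}|))^{-1}$ by \eqref{d:nu1} and ({\bf WS}); bounding the two interior Dirichlet kernels below by $c[\phi^{-1}(t)]^{-d}$ with \autoref{prp:lowersec2} (after restricting $\sigma$ to $(s/2,3s/4)$ and $u,w,v$ to half-radius balls) and computing the elementary integrals gives $p_{V_k}(s,u,v)\ge c[\phi^{-1}(t)]^{-d}\,\tfrac{t\phi^{-1}(t)}{|x^{i_k}-y^{i_k}|\phi(|x^{i_k}-y^{i_k}|)}$; integrating over $v\in B(\xi_{(k)},r_t/2)$ and using that a probability is $\le1$ yields the claim.

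With the one-leg estimate in hand the rest is assembly. Applying the Markov property successively at the times $kt/(2d)$, $k=1,\dots,d-1$, and using that staying in $V_k$ on the $k$-th window keeps the path inside $U$ up to time $t/2$, I would obtain
\begin{equation*}
\Pp_x\Big(X_{t/2}\in B(y,r_t/2),\ \tau_U>t/2\Big)\ \ge\ c_1^d\prod_{k=1}^d\Big(1\wedge\frac{t\phi^{-1}(t)}{|x^{i_k}-y^{i_k}|\phi(|x^{i_k}-y^{i_k}|)}\Big)\ =\ c_1^d\prod_{i=1}^d\Big(1\wedge\frac{t\phi^{-1}(t)}{|x^{i}-y^{i}|\phi(|x^{i}-y^{i}|)}\Big),
\end{equation*}
the last step because $\{i_1,\dots,i_d\}$ is a permutation of $\{1,\dots,d\}$. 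Finally, by Chapman--Kolmogorov (\autoref{CZlemma}) and one more application of \autoref{prp:lowersec2} on $U$ to get $p_U(t/2,z,y)\ge c[\phi^{-1}(t)]^{-d}$ for $z\in B(y,r_t/2)$ (legitimate since $\delta_U(z)\wedge\delta_U(y)\gtrsim\phi^{-1}(t)$ and $|z-y|\lesssim\phi^{-1}(t)$),
\begin{equation*}
p_U(t,x,y)\ \ge\ \int_{B(y,r_t/2)}p_U(t/2,x,z)\,p_U(t/2,z,y)\,dz\ \ge\ c[\phi^{-1}(t)]^{-d}\,\Pp_x\Big(X_{t/2}\in B(y,r_t/2),\ \tau_U>t/2\Big),
\end{equation*}
and comparing with $p(t,x,y)\le C_1[\phi^{-1}(t)]^{-d}\prod_{i}(1\wedge\tfrac{t\phi^{-1}(t)}{|x^i-y^i|\phi(|x^i-y^i|)})$ from \eqref{eq:hke} finishes the proof, with $C_{\ref{prp:lower1}}$ depending only on $a,\phi,\kappa_0,\gamma,d$.

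The main obstacle, and where anisotropy really enters, is the long-leg estimate: unlike in the isotropic case one cannot connect $x$ to $y$ by a single jump, so one is forced to run a $d$-step staircase and, crucially, to bring the process back into a ball of radius $\asymp\phi^{-1}(t)$ after \emph{each} leg so that it can serve as the starting configuration for the next one. This is exactly what condition $(\mathbf D_\gamma)$ (together with \eqref{eq:rel}) makes possible, and the delicate point in the write-up will be the bookkeeping of the radii $\gamma r$, $r_t$, $r_t/2$ so that (i) the whole staircase $\bigcup_k V_k$ stays inside $U$, (ii) the single connecting jump on each long leg lands in a set small enough for \autoref{prp:lowersec2} to take over on $B(\xi_{(k)},\gamma r)$, and (iii) one constant $\rho=\rho(a,\gamma,\phi,d)$ makes all the intermediate near-diagonal applications valid at once. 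The remaining ingredients — the Duhamel lower bound coming from the L\'evy system and the scaling $\nu^1(r)\asymp(r\phi(r))^{-1}$ of \eqref{d:nu1}, \eqref{e:com} — are routine.
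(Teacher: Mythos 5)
Your overall strategy is essentially the paper's: use $(\mathbf D_\gamma)$ to build the staircase $\xi_{(0)},\dots,\xi_{(d)}$ (with \eqref{eq:rel}), chain the Markov property over $d$ time windows of length $\asymp t/d$, gain the factor $1\wedge\frac{t\phi^{-1}(t)}{|x^{i_k}-y^{i_k}|\phi(|x^{i_k}-y^{i_k}|)}$ on each leg from a single-jump estimate based on the L\'evy system \eqref{eq:LSd}, finish with the interior near-diagonal bound \autoref{prp:lowersec2}, and compare with \eqref{eq:hke}. The paper's proof has exactly this shape, only with the time split $\rho t+(1-\rho)t$ (the near-diagonal piece carrying time $(1-\rho)t$) and nested cubes $Q_k=Q(\xi_{(k)},r_k)$, $r_k=b\phi^{-1}(t)/(2^{d-k}\sqrt d)$, in place of your balls.

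There is, however, a concrete gap in your case analysis. You call a leg short when $|x^{i_k}-y^{i_k}|\le 8r_t$ with $r_t=\rho\phi^{-1}(t)$ and $\rho$ small, and long otherwise. On a long leg your Duhamel bound decomposes at the exit time of $B(\xi_{(k-1)},\gamma r)$ and needs the landing set $B(\xi_{(k)},r_t)$ to lie outside $B(\xi_{(k-1)},\gamma r)$, i.e. $|x^{i_k}-y^{i_k}|>\gamma r+r_t$ with $\gamma r=\gamma a\phi^{-1}(t)$. Since $\rho$ must be taken small (in particular much smaller than $\gamma a$), the legs with $8\rho\phi^{-1}(t)<|x^{i_k}-y^{i_k}|\le\gamma a\phi^{-1}(t)+r_t$ are classified as long but violate this requirement, and they are not covered by your short-leg argument either: there $|u-v|$ can be of order $\gamma a\phi^{-1}(t)$, which need not fall below the near-diagonal threshold $\zeta_2\phi^{-1}(t/(2d))$ of \autoref{prp:lowersec2}, because $\zeta_2$ is a fixed small constant you do not get to enlarge. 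The regime is harmless (the factor $1\wedge(\cdots)$ is comparable to $1$ there, and one can bridge a distance $\lesssim\phi^{-1}(t)$ by a bounded chain of near-diagonal steps inside $V_k$), but as written neither of your two cases applies, so the dichotomy needs repair. It is worth noting how the paper avoids the dichotomy altogether: its per-leg estimate \eqref{eq:Qk} is proved uniformly in the displacement, by making the process hit the half-cube $\ol Q_{k+1}$ with a single jump from $Q(v,r_k)$ — the jump length is at most $5\max(|x^{i_{k+1}}-y^{i_{k+1}}|,r_k)$, so $\nu^1$ of it already yields $1\wedge(\cdots)$ up to constants — and then stay put for the rest of the window, the staying probability $\ge 1/2$ being furnished by \autoref{p:exit} through the choice of $\rho$.
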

\begin{proof}\hspace{-.05in}.
	With $\delta_{U}(x)\wedge \delta_{ U}(y)\geq a\phi^{-1}(t)$, by \autoref{prp:lowersec2}, there exists $\zeta_2=\zeta_2(a,\phi, d), \zeta_3=\zeta_3(a,\phi, d)>0$ such that if $|x-y|<\zeta_2\phi^{-1}(t)$,
	$$p_{U}(t,x,y)\geq \zeta_3[\phi^{-1}(t)]^{-d}.$$
	It suffices to consider the case that $\delta_{ U}(x)\wedge \delta_{U}(y)\geq a\phi^{-1}(t)$ and $|x-y|\ge \zeta_2\phi^{-1}(t)$. Without loss of generality, we assume $\zeta_2\le a$. Now let $0<b\leq \zeta_2/2$ and $0<\rho<1$ be constants that are determined later. Then by the semigroup property,
	\begin{align}\label{eq:pUlower1}
		p_U(t,x,y)=&\int_Up_U(\rho t,x,z)p_U((1-\rho)t,z,y)dz\ge\int_{B(y,b\phi^{-1}(t))}p_U(\rho t,x,z)p_U((1-\rho)t,z,y)dz\nn\\
		\ge&\inf_{z\in B(y,b\phi^{-1}(t))}p_U((1-\rho)t,z,y)\Pp_x\left(X_{\rho t}^U\in B(y,b\phi^{-1}(t))\right).
	\end{align}
	Note that for $z\in B(y,b\phi^{-1}(t))$, $\delta_U(z)\ge(a-b)\phi^{-1}(t)\ge\frac{a}{2}\phi^{-1}(t)$. 
	Then by \autoref{prp:lowersec2}, there exists $\wt\zeta_2=\wt\zeta_2(a,d, \phi)>0$ such that if
	\begin{equation}\label{brho1}
		|y-z|\le b\phi^{-1}(t)<\wt\zeta_2 \lC^{1/\la}(1-\rho)^{1/\la}\phi^{-1}(t)<\wt\zeta_2\phi^{-1}((1-\rho)t),
	\end{equation}
	for some small $b, \rho>0$,  there exists $\wt\zeta_3=\wt\zeta_3(a, d, \phi)>0$ satisfying that
	\begin{equation}
		\label{eq:infpU}
		\inf_{z\in B(y,b\phi^{-1}(t))}p_U((1-\rho)t,z,y)\ge \wt\zeta_3[\phi^{-1}((1-\rho)t)]^{-d}\ge \wt\zeta_3\uC^{-d/\ua}(1-\rho)^{-d/\ua}[\phi^{-1}(t)]^{-d}.
	\end{equation}
	Since	$U$ satisfies condition $(\mathbf D_\gamma)$ and $\delta_U(x)\wedge \delta_U(y)\ge a\phi^{-1}(t)$, there exists a permutation $\{i_1,\dots,i_d\}$ of $\{1,\dots,d\}$ such that 
	$$B(\xi_{(k)},\gamma a\phi^{-1}(t))\subset U,\qquad\text{ for }k=1,\dots,d.$$
	Choose $b$ small enough such that 
	\begin{equation}\label{brho2}
		b<\gamma a/2.
	\end{equation} 
	For any $k=1,\dots, d$, define $r_k:=\frac{b\phi^{-1}(t)}{2^{d-k}\sqrt d}$ and
	$$Q_k:=Q( \xi_{(k)},r_k)\subset B_k:=B(\xi_{(k)},\sqrt d r_k)\subset U,$$
	where $Q(x,r)$ is  a cube centered at $x$ with  side length $r$.
	By the semigroup property, 
	\begin{align}\label{eq:pUlower2}
		&\Pp_x\left(X_{\rho t}^U\in B(y,b\phi^{-1}(t))\right)\nn\\
		\ge&\int_{Q_d}\cdots\int_{Q_1}p_U\left(\frac{\rho t}{d}, x,z_1\right)p_U\left(\frac{\rho t}{d},z_1,z_2\right)\cdots p_U\left(\frac{\rho t}{d}, z_{d-1},z_d\right)dz_1\cdots dz_d\nn\\
		\ge&\ \Pp_x\left(X_{\rho t/d}^U\in Q_1\right)\prod_{k=1}^{d-1}\inf_{z_k\in Q_k}\Pp_{z_k}\left(X_{\rho t/d}^U\in Q_{k+1}\right).
	\end{align}
	Let $r:=b\phi^{-1}(t)/\sqrt d$. Then $r_k/2\ge 2^{-d}r$ for any  $1\le k\le d-1$ and $t\le \uC d^{\ua/2}b^{-\ua} \phi(r)$ by ({\bf WS}).
	Therefore  for any $w\in Q_k$, 
	$$\Pp_w\left(\tau_{B(w,r_k/2)}\ge \frac{\rho t}{2d}\right)\ge\Pp_w\left(\tau_{B(w,2^{-d}r)}\geq\frac{\rho t}{2d}\right)\ge\Pp_w\left(\tau_{B(w,2^{-d}r)}\geq\frac{\rho\uC d^{\ua/2-1}}{2b^\ua} \phi(r)\right), 
	$$
	and by \autoref{p:exit}, we can select $ \rho$ small enough such that 
	\begin{equation}\label{brho3}
		\Pp_w\left(\tau_{B(w,r_k/2)}\ge \frac{\rho t}{2d}\right)\ge\frac{1}{2}.
	\end{equation}
	Let $\ol Q_k:=Q( \xi_{(k)},r_k/2)$. By  \eqref{brho3}, strong Markov property and L\'evy system in \eqref{eq:LSd}, for any $v\in Q_k$, we have that 
	\begin{align*}
		\Pp_v(X_{\rho t/d}\in Q_{k+1})\ge&\ 2^{-1}\Pp_v \left( X\text{ hits }\ol Q_{k+1} \text{ by time }\frac{\rho t}{2d}   \right)\\
		\ge &\ c_1\E_v\left[\int_0^{\frac{\rho t}{2d} \wedge \tau_{Q(v,r_k)}}\int_{\ol Q_{k+1}}\frac{1}{|X_s-u|\phi(|X_s-u|)}m(du)ds\right],
	\end{align*}
	where $m(du)$ is the measure on $\sum_{i=1}^d\bR$ restricted only on each coordinate.
	By the same discussion as \eqref{brho3},
	$$\E_v\left[\frac{\rho t}{2d} \wedge \tau_{Q(v,r_k)}\right]\ge \frac{\rho t}{2d}\Pp_v\left(\tau_{Q(v,r_k)}>\frac{\rho t}{2d}\right)\ge \frac{\rho}{4d}t.$$
	Suppose that $|x^{i_{k+1}}-y^{i_{k+1}}|\ge r_k$. Then for any $w\in Q(\xi_{(k)},2r_k)$ and $u\in Q( \xi_{(k+1)},2r_k)$, 	by \eqref{eq:rel},
	\begin{align*}
		|w^{i_{k+1}}-u^{i_{k+1}}|\leq&\ |x^{i_{k+1}}-y^{i_{k+1}}|+|x^{i_{k+1}}-w^{i_{k+1}}|+|y^{i_{k+1}}-u^{i_{k+1}}|\\
		=&\ |x^{i_{k+1}}-y^{i_{k+1}}|+| \xi_{(k)}^{i_{k+1}}-w^{i_{k+1}}|+| \xi_{(k+1)}^{i_{k+1}}-u^{i_{k+1}}|\leq 5|x^{i_{k+1}}-y^{i_{k+1}}|.
	\end{align*}
	Otherwise,  $|x^{i_{k+1}}-y^{i_{k+1}}|\le r_k$, $|w^{i_{k+1}}-u^{i_{k+1}}|\le 5r_k$. Therefore, for any $v\in Q_k$ and $k\in \{1,\dots, d-1\}$, 
	\begin{align}\label{eq:Qk}
		\Pp_v\left(X_{\rho t/d}\in Q_{k+1}\right)\ge&\ c_2\E_v\left[\frac{\rho t}{2d} \wedge \tau_{Q(v,r_k)}\right]\left(\frac{1}{t}\wedge \frac{\phi^{-1}(t)}{|x^{i_{k+1}}-y^{i_{k+1}}|\phi(|x^{i_{k+1}}-y^{i_{k+1}}|)}\right)\nn\\
		\ge &\ c_3\left(1\wedge \frac{t\phi^{-1}(t)}{|x^{i_{k+1}}-y^{i_{k+1}}|\phi(|x^{i_{k+1}}-y^{i_{k+1}}|)}\right).
	\end{align}
	In a similar way to obtain \eqref{eq:Qk}, we have that
	\begin{equation}\label{eq:Q1}
		\Pp_x\left(X_{\rho t/d}^U\in Q_1\right)\ge c_4\left( 1\wedge\frac{t\phi^{-1}(t)}{|x^{i_1}-y^{i_1}|\phi(|x^{i_1}-y^{i_1}|)}\right).
	\end{equation}	
	By choosing $b,\rho$ small enough  that \eqref{brho1}, \eqref{brho2} and \eqref{brho3} are satisfied,
	and by plugging \eqref{eq:Qk}--\eqref{eq:Q1} into \eqref{eq:pUlower2},  we arrive at our result by \eqref{eq:infpU} and \eqref{eq:pUlower2} with \eqref{eq:pUlower1}.
\end{proof}

\subsection{The lower bound} We now obtain the boundary decay using \autoref{L:exit} and \autoref{inftaulower}.
For  fixed $T>0$,  define
$$\wh a_T:=\wh a_{T,R}:=\frac{R}{\sqrt{1+\Lambda^2}\phi^{-1}(T)}\quad  \text{ and } \quad \wh r_t:=\wh a_T\phi^{-1}(t)\ \text{ for any }t\in[0,T]. $$
By \eqref{e:com},  there exists $C_0$ such that  
$\frac{V(\wh r_t)^2}{t}\le C_0$.
Using $C_{\ref{L:exit}.1}, C_{\ref{L:exit}.2}$ and $C_V$, let 
\begin{align}\label{d:nota}
	\lambda:=8\Lambda\sqrt{1+\Lambda^2}\vee \left(\frac{6C_{\ref{L:exit}.1} C_V}{C_0C_{\ref{L:exit}.2}}\right)^{2/\la}\quad  \text{ and } \quad \lambda_1:=2\lambda\sqrt{1+\Lambda^2}.
\end{align}

For any $x\in D$, let $Q_x\in\partial D$ such that $\delta_D(x)=|x-Q_x|$, and define sets
\begin{align}\label{d:E}
	U_x:=&D_{Q_x}( \wh r_t,\wh r_t)\setminus D_{Q_x}( \wh r_t,\lambda^{-1}\wh r_t)\nn\\
	E_x:=&\begin{cases}
		B(x,\lambda_1^{-1}\wh r_t),&\quad x\notin D_{Q_x}(\lambda^{-1}\wh r_t,\lambda^{-1}\wh r_t);\\
		\bigcup_{z\in U_x}B(z,\lambda_1^{-1}\wh r_t),&\quad x\in D_{Q_x}(\lambda^{-1}\wh r_t,\lambda^{-1}\wh r_t).
	\end{cases}
\end{align}

\begin{lemma}\label{L:intpD}
	For any $T>0$, there exists a constant $C_{\ref{L:intpD}}=C_{\ref{L:intpD}}(\phi, \kappa_0,\kappa_1, \eta, R, \Lambda, T, d)>0$ such that for any $t\le T$ and $x\in D$,
	\begin{equation*}
		\int_{E_x}p_D(t/3,x,z)dz\geq C_{\ref{L:intpD}}\Psi(t,x)
	\end{equation*}
	where $E_x$ is the set  defined in \eqref{d:E}.
\end{lemma}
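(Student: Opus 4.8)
The plan is to establish $\int_{E_x}p_D(t/3,x,z)\,dz\ge C_{\ref{L:intpD}}\Psi(t,x)$ by splitting according to whether $\delta_D(x)\ge\lambda^{-1}\wh r_t$ or not; this is exactly the dichotomy built into the definition \eqref{d:E} of $E_x$. Indeed, picking $Q_x\in\partial D$ with $\delta_D(x)=|x-Q_x|$ and using that $x$ lies on the inward normal at $Q_x$, we have $x=(\wt 0,\delta_D(x))$ in $CS_{Q_x}$, so $\rho_{Q_x}(x)=\delta_D(x)$ and hence $x\in D_{Q_x}(\lambda^{-1}\wh r_t,\lambda^{-1}\wh r_t)$ iff $\delta_D(x)<\lambda^{-1}\wh r_t$. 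I will freely use that $\wh r_t$, $\lambda^{-1}\wh r_t$, $\lambda_1^{-1}\wh r_t$, $\phi^{-1}(t/3)$ and $\phi^{-1}(u)$ for $u\in(t/6,t/3]$ are all comparable to $\phi^{-1}(t)$ with constants depending only on $\phi,R,\Lambda,T,d$ (by $({\bf WS})$), that $\wh r_t\le R/\sqrt{1+\Lambda^2}<R/2$, that $\lambda_1\ge2\lambda$, and that $\Psi(t,x)\asymp 1\wedge V(\delta_D(x))/V(\wh r_t)$ because $\sqrt{\phi(s)}\asymp V(s)$ by \eqref{e:com} and $V(\wh r_t)^2\asymp t$; throughout, $\int_A p_D(s,x,z)\,dz=\Pp_x(s<\tau_D;\,X_s\in A)$ by \autoref{CZlemma}.

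\emph{Case $\delta_D(x)\ge\lambda^{-1}\wh r_t$.} Here $E_x=B(x,\lambda_1^{-1}\wh r_t)$ and $\phi(\delta_D(x))\gtrsim t$, so $\Psi(t,x)\asymp1$ and it suffices to show $\int_{E_x}p_D(t/3,x,z)\,dz\ge c>0$. Since $\lambda_1\ge2\lambda$, every $z\in B(x,\lambda_1^{-1}\wh r_t)$ has $\delta_D(z)\ge\delta_D(x)-\lambda_1^{-1}\wh r_t\ge(2\lambda)^{-1}\wh r_t$; in particular $B(x,\lambda_1^{-1}\wh r_t)\subset D$. Fixing $\zeta_1:=2\lambda/\wh a_T$ and feeding it to \autoref{prp:lowersec2} with $U=D$ yields $\zeta_2,\zeta_3>0$ with $p_D(t/3,x,z)\ge\zeta_3[\phi^{-1}(t/3)]^{-d}$ whenever $\delta_D(x)\wedge\delta_D(z)\ge\zeta_1^{-1}\phi^{-1}(t/3)$ and $|x-z|<\zeta_2\phi^{-1}(t/3)$; both hold for $z$ in a ball $B(x,r^\ast)\subset E_x$ with $r^\ast:=\tfrac12(\lambda_1^{-1}\wh r_t\wedge\zeta_2\phi^{-1}(t/3))\asymp\phi^{-1}(t/3)$, so integrating over $B(x,r^\ast)$ gives $\int_{E_x}p_D(t/3,x,z)\,dz\ge\zeta_3[\phi^{-1}(t/3)]^{-d}|B(x,r^\ast)|\ge c>0$.

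\emph{Case $\delta_D(x)<\lambda^{-1}\wh r_t$.} Now $E_x=\bigcup_{z\in U_x}B(z,\lambda_1^{-1}\wh r_t)$ with $U_x=D_{Q_x}(\wh r_t,\wh r_t)\setminus D_{Q_x}(\wh r_t,\lambda^{-1}\wh r_t)$, and $\Psi(t,x)\asymp V(\delta_D(x))/V(\wh r_t)\in(0,1)$. Put $U:=D_{Q_x}(\lambda^{-1}\wh r_t,\lambda^{-1}\wh r_t)\ni x$. Step (i), \emph{restart from $U_x$:} any $w\in U_x$ has $|\wt w|<\wh r_t$ and $\rho_{Q_x}(w)\ge\lambda^{-1}\wh r_t$, hence $\delta_D(w)\ge\rho_{Q_x}(w)/\sqrt{1+\Lambda^2}\ge2\wh r_t/\lambda_1$, so $B(w,\lambda_1^{-1}\wh r_t)\subset D$ and $\subset E_x$, and every $z\in B(w,\lambda_1^{-1}\wh r_t)$ has $\delta_D(z)\ge\lambda_1^{-1}\wh r_t$; applying \autoref{prp:lowersec2} (with $U=D$, $\zeta_1:=\lambda_1/\wh a_T$) and integrating its pointwise bound over a ball of radius $\asymp\phi^{-1}(u)$ gives, for every $w\in U_x$ and $u\in(t/6,t/3]$,
\[
\int_{B(w,\lambda_1^{-1}\wh r_t)}p_D(u,w,z)\,dz=\Pp_w\big(u<\tau_D;\,X_u\in B(w,\lambda_1^{-1}\wh r_t)\big)\ge c_1>0.
\]
Step (ii), \emph{reach $U_x$ quickly:} since $\lambda\ge4\Lambda$, $\wh r_t\le R/2$ and $x\in U$, \autoref{L:exit}(2) (with $Q=Q_x$, $s=\wh r_t$) gives $\Pp_x(X_{\tau_U}\in U_x)\ge C_{\ref{L:exit}.2}V(\delta_D(x))/V(\wh r_t)$; and \autoref{L:exit}(1) applied to $U=D_{Q_x}(\lambda^{-1}\wh r_t,\lambda^{-1}\wh r_t)$ together with Markov's inequality gives $\Pp_x(\tau_U\ge t/6)\le\tfrac6t\E_x[\tau_U]\le\tfrac6t C_{\ref{L:exit}.1}V(\lambda^{-1}\wh r_t)V(\delta_D(x))$, which by \eqref{V:sc}, $V(\wh r_t)^2/t\le C_0$ and the choice \eqref{d:nota} of $\lambda$ is at most $\tfrac12 C_{\ref{L:exit}.2}V(\delta_D(x))/V(\wh r_t)$; hence $\Pp_x(\tau_U<t/6;\,X_{\tau_U}\in U_x)\ge\tfrac12 C_{\ref{L:exit}.2}V(\delta_D(x))/V(\wh r_t)$. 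Since $X_{\tau_U}\in U_x\subset D$ forces $\tau_D>\tau_U$, and on $\{\tau_U<t/6\}$ the residual time $t/3-\tau_U\in(t/6,t/3]$, the strong Markov property at $\tau_U$ combined with step (i) yields
\[
\int_{E_x}p_D(t/3,x,z)\,dz=\Pp_x\big(t/3<\tau_D;\,X_{t/3}\in E_x\big)\ge c_1\,\Pp_x\big(\tau_U<t/6;\,X_{\tau_U}\in U_x\big)\ge\tfrac{c_1}{2}C_{\ref{L:exit}.2}\frac{V(\delta_D(x))}{V(\wh r_t)}\ge c\,\Psi(t,x).
\]

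The main obstacle is the error estimate in step (ii): one must verify that $\tfrac6t C_{\ref{L:exit}.1}V(\lambda^{-1}\wh r_t)V(\delta_D(x))$ is genuinely absorbed into half of the main term $C_{\ref{L:exit}.2}V(\delta_D(x))/V(\wh r_t)$. This hinges on the precise magnitude of $\lambda$: the scaling \eqref{V:sc} makes $V(\lambda^{-1}\wh r_t)\le C_V\lambda^{-\la/2}V(\wh r_t)$ small for large $\lambda$, while $V(\wh r_t)^2/t\le C_0$ turns the factor $V(\wh r_t)/t$ into $C_0/V(\wh r_t)$, so one needs $\lambda$ at least of the size fixed in \eqref{d:nota} (and, for \autoref{L:exit}(1) to apply to $D_{Q_x}(\lambda^{-1}\wh r_t,\lambda^{-1}\wh r_t)$, also large enough that $\lambda^{-1}\wh r_t\le R/(2\lambda_0)$, which we may assume after enlarging $\lambda$ if needed). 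The remaining points — membership of the relevant points in $D$ and in the sets $D_{Q_x}(\cdot,\cdot)$, and comparability of all radii with $\phi^{-1}(t)$ — are routine $C^{1,1}$ geometry together with $({\bf WS})$ and \eqref{e:com}.
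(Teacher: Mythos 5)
Your argument is correct and is essentially the paper's proof: the same dichotomy built into \eqref{d:E}, the interior case disposed of by a uniform survival bound, and the boundary case by hitting $U_x$ quickly (the exit-distribution estimate of \autoref{L:exit}(2) minus a Markov-inequality error controlled by \autoref{L:exit}(1) and the choice of $\lambda$ in \eqref{d:nota}) and then surviving inside a ball contained in $E_x$ via the strong Markov property. The only cosmetic differences are that you integrate the near-diagonal bound of \autoref{prp:lowersec2} where the paper invokes \autoref{inftaulower}, and that your threshold $\tau_U<t/6$ produces a factor $6/t$ instead of the paper's $3/t$, which is why you hedge about enlarging $\lambda$; that wrinkle disappears if you instead require the post-hitting leg to satisfy $\tau_{E_x}\circ\theta_{\tau_U}>t/3$ (as the paper does) or note that your step (i) bound in fact holds for every residual time $u\in(0,t/3]$, so $\tau_U<t/3$ suffices.
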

\begin{proof}\hspace{-.05in}.
	By the definition of $(R, \Lambda)$, we may assume that $\Lambda>2$.
	So for fixed $T>0$, 
	$$\wh a_T:=\wh a_{T,R}:=\frac{R}{\sqrt{1+\Lambda^2}\phi^{-1}(T)}\le \frac{R}{2\phi^{-1}(T)},$$
	and $\wh r_t<{R}/{2}$ for  $t\le T$. 
	If $x\notin D_{Q_x}(\lambda^{-1}\wh r_t,\lambda^{-1}\wh r_t)$, then $\delta_D(x)\ge \lambda^{-1}\wh r_t$. Let $B_x:=B(x,\lambda_1^{-1}\wh r_t)$, then by \autoref{inftaulower}, 
	$$\int_{E_x}p_D(t/3,x,z)dz\ge\int_{B_x}p_{B_x}(t/3,x,z)dz=\Pp_x(\tau_{B_x}>t/3)\ge c_1.$$
	If $x\in D_{Q_x}(\lambda^{-1}\wh r_t,\lambda^{-1}\wh r_t)$, that is, $\delta_D(x)< \lambda^{-1}\wh r_t$,  Markov property implies that 
	\begin{align}\label{eq:xnotinDQ}
		\int_{E_x}p_D(t/3,x,z)dz= \		 \Pp_x(X^D_{t/3}\in E_x)
		\geq&\ \Pp_x\left(\sigma^D_{U_x}<t/3,\tau_{E_x}\circ\theta_{\sigma^D_{U_x}}>t/3\right)\nn\\
		\ge&\inf_{z\in U_x}\Pp_z\left(\tau_{E_x}>t/3\right)\Pp_x\left(\sigma^D_{U_x}<t/3\right)
	\end{align}
	where $\sigma^D_{U_x}:=\inf\{s\ge 0: X_s^D\in U_x\}$ is the first hitting time of $U_x$.
	For any $z\in U_x$, $\delta_D(z)\ge \lambda^{-1}\wh r_t$ and $ B_z:=B(z,\lambda_1^{-1}\wh r_t)\subset E_x\subset D$, so  \autoref{inftaulower} implies that
	\begin{equation}\label{eq:tauEx}
		\inf_{z\in U_x}\Pp_z\left(\tau_{E_x}>t/3\right)\ge \inf_{z\in U_x}\Pp_z\left(\tau_{B_z}>t/3\right)\ge c_2.
	\end{equation}
	
	Applying \autoref{L:exit} with $s=\lambda^{-1}\wh r_t$ and  with $s=\wh r_t$ for $\delta_D(x)\le \lambda^{-1}\wh r_t$, respectively, we obtain 
	$$\E_x[\tau_{D_{Q_x}(\lambda^{-1}\wh r_t,\lambda^{-1}\wh r_t)}]
	\le C_{\ref{L:exit}.1}{V(\lambda^{-1}\wh r_t)V(\delta_D(x))}\le C_{\ref{L:exit}.1}C_V \lambda^{-\la/2}{V(\wh r_t)V(\delta_D(x))}$$
	$$\Pp_x\left(X_{\tau_{D_{Q_x}(\lambda^{-1}\wh r_t,\lambda^{-1}\wh r_t)}}\in D_{Q_x}( \wh r_t,\wh r_t)\setminus D_{Q_x}( \wh r_t,\lambda^{-1}\wh r_t)\right)\ge  C_{\ref{L:exit}.2}\frac{V(\delta_D(x))}{V(\wh r_t)}.\qquad $$
	Therefore,
	\begin{align*}
		&\Pp_x\left(\sigma^D_{U_x}<t/3\right)\ge\ \Pp_x\left(\tau_{D_{Q_x}(\lambda^{-1}\wh r_t, \ \lambda^{-1}\wh r_t)}<t/3,X^D_{\tau_{D_{Q_x}(\lambda^{-1}\wh r_t,\lambda^{-1}\wh r_t)}}\in U_x\right)\nn\\
		=&\ \Pp_x\left(X_{\tau_{D_{Q_x}(\lambda^{-1}\wh r_t,\lambda^{-1}\wh r_t)}}\in D_{Q_x}( \wh r_t,\wh r_t)\setminus D_{Q_x}( \wh r_t,\lambda^{-1}\wh r_t)\right)\nn\\
		&-\Pp_x\left(X_{\tau_{D_{Q_x}(\lambda^{-1}\wh r_t,\lambda^{-1}\wh r_t)}}\in D_{Q_x}( \wh r_t,\wh r_t)\setminus D_{Q_x}( \wh r_t,\lambda^{-1}\wh r_t),\ \tau_{D_{Q_x}(\lambda^{-1}\wh r_t,\lambda^{-1}\wh r_t)}\ge t/3\right)\nn\\
		\ge&\ \Pp_x\left(X_{\tau_{D_{Q_x}(\lambda^{-1}\wh r_t,\lambda^{-1}\wh r_t)}}\in D_{Q_x}( \wh r_t,\wh r_t)\setminus D_{Q_x}( \wh r_t,\lambda^{-1}\wh r_t)\right)-\Pp_x\left(\tau_{D_{Q_x}(\lambda^{-1}\wh r_t,\lambda^{-1}\wh r_t)}\ge t/3\right)\nn\\
		\ge&\ C_{\ref{L:exit}.2}\frac{V(\delta_D(x))}{V(\wh r_t)}-\frac{\E_x[\tau_{D_{Q_x}(\lambda^{-1}\wh r_t,\lambda^{-1}\wh r_t)}]}{t/3}
		\ge \ C_{\ref{L:exit}.2}\frac{V(\delta_D(x))}{V(\wh r_t)}- \frac{3C_{\ref{L:exit}.1}C_V}{\lambda^{\la/2}}\frac{V(\wh r_t)V(\delta_D(x))}{t}\nn\\
		=&\ \left({C_{\ref{L:exit}.2}}-\frac{3C_{\ref{L:exit}.1}C_V}{\lambda^{\la/2}}\frac{V(\wh r_t)^2}{t}\right)\frac{V(\delta_D(x))}{V(\wh r_t)}\ge \frac{C_{\ref{L:exit}.2}}{2}\frac{V(\delta_D(x))}{V(\wh r_t)}.
	\end{align*}
	The last inequality holds by \eqref{d:nota}.
	Since $V(\delta_D(x))\asymp \sqrt{\phi(\delta_D(x))}$ and $V(\wh r_t)\asymp \sqrt t$ by \eqref{e:com}, we conclude that
	\begin{equation}\label{eq:sigmaUx}
		\Pp_x(\sigma_{U_x}^D<t/3)\ge c_3\sqrt{\frac{\phi(\delta_D(x))}{t}}.
	\end{equation}
	Therefore, we arrive our result by plugging \eqref{eq:tauEx} and \eqref{eq:sigmaUx} into \eqref{eq:xnotinDQ}.
\end{proof}

\vspace{.1in}

Now we are ready to prove \autoref{t:nmain2}(1).
\vspace{.1in}

\begin{proof}{\it of \autoref{t:nmain2}(1).}
	By the semigroup property and \autoref{L:intpD}, for any $x,y\in D$, 
	\begin{align}\label{eq:pDlower}
		p_D(t,x,y)=&\int_{D\times D}p_D(t/3,x,u)p_D(t/3,u,v)p_D(t/3,v,y)dudv\nn\\
		\ge&\int_{E_x\times E_y}p_D(t/3,x,u)p_D(t/3,u,v)p_D(t/3,v,y)dudv\nn\\
		\ge&\inf_{u\in E_x,v\in E_y}p_D(t/3,u,v)\int_{E_x}p_D(t/3,x,u)du\cdot\int_{E_y}p_D(t/3,v,y)dv\nn\\
		\ge&\ C_{\ref{L:intpD}}^2\Psi(t,x)\Psi(t,y)\inf_{u\in E_x,v\in E_y}p_D(t/3,u,v)
	\end{align}
	where $E_x, E_y$ are defined in \eqref{d:E}.
	Note that for $(u,v)\in E_x\times E_y$, $|x^i-u^i|\vee|y^i-v^i|\leq 3\wh r_t$ for all $i\in\{1,\dots,d\}$. If $|x^i-y^i|\le 3 \wh r_t$, then $|u^i-v^i|\le |x^i-u^i|+|x^i-y^i|+|y^i-v^i|\le 9 \wh r_t$.
	Using ({\bf WS}), $\phi(|u^i-v^i|)\le \phi( 9\wh a_T\phi^{-1}(t))\asymp \phi(\phi^{-1}(t))=t$, and so
	\begin{equation}\label{eq:uvi1}
		|u^i-v^i|\phi(|u^i-v^i|)\le c_1 t\phi^{-1}(t).
	\end{equation}
	If $|x^i-y^i|> 3{\wh r_t}$, then $|u^i-v^i|\leq 3|x^i-y^i|$, and
	\begin{equation}\label{eq:uvi2}
		|u^i-v^i|\phi(|u^i-v^i|)\le c_2|x^i-y^i|\phi(|x^i-y^i|).
	\end{equation}
	Recall that  $\rho_{Q}(w):=w^{d}-\varphi_Q(\wt{w})$ for $w\in D$. Then by \eqref{comp:rho}  and  the fact that  $\lambda_1\ge 2\lambda$, for any $(u,v)\in E_x\times E_y$,
	$$\delta_D(u)\wedge \delta_D(v)\ge  \frac{\rho_{Q_x}(u)\wedge \rho_{Q_y}(v)}{\sqrt{1+\Lambda^2}}
	\ge \frac{ (\lambda^{-1}-\lambda_1^{-1})\wh r_t}{\sqrt{1+\Lambda^2}}
	\ge \frac{\wh a_T}{2\lambda\sqrt{1+\Lambda^2}}\phi^{-1}(t).$$
	Applying \autoref{prp:lower1} with $a= \frac{\wh a_T}{2\lambda\sqrt{1+\Lambda^2}}$, and then combining \eqref{eq:hke} with \eqref{eq:uvi1}--\eqref{eq:uvi2}, we obtain the lower bound of the last part in \eqref{eq:pDlower} as follows:
	\begin{align*}
		\inf_{u\in E_x,v\in E_y}p_D(t/3,u,v)\ge&\ C_{\ref{prp:lower1}}\inf_{u\in E_x,v\in E_y}p(t/3,u,v)\\
		\ge&\ C_{\ref{prp:lower1}}C_1^{-1}[\phi^{-1}(t)]^{-d}\prod_{i=1}^d\left(1\wedge \frac{t\phi^{-1}(t)}{|u^i-v^i|\phi(|u^i-v^i|)}\right)\\
		\ge&\ \frac{C_{\ref{prp:lower1}}}{C_1(c_1\vee c_2)^{d}}[\phi^{-1}(t)]^{-d}\prod_{i=1}^d\left(1\wedge \frac{t\phi^{-1}(t)}{|x^i-y^i|\phi(|x^i-y^i|)}\right)\ge  c_3p(t ,x,y).
	\end{align*}
\end{proof}

\section{Large time Dirichlet heat kernel estimates}\label{sec:largeDHKE}

In this section, we assume $D$ is a bounded $C^{1,1}$ open set satisfying condition $(\mathbf D_\gamma)$ for some $\gamma\in(0,1]$, and the jumping kernel $J$ satisfies \eqref{a:J}, $\bf{(K_\eta), (SD)}$. We aim to prove \autoref{t:nmain2}(2) by using the on-diagonal  estimate for large time in \autoref{eq:Ondia}.
\begin{lemma}\label{eq:Ondia}
	Suppose $D\subset \bR^d$ is a bounded open set.
	Then there are two positive constants $c_i=c_i(\phi, \text{diam}(D), d), i=1,2$ such that 
	$$p_D(t, x, y)\le c_1 e^{-c_2t} , \qquad\text{ for }(t,x,y)\in(1,\infty)\times D\times D.$$	
\end{lemma}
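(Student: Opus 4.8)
The plan is to reduce the large-time on-diagonal bound to the estimate \eqref{eq:hke} for the free heat kernel together with a standard spectral/semigroup argument. First I would bound $p_D$ by the free kernel $p$ on the diagonal: since $p_D(t,x,y)\le p(t,x,y)$ and $D$ is bounded with diameter $\text{diam}(D)$, the estimate \eqref{eq:hke} gives for all $t\ge 1$ a uniform bound $\sup_{x,y\in D}p_D(t,x,y)\le C_1[\phi^{-1}(t)]^{-d}$; by ({\bf WS}), $\phi^{-1}(t)\to\infty$, so in particular $p_D(1,x,y)\le M$ for some $M=M(\phi,d)$ (using $[\phi^{-1}(1)]^{-d}$). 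This gives the starting point $\|p_D(1,\cdot,\cdot)\|_\infty\le M<\infty$.

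Next I would exploit the semigroup and symmetry properties from \autoref{CZlemma}. For $t>1$ write $t=1+(t-1)$ and iterate: by Chapman--Kolmogorov, for integer $n\ge 1$,
\begin{align*}
p_D(n,x,y)=\int_D p_D(1,x,z)p_D(n-1,z,y)\,dz.
\end{align*}
Set $u_n:=\sup_{x,y\in D}p_D(n,x,y)$. Using the $L^2$-contraction structure one shows $u_{n}$ decays geometrically. More cleanly: by \autoref{CZlemma} the operators $P_t^D$ form a symmetric strongly continuous semigroup on $L^2(D)$, and the bound $\|p_D(1,\cdot,\cdot)\|_\infty\le M$ shows $P_1^D$ is Hilbert--Schmidt, hence the generator $-\cL$ on $D$ has discrete spectrum $0<\lambda^D=\lambda_1\le\lambda_2\le\cdots$ with $\lambda_n\to\infty$. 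Then for $t\ge 1$,
\begin{align*}
p_D(t,x,y)&=\int_D p_D\big(\tfrac12,x,z\big)p_D\big(\tfrac12,z,y\big)...
\end{align*}
wait — better to use: $p_D(t,x,y)=\sum_n e^{-\lambda_n t}\varphi_n(x)\varphi_n(y)$ where $\{\varphi_n\}$ is an orthonormal basis of eigenfunctions; then for $t\ge 1$,
\begin{align*}
p_D(t,x,y)&=\int_D\int_D p_D\big(\tfrac14,x,u\big)p_D(t-\tfrac12,u,v)p_D\big(\tfrac14,v,y\big)\,du\,dv,
\end{align*}
and since $\|p_D(t-\tfrac12,\cdot,\cdot)\|_{L^2(D\times D)}^2=\sum_n e^{-2\lambda_n(t-1/2)}\le e^{-2\lambda_1(t-1/2)}\sum_n e^{-2\lambda_n/2}\cdot$(adjust) $=c\,e^{-2\lambda_1(t-1/2)}$ using the Hilbert--Schmidt bound at time $1$, Cauchy--Schwarz applied in $(u,v)$ together with $\|p_D(\tfrac14,x,\cdot)\|_2\le \sqrt{p_D(\tfrac12,x,x)}\le\sqrt{M'}$ (bounded by the case-$t\ge1$ argument applied at $t=\tfrac12$, or by enlarging the starting time) yields $p_D(t,x,y)\le c_1 e^{-\lambda_1(t-1/2)}\le c_1' e^{-c_2 t}$ with $c_2=\lambda_1>0$.

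To make the constants depend only on $\phi,\text{diam}(D),d$ as claimed, I would note that the $L^\infty$ bound on $p_D$ at any fixed time comes purely from \eqref{eq:hke} and boundedness of $D$, and that $\lambda^D\ge c_2>0$ follows from the fact that $X^D$ does not live forever — quantitatively, $\lambda^D=-\lim_{t\to\infty}\tfrac1t\log\|P_t^D\|_{2\to2}$ and $\|P_t^D\|_{2\to2}\le\|P_t^D\|_{1\to\infty}|D|\le C_1[\phi^{-1}(t)]^{-d}|D|<1$ for $t$ large, giving a lower bound on $\lambda^D$ in terms of $\phi,d,|D|$ alone. The main obstacle I anticipate is bookkeeping: ensuring the $L^2$-eigenfunction expansion is justified (needs Hilbert--Schmidtness of $P_s^D$ for some $s>0$, which follows from the $L^\infty$ bound on $p_D(s,\cdot,\cdot)$ and $|D|<\infty$), and tracking that every constant depends only on the stated parameters rather than on finer geometry of $D$; the actual inequalities are routine once the spectral picture is in place.
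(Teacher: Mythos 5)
Your proposal is correct in substance, but it takes a genuinely different route from the paper. The paper's proof is elementary and probabilistic: it uses the \emph{lower} bound in \eqref{eq:hke} (reduced to the one--dimensional kernel $p^\phi$) to show that $\Pp_x(X_1\in D^c)\ge c_4>0$ uniformly in $x\in D$, hence $\sup_{x\in D}\Pp_x(\tau_D>1)\le c_5<1$; iterating the Markov/semigroup property over unit time steps gives $\int_D p_D(t-1,x,z)\,dz\le c_5^{\lfloor t\rfloor -1}$, and one application of the on--diagonal upper bound at time $1$ converts this into $p_D(t,x,y)\le c_1e^{-c_2t}$. You instead argue spectrally: $p_D\le p$ and \eqref{eq:hke} give ultracontractivity, so $\|P_t^D\|_{2\to2}\le C_1[\phi^{-1}(t)]^{-d}|D|$, which is $<1$ for some $t_0=t_0(\phi,d,\mathrm{diam}(D))$ since $\phi^{-1}(t)\to\infty$ by ({\bf WS}); as $P^D$ is a symmetric semigroup, $\|P_t^D\|_{2\to2}=e^{-\lambda^D t}$, so $\lambda^D\ge t_0^{-1}\log\big(1/(C_1[\phi^{-1}(t_0)]^{-d}|D|)\big)>0$ quantitatively, and the factorization $p_D(t,x,y)=\int\!\!\int p_D(\tfrac14,x,u)p_D(t-\tfrac12,u,v)p_D(\tfrac14,v,y)\,du\,dv$ with Cauchy--Schwarz and $\|p_D(\tfrac14,x,\cdot)\|_2^2=p_D(\tfrac12,x,x)\le C_1[\phi^{-1}(\tfrac12)]^{-d}$ yields the exponential bound; all constants depend only on $\phi,d,\mathrm{diam}(D)$ because $|D|\le c_d\,\mathrm{diam}(D)^d$. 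Your route uses only the upper bound of \eqref{eq:hke} (the paper's needs the lower bound for the escape probability) and anticipates the spectral picture the paper only sets up later in the proof of \autoref{t:nmain2}(2), at the cost of invoking Hilbert--Schmidt/spectral machinery where the paper stays elementary. Two small bookkeeping points to finish your version: in bounding $\sum_n e^{-2\lambda_n(t-1/2)}$ you must peel off a fixed time (e.g.\ write $e^{-2\lambda_n(t-1/2)}\le e^{-2\lambda_1(t-1)}e^{-\lambda_n}$ and use $\sum_n e^{-\lambda_n}=\int_D p_D(1,x,x)\,dx<\infty$), exactly the "adjust" you flagged; alternatively, bound the middle factor by the operator norm $e^{-\lambda^D(t-1/2)}$ and avoid the trace altogether. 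Also note the symmetry of $p_D$ used in $\|p_D(\tfrac14,x,\cdot)\|_2^2=p_D(\tfrac12,x,x)$ holds a.e.\ by \autoref{CZlemma}, which suffices here.
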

\begin{proof}\hspace{-.05in}.
	By the semigroup property and \eqref{eq:hke}, for all $(t,x,y)\in(1,\infty)\times D\times D$, 
	\begin{align}\label{eq:t}
		p_D(t, x, y)=\int_D p_D(t-1, x, z) p_D(1, z, y) dz\le 	C_1[\phi^{-1}(1)]^{-d}\int_D p_D(t-1, x, z)dz 
	\end{align}
	and for $t\in (n , n+1]\cap (2, \infty)$, there is $n\in \bN$ and 
	\begin{align}\label{eq:t-1}
		\int_D p_D(t-1, x, z)dz= \int_{D^{n+1}} p_D(1, x, x_1)&\cdots  p_D(1, x_{n-1},  x_n)\nn\\
		&  p_D(t-n, x_n, z)\ dz\ dx_n\cdots dx_1.
	\end{align}
	Let $d_0:=\text{diam}(D)$ and $p^\phi(1, x^1, y^1)$ be the transition density for $X^1$.
	Since $\{|z|\ge d_0\}\supset \{z=(z^1, \ldots, z^d): |z^1|\ge d_0\vee 1\}$, by \eqref{eq:hke} and ({\bf WS}), for $x\in D$,
	\begin{align*}
		\bP_x(\tau_D\le 1)\ge &\ \bP_x(X_1\in D^c)=\int_{D^c} p(1, x, z)dz\ge\int_{\{|z|\ge d_0\}} p(1, 0, z)dz\\
		\ge & \ \int_{\{|z^1|\ge d_0\}} p^\phi(1, 0, z^1) dz^1=2\int_{d_0}^\infty\frac{ds}{s\phi(s)} =:c_4(\phi, d_0)
	\end{align*}
	so that 
	$$\sup_{x\in D}\int_D p_D(1, x, y)dy=\sup_{x\in D}\bP_x(\tau_D>1)\le (1- c_4)=:c_5<1$$
	and  \eqref{eq:t-1} is bounded as follows: $t\in (n , n+1]\cap (2, \infty)$
	\begin{align*}
		\int_D p_D(t-1, x, z)dz\le c_5^n \int_{D} p_D(t-n-1, x_n, z) dz\le c_5^n\le c_5^{-1}\cdot c_5^t=c_5^{-1}\cdot e^{-t\ln c_5^{-1}}.
	\end{align*}
	Clearly, 
	$$\int_D p_D(t-1, x, z) dz\le 1\le e\cdot e^{-t}, \qquad \text{ for }t\in (1, 2).$$
	By letting $c_6:=\ln(c_5^{-1}\wedge e)$, we have that
	$$\int_D p_D(t-1, x, z) dz\le e\cdot e^{-t}\vee c_5^{-1}\cdot e^{-t\ln c_5^{-1}}\le (e\vee c_5^{-1}) e^{-c_6 t}, \quad \text{ for } (t, x)\in (1, \infty)\times D.$$
	Therefore, we obtain our result plugging the above in \eqref{eq:t}
	with $c_1:=c_3(e\vee c_5^{-1})$ and $c_2=c_4$.
	
\end{proof}

\medskip

Now we are ready to prove \autoref{t:nmain2}(2).

\medskip
{\it Proof of \autoref{t:nmain2}(2).}
Let $D$ be bounded and $\cL_D$ be the infinitesimal generator of the semigroup $\{P_t^D\}$ on $L^2(D)$. 
For each $t>0$, by \eqref{eq:dhke_u},  the heat kernel $p_D(t, x, y)$ is bounded in $D\times D$ and so $\cL_D$ is a Hilbert-Schmidt operator which is a compact operator. Therefore there exist the first eigenvalue $\lambda^D:=-\sup(\sigma(\cL_D))$ of multiplicity $1$ and the corresponding eigenfunction $f_D$ with unit $L^2$ norm, $\|f_D\|_{L^2(D)}=1$.
Since $\{P_t^D\}$ is the semigroup corresponding to $\cL_D$, 
\begin{align}\label{eq:ev}
	f_D(x)=e^{t \lambda^D }P_t^D f_D(x)=e^{t \lambda^D }\int_D p_D(t, x, y) f_D(y)dy.	
\end{align}
We obtain the upper bound of \eqref{eq:ev} when $t=\frac{1}{4}$ using \eqref{eq:hke}, \eqref{eq:dhke_u} with $T=1$ and the H\"older inequality that for all $x\in D$
\begin{align}\label{eq:l1}
	f_D(x)\le &\ c e^{\frac{1}{4}{\lambda^D }}\left(1\wedge 2\sqrt{\phi(\delta_D(x))}\right)\int_D p(\tfrac{1}{4}, x, y) f_D(y)dy\nn\\
	\le &\ 2c e^{\frac{1}{4}{\lambda^D }}\left(1\wedge \sqrt{\phi(\delta_D(x))}\right)\sqrt{\int_D p(\tfrac{1}{4}, x, y)^2 dy}\cdot \|f_D\|_{L^2(D)}\nn\\
	\le &\ 2ce^{\frac{1}{4}{\lambda^D }}\left(1\wedge \sqrt{\phi(\delta_D(x))}\right)\sqrt{p(\tfrac{1}{2}, x, x)}\nn\\
	\le &\ 2ce^{\frac{1}{4}{\lambda^D }}\left(1\wedge \sqrt{\phi(\delta_D(x))}\right)\left[\phi^{-1}(2^{-1})\right]^{-d/2}
	=: c^*e^{\frac{1}{4}{\lambda^D }}\left(1\wedge \sqrt{\phi(\delta_D(x))}\right).
\end{align}
for some $c^*=c^*(\kappa_0, \kappa_1, \eta,  \phi, R, \Lambda, \gamma, d)>0$.
On the other hand, if $t=1$ in \eqref{eq:ev}, \eqref{eq:hke} and \eqref{eq:dhke_l} with $T=1$ yields that
for all $x\in D$
\begin{align}\label{eq:l1_1}
	f_D(x)\ge &\ c e^{{\lambda^D }}\left(1\wedge \sqrt{\phi(\delta_D(x))}\right)\int_D\left(1\wedge \sqrt{\phi(\delta_D(y))}\right) p(1, x, y) f_D(y)dy\nn\\
	\ge &\ c e^{{\lambda^D }}\left(1\wedge \sqrt{\phi(\delta_D(x))}\right)\left(1\wedge \frac{1}{d_0^d\phi(d_0)^d}\right)\int_D \left(1\wedge \sqrt{\phi(\delta_D(y))}\right) f_D(y)dy\nn\\
	=: & \ c_*e^{{\lambda^D }}\left(1\wedge \sqrt{\phi(\delta_D(x))}\right)\int_D \left(1\wedge \sqrt{\phi(\delta_D(y))}\right) f_D(y)dy
\end{align}
for some $c_*=c_*(\kappa_0, \kappa_1, \eta,  \phi, R, \Lambda, \gamma, d_0, d)>0$  where $d_0:=\text{diam}(D)$.
Combining theses two inequalities,  since $\|f_D\|_{L^2(D)}=1$, we have that for any $x\in D$,
\begin{align}\label{eq:l2}
	f_D(x)\ge \frac{c_*}{c^*}e^{\tfrac{3}{4}{\lambda^D }}\left(1\wedge \sqrt{\phi(\delta_D(x))}\right)\int_D  f_D(y)^2dy= \frac{c_*}{c^*}e^{\tfrac{3}{4}{\lambda^D }}\left(1\wedge \sqrt{\phi(\delta_D(x))}\right).
\end{align}
By \eqref{eq:l1} and \eqref{eq:l2}, we have that 
\begin{align}\label{eq:lL}
	c_*(c^*)^{-1}e^{\tfrac{3}{4}{\lambda^D }}\le c^*e^{\frac{1}{4}{\lambda^D }}
	\quad \text{ which implies }\quad\lambda^D \le 2\ln((c^*)^2/c_*) <\infty,
\end{align}
and this prove the upper bound of \eqref{eq:La1}.
By \eqref{eq:l1}--\eqref{eq:l1_1} and \eqref{eq:lL}, for $x\in D$,
\begin{align}\label{eq:l3}
	c_0^{-1}\left(1\wedge \sqrt{\phi(\delta_D(x))}\right)\le f_D(x)\le c_0\left(1\wedge \sqrt{\phi(\delta_D(x))}\right)
\end{align}
for some $c_0=c_0(\kappa_0, \kappa_1, \eta,  \phi, R, \Lambda, \gamma, d_0, d)\ge 1$.
From \eqref{eq:ev}, 
$$1=\|f_D\|_{L^2(D)}^2=e^{t\lambda^D}\int_{D\times D} f_D(x) p_D(t, x, y)f_D(y)dxdy$$
and so that
$$\int_{D\times D} f_D(x) p_D(t, x, y)f_D(y)dxdy=e^{-t\lambda^D}.$$
Therefore \eqref{eq:l3} implies that for $t>0$,  
\begin{align}\label{eq:l6}
	c_0^{-2}e^{-t\lambda^D}
	\le \int_{D\times D} \left(1\wedge \sqrt{\phi(\delta_D(x))}\right) p_D(t, x, y)\left(1\wedge \sqrt{\phi(\delta_D(y))}\right)dxdy
	\le c_0^{2}e^{-t\lambda^D}.
\end{align}
For the lower bound of \eqref{eq:La1}, by \autoref{eq:Ondia}, there exists $c_i=c_i(\phi, d_0, d)>0$, $i=1,2$ such that 
$$p_D(t, x, y)\le c_1 e^{-c_2 t}, \qquad  \text{ for }(t, x, y)\in (0, \infty)\times D\times D.$$
Combining this with \eqref{eq:l6}, for all $t>0$,
\begin{align*}
	c_0^{-2}e^{-t\lambda^D}\le &\int_{D\times D} \left(1\wedge \sqrt{\phi(\delta_D(x))}\right) p_D(t, x, y)\left(1\wedge \sqrt{\phi(\delta_D(y))}\right)dxdy\\
	\le &\ c_1e^{-c_2t}\int_{D\times D} \left(1\wedge \sqrt{\phi(\delta_D(x))}\right) \left(1\wedge \sqrt{\phi(\delta_D(y))}\right)dxdy	\le  c_1e^{-c_2t}|D|^2.
\end{align*}
Therefore, we have 
$$e^{(c_2-\lambda^D )t}\le (c_0)^2c_1|D|^2<\infty\quad\text{ which implies } \quad\lambda^D\ge c_2.$$
Now we will prove \eqref{eq:La2}.
For any $T>0$, let $t_0:=T/4$. 
By \eqref{eq:dhke_u} and \eqref{eq:dhke_l}, there is a constant $c_3=c_3(\kappa_0, \kappa_1, \eta,  \phi, R, T, \Lambda, \gamma,  d)\ge 1$ such that for $(u, v)\in D\times D$, 
\begin{align}
	p_D(t_0, u, v)\le&  \ c_3  [\phi^{-1}(t_0)]^{-d}t_0^{-1}\left(1\wedge \sqrt{\phi(\delta_D(u))}\right)\left(1\wedge \sqrt{\phi(\delta_D(v))}\right)\,,\label{eq:l4}\\
	p_D(t_0, u, v)\ge& \ c_3^{-1}  \left([\phi^{-1}(t_0)]\wedge \frac{t_0}{d_0\phi(d_0)}\right)^d \left(1\wedge \sqrt{\phi(\delta_D(u))}\right)\left(1\wedge \sqrt{\phi(\delta_D(v))}\right).\label{eq:l5}
\end{align}
By the semigroup property with \eqref{eq:l6} and \eqref{eq:l4},
for all $(t, x, y)\in (T, \infty)\times D\times D$,
\begin{align}\label{eq:l7}
	p_D(t, x, y)=& \int_{D\times D} p_D(t_0, x, u)p_D(t-2t_0, u, v)p_D(t_0, v, y) dudv\nn\\
	\le & \ c_3^2 [\phi^{-1}(t_0)]^{-2d}t_0^{-2} \sqrt{\phi(\delta_D(x))}\sqrt{\phi(\delta_D(y))}\nn\\
	&\qquad\cdot\int_{D\times D} \left(1\wedge \sqrt{\phi(\delta_D(u))}\right) p_D(t-2t_0, u, v) \left(1\wedge \sqrt{\phi(\delta_D(v))}\right)dudv\nn\\
	\le &  (c_0c_3)^2 e^{-(t-2t_0)\lambda^D}[\phi^{-1}(t_0)]^{-2d}t_0^{-2} \sqrt{\phi(\delta_D(x))}\sqrt{\phi(\delta_D(y))}
\end{align}
Similarly,  using \eqref{eq:l6} and \eqref{eq:l5},
for all $(t, x, y)\in (T, \infty)\times D\times D$, since $D$ is bounded,
\begin{align}\label{eq:l8}
	p_D(t, x, y)
	\ge & \ c_3^{-2}  \left([\phi^{-1}(t_0)]\wedge \frac{t_0}{d_0\phi(d_0)}\right)^{2d} \sqrt{\phi(\delta_D(x))}\sqrt{\phi(\delta_D(y))}\nn\\
	&\qquad\cdot \int_{D\times D} \left(1\wedge \sqrt{\phi(\delta_D(u))}\right) p_D(t-2t_0, u, v) \left(1\wedge \sqrt{\phi(\delta_D(v))}\right)dudv\nn\\
	\ge & \ (c_0c_3)^2 e^{-(t-2t_0)\lambda^D}\left([\phi^{-1}(t_0)]\wedge \frac{t_0}{d_0\phi(d_0)}\right)^{2d}\sqrt{\phi(\delta_D(x))}\sqrt{\phi(\delta_D(y))}.
\end{align}
Therefore, we conclude \eqref{eq:La2} by \eqref{eq:l7}  with \eqref{eq:lL} and \eqref{eq:l8}.
\hfill 
\rule{0.5em}{0.5em}

\section{Green function estimates}\label{sec:green}

We first obtain the weak scaling condition for the inverse function $\phi^{-1}$ by {\bf (WS)} that 
\begin{align}\label{e:invP}
	\left(\frac{t}{s\uC}\right)^{1/\ua}\le \frac{\phi^{-1}(t)}{\phi^{-1}(s)}\le \left(\frac{t}{ s\lC}\right)^{1/\la} \qquad\text{ for $0<s\le t$.}
\end{align}

\medskip

{\it Proof of \autoref{T:green_new}. } 
When $d=1$, we obtain the result by  \cite[Thoerem 1.6]{GKK20}. So we only consider when $d\ge 2$.
Suppose that $x, y\in D$ satisfying that $\min_{i\in \{1, \ldots, d\}}|x^i-y^i|>0$.
Let $r_1:=\min_{i\in \{1, \ldots, d\}}|x^i-y^i|, r_2:=\max_{i\in \{1, \ldots, d\}}|x^i-y^i|$  and 
$T:=\phi(\text{diam}(D))$.
We may assume that $r_1<r_2$, and the Green function is decomposed according to the range of $t>0$  that  
\begin{align*}
	G_D(x, y)&=\int_0^\infty p_D(t, x, y) d t\nn\\
	&=\int_0^{\phi(r_1)}p_D(t, x, y) d t+\int_{\phi(r_1)}^{\phi(r_2)}p_D(t, x, y) d t+\int_{\phi( {r_2})}^{T}p_D(t, x, y) d t+\int_T^\infty p_D(t, x, y) d t\\
	&=:I_0+I_1+ I_2+I_3.
\end{align*}
We first note that \eqref{eq:La2} in \autoref{t:nmain2}(2) implies
\begin{align*}
	I_3=\int_T^\infty p_D(t, x, y) dt \asymp \sqrt{\phi(\delta_D(x))}\sqrt{\phi(\delta_D(y))}.
\end{align*}
We need to prove that there exists $c_0>0$ such that
\begin{align*}
	I_{0}&\asymp
	\left(1\wedge\frac{\sqrt{\phi(\delta_D(x))}}{\sqrt{\phi(r_1)}}\right)\left(1\wedge\frac{\sqrt{\phi(\delta_D(y))}}{\sqrt{\phi(r_1)}}\right) \phi(r_1)^{d+1}\prod_{i=1}^d\frac{1}{|x^i-y^i|\phi(|x^i-y^i|)}\,,\\
	I_1&\le c_0\left(1\wedge\frac{\sqrt{\phi(\delta_D(x))}}{\sqrt{\phi(r_2)}}\right)\left(1\wedge\frac{\sqrt{\phi(\delta_D(y))}}{\sqrt{\phi(r_2)}}\right) \phi(r_2)^{d+1}\prod_{i=1}^d\frac{1}{|x^i-y^i|\phi(|x^i-y^i|)}
	\,, \\
	I_2&  \le c_0 \left(1\wedge\frac{\sqrt{\phi(\delta_D(x))}}{\sqrt{\phi(r_2)}}\right)\left(1\wedge\frac{\sqrt{\phi(\delta_D(y))}}{\sqrt{\phi(r_2)}}\right) \phi(r_2)^{d+1}\prod_{i=1}^d\frac{1}{|x^i-y^i|\phi(|x^i-y^i|)}.
\end{align*}
\medskip

{\bf  Asymptotic estimates   of $I_0$:}
For $0< t<\phi(r_1)\le \phi(|x^i-y^i|)<T, \, i=1,\dots,d$, by letting $v_1:=\frac{\phi(r_1)}{t}\ge 1$, we have that 
\begin{align*}
	\prod_{i=1}^d\left(\phi^{-1}(t)\wedge \frac{t }{|x^i-y^i|\phi(|x^i-y^i|)}\right)d t
	&\asymp \prod_{i=1}^d \frac{t}{|x^i-y^i|\phi(|x^i-y^i|)}d t\\
	&=\prod_{i=1}^d \frac{\phi(r_1)}{v_1|x^i-y^i|\phi(|x^i-y^i|)}\left(-\frac{\phi(r_1)}{v_1^2}\right)d v_1.
\end{align*} 
By change of variables with $v_1=\frac{\phi(r_1)}{t}\ge 1$,
\autoref{t:nmain}, \autoref{t:nmain2}(1) and \eqref{e:invP} imply that there exists $c_1>1$ such that 

\begin{align*}
	&c_1^{-1} \phi(r_1)^{d+1}\prod_{i=1}^d\frac{1}{|x^i-y^i|\phi(|x^i-y^i|)}\left(1\wedge\frac{\sqrt{\phi(\delta_D(x))}}{\sqrt{\phi(r_1)}}\right)\left(1\wedge\frac{\sqrt{\phi(\delta_D(y))}}{\sqrt{\phi(r_1)}}\right)\nn\\
	\le&\ \phi(r_1)^{d+1}\prod_{i=1}^d\frac{1}{|x^i-y^i|\phi(|x^i-y^i|)}\left(1\wedge\frac{\sqrt{\phi(\delta_D(x))}}{\sqrt{\phi(r_1)}}\right)\left(1\wedge\frac{\sqrt{\phi(\delta_D(y))}}{\sqrt{\phi(r_1)}}\right)
	\int_{1}^{\infty} 	v_1^{-d-2}d v_1
	\nn\\
	\le&\ 	I_0 \asymp \int_{1}^{\infty}\left(1\wedge\frac{\sqrt{v_1\phi(\delta_D(x))}}{\sqrt{\phi(r_1)}}\right)\left(1\wedge\frac{\sqrt{v_1\phi(\delta_D(y))}}{\sqrt{\phi(r_1)}}\right)\prod_{i=1}^d \frac{\phi(r_1)}{v_1|x^i-y^i|\phi(|x^i-y^i|)}\left(\frac{\phi(r_1)}{v_1^2}\right)d v_1\nn\\
	\le&\
	\phi(r_1)^{d+1}\prod_{i=1}^d\frac{1}{|x^i-y^i|\phi(|x^i-y^i|)}\left(1\wedge\frac{\sqrt{\phi(\delta_D(x))}}{\sqrt{\phi(r_1)}}\right)\left(1\wedge\frac{\sqrt{\phi(\delta_D(y))}}{\sqrt{\phi(r_1)}}\right)
	\int_{1}^{\infty} 	v_1^{-d-1}d v_1
	\nn\\
	\le&\
	c_1\phi(r_2)^{d+1}\prod_{i=1}^d\frac{1}{|x^i-y^i|\phi(|x^i-y^i|)}\left(1\wedge\frac{\sqrt{\phi(\delta_D(x))}}{\sqrt{\phi(r_2)}}\right)\left(1\wedge\frac{\sqrt{\phi(\delta_D(y))}}{\sqrt{\phi(r_2)}}\right).
\end{align*}	
\medskip

{\bf Upper bound estimate of $I_1$: }
For $\phi(r_1)\le t<\phi(r_2)$, let $1<\frac{\phi(r_2)}{t}=:v_2$.
By the change of variable with $v_2=\frac{\phi(r_2)}{t}$, we have that 
\begin{align*}
	\prod_{i=1}^d\left([\phi^{-1}(t)]^{-1}\wedge \frac{t}{|x^i-y^i|\phi(|x^i-y^i|)}\right)d t
	&\le \prod_{i=1}^d \frac{t}{|x^i-y^i|\phi(|x^i-y^i|)} d t\nn\\
	&\le\prod_{i=1}^d \frac{\phi(r_2)}{v_2|x^i-y^i|\phi(|x^i-y^i|)}\left(-\frac{\phi(r_2)}{v_2^2}\right)d v_2.
\end{align*} 
Then since $v_2>1$, \autoref{t:nmain} implies that
\begin{align*}
	I_1
	&\le  \int^{1}_{\frac{\phi(r_2)}{\phi(r_1)}}\left(1\wedge\frac{\sqrt{v_2\phi(\delta_D(x))}}{\sqrt{\phi(r_2)}}\right)\left(1\wedge\frac{\sqrt{v_2\phi(\delta_D(y))}}{\sqrt{\phi(r_2)}}\right)\prod_{i=1}^d \frac{\phi(r_2)}{v_2|x^i-y^i|\phi(|x^i-y^i|)}\left(-\frac{\phi(r_2)}{v_2^2}\right)d v_2\\
	&\le
	{\phi(r_2)^{d+1}}
	\prod_{i=1}^d\frac{1}{|x^i-y^i|\phi(|x^i-y^i|)} \left(1\wedge\frac{\sqrt{\phi(\delta_D(x))}}{\sqrt{\phi(r_2)}}\right)\left(1\wedge\frac{\sqrt{\phi(\delta_D(y))}}{\sqrt{\phi(r_2)}}\right)\hspace{-.05in}
	\int_{1}^{\frac{\phi(r_2)}{\phi(r_1)}}\hspace{-.05in} v_2^{-d-1}d v_2\\
	&\le  c_2
	\left(1\wedge\frac{\sqrt{\phi(\delta_D(x))}}{\sqrt{\phi(r_2)}}\right)\left(1\wedge\frac{\sqrt{\phi(\delta_D(x))}}{\sqrt{\phi(r_2)}}\right)
	\phi(r_2)^{d+1}
	\prod_{i=1}^d\frac{1}{|x^i-y^i|\phi(|x^i-y^i|)}.
\end{align*}	
\medskip

{\bf Upper bound estimate of $I_2$:}
For $\phi(r_2)\le t<T$,  
\begin{align*}
	\prod_{i=1}^d\left([\phi^{-1}(t)]^{-1}\wedge \frac{t}{|x^i-y^i|\phi(|x^i-y^i|)}\right) dt
	\asymp  [\phi^{-1}(t)]^{-d} dt= \left(\frac{r_2}{r_2\phi^{-1}(t)}\right)^{d} dt.
\end{align*} 
Let $u_2:=\frac{\phi(r_2)}{t}\le  1$.
Then $r_2=\phi^{-1}(u_2t)$.
By the change of variable with $u_2=\frac{\phi(r_2)}{t}$, \eqref{e:invP} and the fact that $r_{i}\le r_{i+1}$  and $\phi(r_{i})\le \phi(r_{i+1})$,  \autoref{t:nmain} implies that
\begin{align*}
	I_k& 	\asymp\int^{\phi(r_2)/T}_1 \left(\frac{\phi^{-1}(u_2t)}{r_2\phi^{-1}(t)}\right)^{d} \left(1\wedge\frac{\sqrt{u_2\phi(\delta_D(x))}}{\sqrt{\phi(r_2)}}\right)\left(1\wedge\frac{\sqrt{u_2\phi(\delta_D(y))}}{\sqrt{\phi(r_2)}}\right)	\left(-\frac{\phi(r_2)}{u_2^2}\right)d u_2\nn\\
	& \le c_3 \frac{\phi(r_2)}{(r_2)^d} \left(1\wedge\frac{\sqrt{\phi(\delta_D(x))}}{\sqrt{\phi(r_2)}}\right)\left(1\wedge\frac{\sqrt{\phi(\delta_D(y))}}{\sqrt{\phi(r_2)}}\right)\int^{1}_{0}
	u_2^{ d/\ua-2}d u_2\nn\\
	& \le c_4 \frac{\phi(r_2)}{(r_2)^d} \left(1\wedge\frac{\sqrt{\phi(\delta_D(x))}}{\sqrt{\phi(r_2)}}\right)\left(1\wedge\frac{\sqrt{\phi(\delta_D(y))}}{\sqrt{\phi(r_2)}}\right)\nn\\
	&\le c_4 \left(1\wedge\frac{\sqrt{\phi(\delta_D(x))}}{\sqrt{\phi(r_2)}}\right)\left(1\wedge\frac{\sqrt{\phi(\delta_D(y))}}{\sqrt{\phi(r_2)}}\right)
	{\phi(r_2)^{d+1}}\prod_{i=1}^d\frac{1}{|x^i-y^i|\phi(|x^i-y^i|)}.
\end{align*}\hfill 
\rule{0.5em}{0.5em}

\medskip

\begin{rem}\label{R:Green_new}
	Suppose that $x^i\ne y^i$ for all $i\in \{1, \ldots, d\}$ and $|x^{i_1}-y^{i_1}|\neq  |x^{i_2}-y^{i_2}|$ for $i_1, i_2\in E\subset \{1, 2,\ldots, d\}$.
	Let $\sigma:\{1, \ldots, d\}\to \{1, \ldots, d\}$ be a permutation to obtain 
	$0<|x^{\sigma(i)}-y^{\sigma(i)}|<|x^{\sigma(j)}-y^{\sigma(j)}|$ and $r_{\sigma(i)}:=|x^{\sigma(i)}-y^{\sigma(i)}|, i\in E$. 
	Using the similar proof as in \autoref{T:green_new}, i.e., decomposing the time variable as $0<\phi(r_{\sigma(i)})<\phi(r_{\sigma(j)})<T<\infty$,
	when $d\ge 2$, we can obtain a more accurate upper bound that
	\begin{align*}
		&\	G_D(x, y)\\
		\le	&\  c \sum_{i\in E}\left(
		\Big(1\wedge\frac{\sqrt{\phi(\delta_D(x))}}{\sqrt{\phi(|x^i-y^i|)}}\Big)\Big(1\wedge\frac{\sqrt{\phi(\delta_D(y))}}{\sqrt{\phi(|x^i-y^i|)}}\Big)	\phi(|x^i-y^i|)^{d+1}\right)\prod_{i=1}^d\frac{1}{|x^{i}-y^{i}|\phi(|x^{i}-y^{i}|)}\\
		\asymp & \ \sum_{i=1}^d\left(
		\Big(1\wedge\frac{\sqrt{\phi(\delta_D(x))}}{\sqrt{\phi(|x^i-y^i|)}}\Big)\Big(1\wedge\frac{\sqrt{\phi(\delta_D(y))}}{\sqrt{\phi(|x^i-y^i|)}}\Big)	\phi(|x^i-y^i|)^{d+1}\right)\prod_{i=1}^d\frac{1}{|x^{i}-y^{i}|\phi(|x^{i}-y^{i}|)}.
	\end{align*}
	
\end{rem}

\bigskip

\textbf{Acknowledgment.} The authors are grateful for helpful discussions with Prof. Eryan Hu during the First China-Japan-Korea Probability Workshop in May 2019.


\bibliographystyle{alpha}
\bibliography{DHKE-anisotropic}

\begin{thebibliography}{BGR14b}

\bibitem[BC06]{BaCh06}
Richard~F. Bass and Zhen-Qing Chen.
\newblock Systems of equations driven by stable processes.
\newblock {\em Probab. Theory Related Fields}, 134(2):175--214, 2006.

\bibitem[BC10]{BaCh10}
Richard~F. Bass and Zhen-Qing Chen.
\newblock Regularity of harmonic functions for a class of singular stable-like
  processes.
\newblock {\em Math. Z.}, 266(3):489--503, 2010.

\bibitem[Ber96]{MR1406564}
Jean Bertoin.
\newblock {\em L\'evy processes}, volume 121 of {\em Cambridge Tracts in
  Mathematics}.
\newblock Cambridge University Press, Cambridge, 1996.

\bibitem[BGR14a]{BGR14}
Krzysztof Bogdan, Tomasz Grzywny, and Micha{\l} Ryznar.
\newblock Density and tails of unimodal convolution semigroups.
\newblock {\em J. Funct. Anal.}, 266(6):3543--3571, 2014.

\bibitem[BGR14b]{BGR14_D}
Krzysztof Bogdan, Tomasz Grzywny, and Micha{\l} Ryznar.
\newblock Dirichlet heat kernel for unimodal {L}\'evy processes.
\newblock {\em Stochastic Process. Appl.}, 124(11):3612--3650, 2014.

\bibitem[BGR15]{BGR15}
Krzysztof Bogdan, Tomasz Grzywny, and Micha{\l} Ryznar.
\newblock Barriers, exit time and survival probability for unimodal {L}\'evy
  processes.
\newblock {\em Probab. Theory Related Fields}, 162(1-2):155--198, 2015.

\bibitem[Cha19]{Cha19}
Jamil Chaker.
\newblock The martingale problem for anisotropic nonlocal operators.
\newblock {\em Math. Nachr.}, 292(11):2316--2337, 2019.

\bibitem[Cha20]{Cha20}
Jamil Chaker.
\newblock Regularity of solutions to anisotropic nonlocal equations.
\newblock {\em Math. Z.}, 292(3--4):2316--2337, 2020.

\bibitem[CHZ]{CHZ}
Zhen-Qing Chen, Eryan Hu, and Guohuan Zhao.
\newblock Dirichlet heat kernel estimates for cylindrical stable processes.
\newblock https://arxiv.org/abs/2304.14026.

\bibitem[CK03]{ChKu03}
Zhen-Qing Chen and Takashi Kumagai.
\newblock Heat kernel estimates for stable-like processes on {$d$}-sets.
\newblock {\em Stochastic Process. Appl.}, 108(1):27--62, 2003.

\bibitem[CK08]{ChKu08}
Zhen-Qing Chen and Takashi Kumagai.
\newblock Heat kernel estimates for jump processes of mixed types on metric
  measure spaces.
\newblock {\em Probab. Theory Related Fields}, 140(1-2):277--317, 2008.

\bibitem[CKK09]{CKK09}
Zhen-Qing Chen, Panki Kim, and Takashi Kumagai.
\newblock On heat kernel estimates and parabolic {H}arnack inequality for jump
  processes on metric measure spaces.
\newblock {\em Acta Math. Sin. (Engl. Ser.)}, 25(7):1067--1086, 2009.

\bibitem[CKK11]{CKK11}
Zhen-Qing Chen, Panki Kim, and Takashi Kumagai.
\newblock Global heat kernel estimates for symmetric jump processes.
\newblock {\em Trans. Amer. Math. Soc.}, 363(9):5021--5055, 2011.

\bibitem[CKK21]{CKK21}
Soobin Cho, Jaehoon Kang, and Panki Kim.
\newblock Estimates of dirichlet heat kernels for unimodal {L}\'evy processes
  with low intensity of small jumps.
\newblock {\em J. Lond. Math. Soc.}, 104(2):823--864, 2021.

\bibitem[CKS12]{CKS12}
Zhen-Qing Chen, Panki Kim, and Renming Song.
\newblock Sharp heat kernel estimates for relativistic stable processes in open
  sets.
\newblock {\em Ann. Probab.}, 40(1):213--244, 2012.

\bibitem[CKS14]{CKS14}
Zhen-Qing Chen, Panki Kim, and Renming Song.
\newblock Dirichlet heat kernel estimates for rotationally symmetric {L}\'evy
  processes.
\newblock {\em Proc. Lond. Math. Soc. (3)}, 109(1):90--120, 2014.

\bibitem[CKS16]{CKS16}
Zhen-Qing Chen, Panki Kim, and Renming Song.
\newblock Dirichlet heat kernel estimates for subordinate {B}rowinan motions
  with gaussian components.
\newblock {\em J. reine angew. Math.}, 711:111--138, 2016.

\bibitem[CT04]{CT04}
Rama Cont and Peter Tankov.
\newblock {\em Financial modelling with jump processes}.
\newblock Chapman \& Hall/CRC Financial Mathematics Series. Chapman \&
  Hall/CRC, Boca Raton, FL, 2004.

\bibitem[CV10]{CV10}
Luis~A. Caffarelli and Alexis Vasseur.
\newblock Drift diffusion equations with fractional diffusion and the
  quasi-geostrophic equation.
\newblock {\em Ann. of Math. (2)}, 171(3):1903--1930, 2010.

\bibitem[CZ95]{MR1329992}
Kai~Lai Chung and Zhong~Xin Zhao.
\newblock {\em From {B}rownian motion to {S}chr\"odinger's equation}, volume
  312 of {\em Grundlehren der Mathematischen Wissenschaften [Fundamental
  Principles of Mathematical Sciences]}.
\newblock Springer-Verlag, Berlin, 1995.

\bibitem[Dav87]{Da87}
E.~B. Davies.
\newblock The equivalence of certain heat kernel and green function bounds.
\newblock {\em J. Funct. Ana}, 71(1):88--103, 1987.

\bibitem[EK86]{MR838085}
Stewart~N. Ethier and Thomas~G. Kurtz.
\newblock {\em Markov processes. Characterization and convergence}.
\newblock Wiley Series in Probability and Mathematical Statistics: Probability
  and Mathematical Statistics. John Wiley \& Sons Inc., New York, 1986.

\bibitem[Fri74]{MR0400406}
Bert Fristedt.
\newblock Sample functions of stochastic processes with stationary, independent
  increments.
\newblock In {\em Advances in probability and related topics, {V}ol. 3}, pages
  241--396. Dekker, New York, 1974.

\bibitem[Fuk72]{MR0341626}
Masatoshi Fukushima.
\newblock On transition probabilities of symmetric strong {M}arkov processes.
\newblock {\em J. Math. Kyoto Univ.}, 12:431--450, 1972.

\bibitem[Fuk80]{MR569058}
Masatoshi Fukushima.
\newblock {\em Dirichlet forms and {M}arkov processes}, volume~23 of {\em
  North-Holland Mathematical Library}.
\newblock North-Holland Publishing Co., Amsterdam-New York; Kodansha, Ltd.,
  Tokyo, 1980.

\bibitem[GKK20]{GKK20}
Tomasz Grzywny, Kyung-Youn Kim, and Panki Kim.
\newblock Estimates of {D}irichlet heat kernel for symmetric markov processes.
\newblock {\em Stochastic Processes and their Applications}, 130(1):431--470,
  2020.

\bibitem[HL12]{HL12}
Fumio Hiroshima and J{\'o}zsef L{\"{o}}rinczi.
\newblock Lieb-{T}hirring bound for {S}chr\"odinger operators with {B}ernstein
  functions of the {L}aplacian.
\newblock {\em Commun. Stoch. Anal.}, 6(4):589--602, 2012.

\bibitem[Hui92]{H92}
Kin~Ming Hui.
\newblock A fatou theorem for the solution of the heat equation at the corner
  points of a cylinder.
\newblock {\em Trans. Amer. Math. Soc.}, 333(2):607--642, 1992.

\bibitem[KK14]{KK}
Kyung-Youn Kim and Panki Kim.
\newblock Two-sided estimates for the transition densities of symmetric markov
  processes dominated by stable-like processes in ${C}^{1,\eta}$ open sets.
\newblock {\em Stochastic Process. Appl.}, 124(9):3055--3083, 2014.

\bibitem[KKK22]{KKK22}
Moritz Kassmann, Kyung-Youn Kim, and Takashi Kumagai.
\newblock Heat kernel bounds for nonlocal operators with singular kernels.
\newblock {\em Journal de Math\'ematiques Pures et Appliqu\'ees}, 130(1):1--26,
  2022.

\bibitem[KR16]{TKMR16}
Tadeusz Kulczycki and Micha{\l} Ryznar.
\newblock Gradient estimates of harmonic functions and transition densities for
  {L}{\'e}vy processes.
\newblock {\em Trans. Amer. Math. Soc.}, 368(1):281--318, 2016.

\bibitem[KR18]{KuRy18}
Tadeusz Kulczycki and Michal Ryznar.
\newblock Transition density estimates for diagonal systems of sdes driven by
  cylindrical $\alpha$-stable processes.
\newblock {\em ALEA, Lat. Am. J. Probab. Math. Stat.}, 15:1335--1375, 2018.

\bibitem[KR20]{KuRy20}
Tadeusz Kulczycki and Michal Ryznar.
\newblock Semigroup properties of solutions of sdes driven by {L}\'{e}vy
  processes with independent coordinates.
\newblock {\em Stochastic Process. Appl.}, 12:7185--7217, 2020.

\bibitem[KRS21]{KRS21}
Tadeusz Kulczycki, Michal Ryznar, and Pawel Sztonyk.
\newblock Strong feller property for sdes driven by multiplicative cylindrical
  stable noise.
\newblock {\em Potential Anal.}, 1:75--126, 2021.

\bibitem[KW22]{KW22}
Kyung-Youn Kim and Lidan Wang.
\newblock Heat kernel bounds for a large class of {M}arkov process with
  singular jump.
\newblock {\em Stochastic Process. Appl.}, 145(1):165--203, 2022.

\bibitem[LS10]{LS10}
Elliott~H. Lieb and Robert Seiringer.
\newblock {\em The stability of matter in quantum mechanics}.
\newblock Cambridge University Press, Cambridge, 2010.

\bibitem[Pru81]{MR632968}
William~E. Pruitt.
\newblock The growth of random walks and {L}\'evy processes.
\newblock {\em Ann. Probab.}, 9(6):948--956, 1981.

\bibitem[Sil80]{MR573292}
Martin~L. Silverstein.
\newblock Classification of coharmonic and coinvariant functions for a {L}\'evy
  process.
\newblock {\em Ann. Probab.}, 8(3):539--575, 1980.

\bibitem[SU07]{MR2365348}
Ren{\'e}~L. Schilling and Toshihiro Uemura.
\newblock On the {F}eller property of {D}irichlet forms generated by pseudo
  differential operators.
\newblock {\em Tohoku Math. J. (2)}, 59(3):401--422, 2007.

\bibitem[Xu13]{Xu13}
Fangjun Xu.
\newblock A class of singular symmetric {M}arkov processes.
\newblock {\em Potential Anal.}, 38(1):207--232, 2013.

\bibitem[Zha02]{Zh02}
Qi~S. Zhang.
\newblock The boundary behavior of heat kernels of {D}irichlet {L}aplacians.
\newblock {\em J. Differential Equations}, 182(2):416--430, 2002.

\end{thebibliography}

\makeatletter
\providecommand
\@dotsep{5}
\makeatother
\relax
	
\end{document}